\numberwithin{equation}{section}
\newtheorem{theorem}{Theorem}[section]
\newtheorem{thmA}{Theorem}
\newtheorem{corollary}[theorem]{Corollary}
\newtheorem{lemma}[theorem]{Lemma}
\newtheorem{remark}{Remark}
\newtheorem{conjecture}[thmA]{Conjecture}
\newcommand{\mathd}{\mathrm{d}}
\newcommand{\mathe}{\mathrm{e}}
\newcommand{\tmop}[1]{\ensuremath{\operatorname{#1}}}
\newenvironment{enumerateroman}{\begin{enumerate}[\textup{(}i\textup{)}] }{\end{enumerate}}
\newcommand{\dt}{\ensuremath{\frac{\partial}{\partial t}}}
\newcommand{\normho}{\ensuremath{| \mathring{h} |}}
\begin{document}

\title[A New Version of Huisken's Convergence Theorem]
{A New Version of Huisken's Convergence Theorem for Mean
Curvature Flow in Spheres}

\author{Li Lei and Hongwei Xu}
\address{Center of Mathematical Sciences \\ Zhejiang University \\ Hangzhou 310027 \\ China\\}
\email{lei-li@zju.edu.cn; xuhw@cms.zju.edu.cn}
\date{}
\keywords{Mean curvature flow, hypersurfaces, convergence theorem,
differentiable sphere theorem, classification theorem.}
\subjclass[2010]{53C44; 53C40; 58J35}
\thanks{Research supported by the National Natural Science Foundation of China, Grant No. 11371315.}

\begin{abstract}
  We prove that if the initial hypersurface of the mean curvature flow in
  spheres satisfies a sharp pinching condition, then the solution of the flow
  converges to a round point or a totally geodesic sphere. Our result improves the
  famous convergence theorem due to Huisken {\cite{MR892052}}. Moreover, we
  prove a convergence theorem under the weakly pinching condition. In
  particular, we obtain a classification theorem for weakly pinched
  hypersurfaces. It should be emphasized that our pinching condition implies that
  the Ricci curvature of the initial hypersurface is positive, but does not
  imply positivity of the sectional curvature.
\end{abstract}

{\maketitle}

\section{Introduction}

Let $F_{0} :M^{n} \rightarrow N^{n+1}$ be an $n$-dimensional immersed
hypersurface in a Riemannian manifold. The mean curvature flow with initial
value $F_{0}$ is a smooth family of immersions $F: M \times [ 0,T )
\rightarrow N^{n+1}$ satisfying
\begin{equation}
  \left\{\begin{array}{l}
    \dt F ( x,t ) = \vec{H} ( x,t ) ,\\
    F ( \cdot ,0 ) =F_{0} ,
  \end{array}\right.
\end{equation}
where $\vec{H} ( x,t )$ is the mean curvature vector of the hypersurface
$M_{t} =F_{t} ( M )$, $F_{t} =F ( \cdot ,t )$.

In 1984, Huisken {\cite{MR772132}} first proved that compact uniformly convex hypersurfaces
in Euclidean space will converge to a round point along the mean curvature
flow. Afterwards, Huisken{\cite{MR892052}} verified the following important
convergence result for mean curvature flow of pinched hypersurfaces in spheres.

\begin{thmA}Let $F_{0} :M^{n} \rightarrow \mathbb{F}^{n+1} (c)$ be an
n-dimensional ($n \geqslant 3$) closed hypersurface immersed in a
spherical space form of sectional curvature $c$. If $F_{0}$
satisfies $| h |^{2} < \frac{1}{n-1} H^{2} +2c$, then the mean
curvature flow with initial value $F_{0}$ converges to a round point
in finite time, or converges to a totally geodesic hypersurface as
$t \rightarrow \infty$.\end{thmA}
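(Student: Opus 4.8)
The plan is to follow the strategy of Huisken's two papers \cite{MR772132, MR892052}, tracking carefully how the ambient curvature $c$ enters, and using that the flow exists on a maximal interval $[0,T)$ and stays singularity-free as long as the curvature is bounded. I would first record the evolution equations in a space form of curvature $c$: $\partial_t g_{ij} = -2Hh_{ij}$, $\partial_t H = \Delta H + (|A|^2 + nc)H$, and the Simons-type identity $\partial_t |A|^2 = \Delta|A|^2 - 2|\nabla A|^2 + (\text{reaction})$, where the reaction is quartic in the second fundamental form plus terms linear in $c$; from these one deduces the evolution of $|\mathring h|^2 = |A|^2 - \tfrac1n H^2$, the quantity measuring deviation from umbilicity.

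\emph{Step 1: the pinching is preserved.} Applying the scalar maximum principle to $g := |A|^2 - \tfrac1{n-1}H^2 - 2c$, one finds $\partial_t g = \Delta g - 2\bigl(|\nabla A|^2 - \tfrac1{n-1}|\nabla H|^2\bigr) + (\text{reaction})$; the gradient terms have the good sign by the refined Kato inequality $|\nabla A|^2 \ge \tfrac{3}{n+2}|\nabla H|^2$, which is exactly strong enough because $\tfrac{3}{n+2} \ge \tfrac1{n-1}$ precisely when $n \ge 3$ --- this is where the dimension hypothesis is used. A short computation using the algebraic content of the pinching shows the reaction is non-positive wherever $g = 0$, so $|A|^2 \le \tfrac1{n-1}H^2 + 2c$ is preserved on $[0,T)$; pointwise this also yields $|\mathring h|^2 \le \tfrac1{n(n-1)}H^2 + 2c$ and control on the spread of the principal curvatures (in particular strict convexity wherever $H > 0$).

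\emph{Step 2: dichotomy on the maximal time.} If $T < \infty$, then since $|A|^2 \le \tfrac1{n-1}H^2 + 2c$, curvature blow-up forces $\max_{M_t} H \to \infty$; using the pinching and the evolution of $H$ as in \cite{MR772132} one obtains the improved pinching $|\mathring h|^2 / H^2 \to 0$ near the singular set and a type-I curvature bound, so that Huisken's parabolic rescaling about a singular point produces --- the $2c$ scaling away --- a complete self-shrinker in $\mathbb{R}^{n+1}$ with $|A|^2 \le \tfrac1{n-1}H^2$, hence strictly convex, hence a round sphere; unwinding the rescaling, $M_t$ contracts to a round point at time $T$. If instead $T = \infty$, note first that $\min_{M_{t_0}} H > 0$ for some $t_0$ would give $\partial_t H_{\min} \ge \tfrac1n H_{\min}^3$ and hence a finite-time singularity, so $\min_{M_t} H \le 0$ for every $t$ (if $H \equiv 0$ the hypersurface is already totally geodesic and stationary, so assume not). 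The core of this case is to run the integral ($L^p$ and Stampacchia-iteration) estimates of \cite{MR892052} on suitable test quantities built from $|\mathring h|^2$, $H$ and $c$, keeping all the $c$-terms, to obtain uniform-in-time curvature bounds (excluding curvature blow-up as $t \to \infty$) and exponential decay $\max_{M_t}|\mathring h|^2 \to 0$. Since $T = \infty$ also precludes the flow resembling a contracting geodesic sphere, one gets $\sup_{M_t}|H| \to 0$, and interpolation with the higher-order estimates upgrades this to smooth exponential convergence of $M_t$ to a totally geodesic hypersurface.

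The step I expect to be the main obstacle is the long-time case: one must choose the test functions and their exponents so that the reaction terms in the integral estimates carry the right sign precisely where $H$ is not small, control the low-curvature region using only compactness of the ambient manifold, and finally upgrade $\max_{M_t}|\mathring h|^2 \to 0$ to genuine convergence of the hypersurfaces --- the linearization at a totally geodesic sphere has a neutral mode coming from the family of totally geodesic spheres, so pinning down the limit needs care. The finite-time case has a parallel difficulty: establishing the type-I bound for merely pinched, not convex, data and ruling out every non-spherical shrinker under the scale-invariant limiting pinching $|A|^2 \le \tfrac1{n-1}H^2$, bearing in mind that strict convexity is recovered only in the blow-up limit and not along the flow. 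This point is genuinely at issue here, since the hypothesis $|h|^2 < \tfrac1{n-1}H^2 + 2c$ permits $H$ to change sign and the sectional curvature to be negative.
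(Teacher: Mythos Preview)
Theorem~A is quoted from Huisken \cite{MR892052}, not reproved here; the paper instead proves the stronger Theorem~\ref{theo1}, whose proof specialises to give Theorem~A and follows Huisken's original scheme. Your Step~1 (preservation of the linear pinching via the Kato inequality $|\nabla h|^2\ge\tfrac{3}{n+2}|\nabla H|^2$ and the reaction computation) is correct and matches both Huisken's argument and the paper's Theorem~\ref{pinch}.

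The genuine gap is your finite-time case. You propose parabolic rescaling and a self-shrinker classification, but neither Huisken nor this paper does that, and your own final paragraph concedes that the required type-I bound and the shrinker rigidity under the limiting pinching are unresolved. The actual route is more elementary and avoids blow-up entirely: the $L^p$/Stampacchia machinery --- which you invoke only for $T=\infty$ --- is in fact the engine of \emph{both} cases, yielding the uniform pointwise decay $|\mathring h|^2 \le C_0(H^2+c)^{1-\sigma}e^{-2\sigma c t}$ (Theorem~\ref{sa0h2}). This is then paired with a gradient estimate $|\nabla H|^2 \le (\eta H)^4 + C(\eta)$ (Theorem~\ref{dH2}); together with the Ricci lower bound and Myers' theorem, these force $|H|_{\min}/|H|_{\max}\to 1$ as $|H|_{\max}\to\infty$, and in particular $\lambda_i \ge \tfrac{H}{n}-|\mathring h|>0$ at some late time $\tau$. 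One then simply applies Huisken's earlier convergence theorem for convex hypersurfaces in Riemannian manifolds \cite{MR837523} from time $\tau$ onward (Theorem~\ref{Tfin}). No rescaling, no monotonicity formula, no shrinker classification.

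For $T=\infty$ your outline is closer: the same decay and gradient estimates, combined with $|H|_{\min}\equiv 0$ and the Myers diameter bound, give $|H|_{\max}\le C e^{-\sigma c t/2}$ and hence $|h|^2\to 0$ exponentially; the higher-derivative bounds from \cite{MR837523} then give smooth convergence (Theorem~\ref{Tinf}). The ``neutral mode'' concern you raise is not an issue --- exponential decay of $|h|$ already forces convergence of the hypersurfaces.
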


For any fixed positive constant $\varepsilon$, Huisken
{\cite{MR892052}} constructed examples to show that the pinching
condition above can not be improved to $| h |^{2} < \frac{1}{n-1}
H^{2} +2c+ \varepsilon$. An attractive problem is: Is it possible to
improve Huisken's pinching condition? During the past nearly three
decades, there has been no progress on this aspect. For more
convergence results of mean curvature flow, we refer the readers to
{\cite{MR2739807,baker2011mean,MR837523,lx2013,lx2014,liu2011extension,MR3078951,liu2012mean}}.

Let $M$ be an $n$-dimensional hypersurface in a space form $\mathbb{F}^{n+1}
(c)$ with constant curvature $c$. Set
\begin{equation}
  \alpha ( n,H,c ) := n c+ \frac{n}{2 ( n-1 )} H^{2}
  - \frac{n-2}{2 ( n-1 )} \sqrt{H^{4} +4 ( n-1 ) c H^{2}} .
\end{equation}

Based on the pioneering work on closed minimal submanifolds in a
sphere due to Simons {\cite{MR0233295}}, Lawson \cite{MR0238229} and
Chern-do Carmo-Kobayashi {\cite{MR0273546}} proved the famous
rigidity theorem for $n$-dimensional closed minimal submanifolds in
$\mathbb{S}^{n+q}$ whose squared norm of the second fundamental form
satisfies $| h |^{2}\leqslant n/(2-1/q)$. After the work due to Okumura
\cite{MR0317246,MR0353216} and Yau \cite{Yau}, the second author
\cite{Xu1} verified the generalized Simons-Lawson-Chern-do
Carmo-Kobayashi theorem for compact submanifolds with parallel mean
curvature in $\mathbb{S}^{n+q}$. In particular, Cheng-Nakagawa
{\cite{MR1050421}} and Xu {\cite{Xu1}} got the following rigidity
theorem for constant mean curvature hypersurfaces, independently.

\begin{thmA}Let $M^{n}$ be a compact hypersurface with constant mean
curvature in $\mathbb{S}^{n+1} \left( 1/ \sqrt{c} \right)$. If $| h
|^{2} \leqslant \alpha ( n,H,c )$, then either $M$ is the totally
umbilical sphere $\mathbb{S}^{n} \left( n/ \sqrt{H^{2} +n^{2} c}
\right)$, one of the Clifford torus $\mathbb{S}^{k} \left(
\sqrt{k/(n c)} \right) \times
    \mathbb{S}^{n-k} \left( \sqrt{(n-k)/(n c)} \right)$, $1\leqslant k\leqslant n-1$, or
the isoparametric hypersurface $\mathbb{S}^{n-1} \left( 1/ \sqrt{c+
\lambda^{2}} \right) \times \mathbb{S}^{1} \left( \lambda /
\sqrt{c^{2} +c \lambda^{2}} \right)$, where
$\lambda = \frac{| H | + \sqrt{H^{2} +4 ( n-1 )c}}{2 ( n-1 )}$.
\end{thmA}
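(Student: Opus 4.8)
The plan is to run an integral Simons-type argument on the traceless second fundamental form and then read off the classification from the equality case. Write $h$ for the second fundamental form, $H=\operatorname{tr}h$ (constant by hypothesis), $\mathring{h}=h-\tfrac{H}{n}g$ and $\Phi=|\mathring{h}|^{2}=|h|^{2}-\tfrac{H^{2}}{n}$. First I would record the Bochner--Simons identity for a constant-mean-curvature hypersurface in $\mathbb{F}^{n+1}(c)$; since $\nabla H\equiv 0$ the Hessian-of-$H$ term drops out, and after rewriting everything through the traceless part it takes the form
\[
\tfrac12\Delta\Phi=|\nabla\mathring{h}|^{2}+H\operatorname{tr}(\mathring{h}^{3})+\Phi\Bigl(nc+\tfrac{H^{2}}{n}-\Phi\Bigr).
\]
Next I would invoke Okumura's algebraic inequality $|\operatorname{tr}(\mathring{h}^{3})|\le\frac{n-2}{\sqrt{n(n-1)}}\,\Phi^{3/2}$, with equality exactly when $\mathring{h}$ has an eigenvalue of multiplicity $n-1$; substituting yields the pointwise estimate $\tfrac12\Delta\Phi\ge|\nabla\mathring{h}|^{2}+\Phi\,Q(\sqrt{\Phi})$, where $Q(x)=nc+\tfrac{H^{2}}{n}-x^{2}-\frac{n-2}{\sqrt{n(n-1)}}|H|\,x$ is a downward parabola.

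The second step is a calibration. Solving $Q(x)=0$ and squaring the nonnegative root, one checks that $Q(\sqrt{\Phi})\ge0$ is equivalent to $\Phi\le\alpha(n,H,c)-\tfrac{H^{2}}{n}$, i.e.\ to the pinching hypothesis $|h|^{2}\le\alpha(n,H,c)$; this is the one genuinely computational point, and it works because the discriminant collapses via $(n-2)^{2}+4(n-1)=n^{2}$. Hence $\Phi\,Q(\sqrt{\Phi})\ge0$ everywhere on $M$, and integrating $\int_{M}\Delta\Phi=0$ over the closed manifold forces $\nabla\mathring{h}\equiv0$, $\Phi\,Q(\sqrt{\Phi})\equiv0$, and — from the step where Okumura's inequality was used — $H\operatorname{tr}(\mathring{h}^{3})\equiv-\frac{n-2}{\sqrt{n(n-1)}}|H|\,\Phi^{3/2}$.

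Finally, the classification. Since $\{\Phi=0\}$ and $\{\Phi=\alpha(n,H,c)-\tfrac{H^{2}}{n}\}$ are disjoint closed sets covering the connected manifold $M$, either $\Phi\equiv0$ or $\Phi\equiv\alpha(n,H,c)-\tfrac{H^{2}}{n}>0$. In the first case $M$ is totally umbilical, hence the geodesic sphere $\mathbb{S}^{n}\bigl(n/\sqrt{H^{2}+n^{2}c}\bigr)$, which is totally geodesic when $H=0$. In the second case $\nabla h=\nabla\mathring{h}+\tfrac1n(\nabla H)g\equiv0$, so $M$ has parallel second fundamental form; by the classification of such hypersurfaces in a space form, $M$ is a standard product $\mathbb{S}^{k}(r_{1})\times\mathbb{S}^{n-k}(r_{2})$ with $r_{1}^{2}+r_{2}^{2}=1/c$. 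If $H=0$ this is forced to be the minimal Clifford torus $\mathbb{S}^{k}\bigl(\sqrt{k/(nc)}\bigr)\times\mathbb{S}^{n-k}\bigl(\sqrt{(n-k)/(nc)}\bigr)$; if $H\neq0$, equality in Okumura's inequality forces one of the two multiplicities to equal $1$, and then the constant-mean-curvature relation together with $|h|^{2}=\alpha(n,H,c)$ determines $r_{1},r_{2}$, producing the isoparametric hypersurface with $\lambda=\frac{|H|+\sqrt{H^{2}+4(n-1)c}}{2(n-1)}$. The step I expect to be the main obstacle is this equality analysis: coupling the correct rigidity statement for parallel second fundamental form with the small algebraic system that pins down the radii, and — in the reverse direction — verifying that each listed model genuinely saturates $|h|^{2}=\alpha(n,H,c)$, which is what makes the pinching sharp.
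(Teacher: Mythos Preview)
The paper does not prove this statement: Theorem~B is quoted from Cheng--Nakagawa \cite{MR1050421} and Xu \cite{Xu1} as a known rigidity result, with no argument supplied. So there is no ``paper's own proof'' to compare against.

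That said, your strategy is exactly the standard one used in those references (going back to Okumura \cite{MR0353216} in the CMC case). The Simons--Bochner identity you wrote is correct for constant $H$; the application of Okumura's cubic inequality is correct; and your algebraic calibration $Q(\sqrt{\Phi})\ge 0 \Longleftrightarrow |h|^{2}\le\alpha(n,H,c)$ checks out, since the discriminant of $Q$ simplifies via $(n-2)^{2}+4(n-1)=n^{2}$ to give
\[
x_{+}^{2}=\frac{n^{2}-2n+2}{2n(n-1)}H^{2}+nc-\frac{n-2}{2(n-1)}|H|\sqrt{H^{2}+4(n-1)c}=\alpha(n,H,c)-\tfrac{H^{2}}{n}.
\]
The integral step and the equality discussion are also standard. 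One small point worth tightening in the equality analysis: you deduce $\nabla\mathring{h}\equiv 0$ from the integrated identity, hence $\Phi$ is constant, which immediately gives the dichotomy $\Phi\equiv 0$ or $\Phi\equiv\alpha-\tfrac{H^{2}}{n}$ without needing the open--closed argument you sketched; and in the $H\neq 0$ branch, the equality case of Okumura's inequality already forces the eigenvalue structure $(\lambda,\ldots,\lambda,\mu)$ before you invoke the parallel-second-fundamental-form classification, so the two radii are determined directly by the pair $\{H=(n-1)\lambda+\mu,\ \lambda\mu=-c\}$. Otherwise the outline is sound.
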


For the refined rigidity results in higher codimensions, we refer
the readers to {\cite{MR1161925,MR1241055}}. Motivated by Theorem B,
the optimal topological sphere theorem due to Shiohama-Xu
\cite{MR1458750} and Andrews' suggestion on the nonlinear parabolic
flow \cite{andrews2002mean}, we have the following conjecture (see
\cite{liu2011extension}).

\begin{conjecture}
  Let $F_{0} :M^{n} \rightarrow \mathbb{S}^{n+1} \left( 1/ \sqrt{c} \right)$
  be an $n$-dimensional closed hypersurface satisfying $| h |^{2} < \alpha (
  n,H,c )$. Then the mean curvature flow with initial
  value $F_{0}$ converges to a round point in finite time or converges to a
  totally geodesic sphere as $t \rightarrow \infty$.
\end{conjecture}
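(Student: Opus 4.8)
The plan is to carry out, for the \emph{non-convex} pinching region $\mathcal C:=\{\,|h|^{2}<\alpha(n,H,c)\,\}$, the three-stage strategy Huisken used for Theorem A: (i) show $\mathcal C$ --- indeed strict pinching --- is preserved by the flow; (ii) show that the pinching \emph{improves}, i.e.\ the traceless second fundamental form becomes negligible relative to $H$ wherever $H$ is large; (iii) split according to whether the maximal existence time $T$ is finite or infinite, obtaining respectively contraction to a round point or convergence to a totally geodesic sphere. We take $n\geq3$ (for $n=2$ one has $\alpha(2,H,c)=H^{2}+2c$ and $\mathcal C$ is the classical region). The essential new difficulty is that $|h|^{2}<\alpha(n,H,c)$ forces only $\operatorname{Ric}>0$ on $M_{t}$, not positive sectional curvature, so $\mathcal C$ is not convex and Hamilton's tensor maximum principle is unavailable; everything must be done with the \emph{scalar} maximum principle applied to $u:=|h|^{2}-\alpha(n,H,c)$, and it is precisely this loss of convexity that forces the square-root term in $\alpha$.

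\emph{Step 1 (preservation).} As in \cite{MR892052}, after discarding the case where $M_{0}$ is minimal --- in which case Theorem B with $H=0$ together with strictness forces $M_{0}$ to be totally geodesic and the conclusion is immediate --- one may assume $H>0$ on $M_{0}$, and $H>0$ is then preserved by the maximum principle applied to $\partial_{t}H=\Delta H+(|h|^{2}+nc)H$. To show $\mathcal C$ is preserved one computes $\partial_{t}u$: at a point with $u=0$ its reaction term is a cubic/quartic expression in the principal curvatures plus ambient-curvature terms, whose sign is governed by the term $H\operatorname{tr}(\mathring h^{3})$, where $\mathring h=h-\tfrac Hn g$ is the traceless part. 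The decisive input is the sharp algebraic inequality
\[
\bigl|\operatorname{tr}(\mathring h^{3})\bigr|\;\leq\;\frac{n-2}{\sqrt{n(n-1)}}\,|\mathring h|^{3},
\]
with equality exactly when $n-1$ of the eigenvalues of $\mathring h$ coincide; $\alpha(n,H,c)$ is precisely calibrated so that, after substituting this and simplifying, the reaction term is $\leq0$ on $\{u=0\}$. (The equality case is realised by the isoparametric hypersurface $\mathbb S^{n-1}\times\mathbb S^{1}$ of Theorem B, which is why no $\alpha+\varepsilon$ works.) The same computation also yields $\partial_{t}u-\Delta u\leq\langle X,\nabla u\rangle+Ku$ for suitable $X$ and $K$, so the strong maximum principle upgrades ``$u\leq0$ preserved'' to ``$u<0$ preserved'': if $u$ vanished at an interior spacetime point it would vanish identically up to that time, contradicting $u\leq-\varepsilon_{0}<0$ at $t=0$ (valid since $M$ is compact and the initial pinching is strict).

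\emph{Steps 2 and 3 (improvement, dichotomy, convergence).} Granting $H>0$ and the preserved strict pinching, one shows, as in \cite{MR892052}, that the pinching improves: there exist $\delta\in(0,1)$ and $C<\infty$ with $|h|^{2}-\tfrac1nH^{2}\leq C\,(H^{2}+n^{2}c)^{1-\delta}$, which in particular makes $M_{t}$ asymptotically totally umbilic wherever $H\to\infty$. The ingredients are: a Poincar\'e-type gradient estimate for $\nabla\mathring h$ coming from the preserved pinching and the Codazzi equations; the evolution of $f_{\sigma}:=(|h|^{2}-\tfrac1nH^{2})(H^{2}+n^{2}c)^{\sigma-1}$, which that estimate turns into $\partial_{t}f_{\sigma}\leq\Delta f_{\sigma}+(\text{favourable gradient terms})-\varepsilon\psi f_{\sigma}$; and $L^{p}$ bounds on $f_{\sigma}$ via the Michael--Simon Sobolev inequality followed by Stampacchia iteration to reach an $L^{\infty}$ bound. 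Now $T$ is finite precisely when $\max_{M_{t}}H\to\infty$; in that case the improved pinching together with the gradient and diameter estimates of \cite{MR892052} shows that a suitable rescaling of the flow converges to a round sphere, i.e.\ $M_{t}$ contracts to a round point. If $T=\infty$, then (as in \cite{MR892052}) the flow converges smoothly, and by linearisation exponentially, to a stationary limit; being a fixed point the limit is minimal, and a further integral estimate using the preserved pinching forces it to be totally umbilic, hence a totally geodesic sphere.

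\emph{Main obstacle.} The crux is Step 1: without convexity of $\mathcal C$ one cannot appeal to Hamilton's maximum principle, and preservation must be extracted from the scalar inequality for $u$, where the sharp cubic estimate for $\operatorname{tr}(\mathring h^{3})$ together with the precise form of $\alpha(n,H,c)$ is \emph{exactly} strong enough --- and no stronger --- to keep the reaction term nonpositive on $\partial\mathcal C$. The delicate point is to organise the computation (in particular the weighting by powers of $H^{2}+n^{2}c$) so that this one inequality suffices simultaneously in the regime $H\to\infty$, where $\partial\mathcal C$ mimics Huisken's cone, and in the regime $H\to0$, where $\partial\mathcal C$ is the minimal case $|h|^{2}=nc$ of Theorem B. The integral estimates of Steps 2--3 are technically heavy but faithful adaptations of \cite{MR892052}. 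Finally, the weakly pinched version $|h|^{2}\leq\alpha(n,H,c)$ follows by the same scheme: either strict pinching is gained for $t>0$ and the above applies, or $u\equiv0$ persists and the equality analysis, via Theorem B, identifies $M_{0}$ as a totally geodesic sphere, a Clifford torus, or the isoparametric hypersurface $\mathbb S^{n-1}\times\mathbb S^{1}$.
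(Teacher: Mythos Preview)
The statement you attempt to prove is a \emph{Conjecture} in the paper and is not proved there; the paper's main result (Theorem~\ref{theo1}) uses the strictly smaller pinching function $\gamma(n,H,c)=\min\{\alpha,\beta\}\le\alpha(n,H,c)$. Your proposal claims to close the full conjecture, but Step~1 has a genuine gap that is precisely the obstruction the paper is built around.

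The evolution of $u=|h|^{2}-\alpha(H^{2})$ carries, besides the zero-order reaction term, the gradient combination
\[
-2|\nabla h|^{2}+2\bigl(2H^{2}\alpha''(H^{2})+\alpha'(H^{2})\bigr)\,|\nabla H|^{2},
\]
and the only tool available to make this nonpositive is the Codazzi--Kato inequality $|\nabla h|^{2}\ge \tfrac{3}{n+2}|\nabla H|^{2}$ (Lemma~\ref{dA2}(i)), which requires $2x\alpha''(x)+\alpha'(x)\le \tfrac{3}{n+2}$. In the proof of Lemma~\ref{app}(i) it is computed that $\varphi_{1}(x):=2x\alpha''(x)+\alpha'(x)$ is strictly decreasing on $(0,\infty)$ with $\varphi_{1}(x_{0})=\tfrac{3}{n+2}$ and $\varphi_{1}(0^{+})=\tfrac{n}{2(n-1)}>\tfrac{3}{n+2}$; hence the required inequality \emph{fails} for every $H^{2}<x_{0}$. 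Your assertion that ``the same computation also yields $\partial_{t}u-\Delta u\le\langle X,\nabla u\rangle+Ku$'' is unjustified: this bad gradient combination cannot be rewritten as a first-order term in $\nabla u$, nor does it vanish at a spatial maximum of $u$.

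You are right that $\alpha$ is calibrated so that the \emph{reaction} term vanishes on $\{u=0\}$ --- this is identity~(\ref{alid1}) --- but that is only half of what preservation needs. Your ``Main obstacle'' paragraph therefore identifies the wrong difficulty: the sharp cubic estimate for $\operatorname{tr}(\mathring h^{3})$ handles the reaction term perfectly; the actual obstruction is the second-order gradient terms for small $H$. The entire purpose of the paper's auxiliary function $\gamma$ is to replace $\alpha$ on $[0,x_{0})$ by its second-order Taylor polynomial $\beta$ at $x_{0}$, engineered so that $2x\gamma''+\gamma'\le\tfrac{3}{n+2}$ holds globally (Lemma~\ref{app}(i)) while the reaction inequality (Lemma~\ref{app}(ii)) and $\gamma\le\alpha$ are maintained. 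Your Steps~2--3 are a fair outline of Sections~3--5, but they presuppose Step~1; as things stand, Conjecture~C remains open.
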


In particular, noting that $\min_{H}   \alpha ( n,H,1 ) =2
\sqrt{n-1}$, we have the following.

\begin{conjecture}
  Let $F_{0} :M^{n} \rightarrow \mathbb{S}^{n+1} ( 1 )$ be an $n$-dimensional
  closed hypersurface satisfying $| h |^{2} <2 \sqrt{n-1}$. Then
  the mean curvature flow with initial value $F_{0}$ converges to a
  round point or a totally geodesic sphere.
\end{conjecture}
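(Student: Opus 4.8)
The plan is to prove the stronger pointwise statement (Conjecture~1) in the case $c=1$, from which Conjecture~2 is immediate: the one-variable optimization $\min_{H\ge 0}\alpha(n,H,1)=2\sqrt{n-1}$ shows that $|h|^2<2\sqrt{n-1}$ forces $|h|^2<\alpha(n,H,1)$ everywhere, and the general $c$ follows by rescaling since $\alpha(n,H,c)$ scales like $|h|^2$ and $H^2$ under homotheties of the ambient space form. So the substantive task is: if $F_0:M^n\to\mathbb S^{n+1}(1)$, $n\ge 3$, is closed with $|h|^2<\alpha(n,H,1)$ pointwise, then the mean curvature flow converges to a round point in finite time or to a totally geodesic $\mathbb S^n$ as $t\to\infty$. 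I would carry this out by the Huisken-type program, adapted to this sharp pinching cone.

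\emph{Step 1 (preservation of the pinching).} Starting from the standard evolution equations $\partial_t H=\Delta H+(|h|^2+n)H$ and $\partial_t|h|^2=\Delta|h|^2-2|\nabla h|^2+2R$ in a space form, I would apply the maximum principle to the pinching quantity $|h|^2-\alpha(n,H,1)$ (or a suitably normalized variant). Writing $|h|^2=\tfrac1n H^2+|\mathring h|^2$, the hypothesis reads $|\mathring h|^2<\varphi(H)$ with $\varphi(0)=n$ and $\varphi(H)=\tfrac1{n(n-1)}H^2+2+o(1)$ as $H\to\infty$, so (since $n\ge 3$) this cone is strictly wider than Huisken's $|\mathring h|^2<\tfrac1{n(n-1)}H^2+2$ near $H=0$. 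After absorbing the gradient terms (using the Kato-type inequality $|\nabla h|^2\ge\tfrac3{n+2}|\nabla H|^2$ where needed), the reaction contribution contains the cubic $H\,\mathrm{tr}(\mathring h^3)$, which I would bound by the sharp Okumura-type inequality $|\mathrm{tr}(\mathring h^3)|\le\tfrac{n-2}{\sqrt{n(n-1)}}|\mathring h|^3$; optimizing against this inequality is precisely the computation that produces $\alpha(n,H,c)$ in Theorem~B, so the reaction term should have the right sign along $\partial\{|h|^2=\alpha(n,H,1)\}$, giving invariance of $\{|h|^2<\alpha(n,H,1)\}$ under the flow.

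\emph{Steps 2--3 (pinching improvement, dichotomy, limits).} With the pinching available I would prove the Huisken pinching-improvement estimate $|\mathring h|^2\le CH^{2-\delta}$ for some small $\delta>0$ (where $H$ is large), by the maximum principle applied to $(|\mathring h|^2+\text{l.o.t.})/H^{2-\delta}$, reinforced if necessary by a Stampacchia iteration on integral quantities as in \cite{MR892052}, together with the derivative bounds $|\nabla^m h|\le C_m(1+H^{1+m})$. This yields the dichotomy: either the flow exists for all time with $\max_{M_t}|h|^2\to0$, and then $M_t\to\mathbb S^n$ smoothly; or a singularity forms at a finite time $T$, and $M_t$ becomes asymptotically umbilic as $t\to T$. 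In the latter case I would rescale parabolically around the singularity and invoke Huisken's monotonicity formula together with the above estimates to see that the rescaled flows subconverge to a nonflat homothetically shrinking soliton, which by $|\mathring h|^2\le CH^{2-\delta}$ must be a round sphere; a standard continuation argument then upgrades this to smooth convergence of the original flow to a round point.

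The \textbf{main obstacle} will be Step~1. Since $\alpha(n,H,1)$ is \emph{not} convex in $H$ (it decreases from $n$ to $2\sqrt{n-1}$ and then increases), the boundary of the pinching region is curved and need not interact cleanly with the reaction ODE system, so unlike in Huisken's linear cone one will probably have to work with a family of barriers $|\mathring h|^2\le\varphi_\eta(H)$ and pass to the limit $\eta\to0$, being especially careful where $H$ is near $0$, $\varphi$ is largest, and the Okumura estimate is tightest. Finally, for the \emph{weakly pinched} case $|h|^2\le\alpha(n,H,c)$ I would add a strong-maximum-principle/rigidity argument: if strictness ever fails, the strong maximum principle forces $|h|^2\equiv\alpha(n,H,c)$, and the equality analysis (Okumura's inequality together with the Simons-type identity) identifies the hypersurface with one of the isoparametric models of Theorem~B, yielding the advertised classification theorem.
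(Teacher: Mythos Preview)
The statement you are trying to prove is labelled a \emph{Conjecture} in the paper and is \emph{not} proved there; the paper establishes only the weaker Theorem~\ref{theo1} with the modified pinching function $\gamma=\min\{\alpha,\beta\}$, and explicitly leaves the full $\alpha$-pinching (your Conjecture~1, hence also the constant $2\sqrt{n-1}$) open. So you are attempting an open problem, and the gap is precisely at your Step~1.

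Concretely: in the evolution of $U=|h|^2-\alpha(H^2)$ the gradient contribution is
\[
-2|\nabla h|^2+2\bigl(2H^2\alpha''+\alpha'\bigr)|\nabla H|^2,
\]
and absorbing it via $|\nabla h|^2\ge\tfrac{3}{n+2}|\nabla H|^2$ requires $2H^2\alpha''(H^2)+\alpha'(H^2)\le\tfrac{3}{n+2}$. The paper computes (Lemma~\ref{app}(i)) that $\varphi_1(x)=2x\alpha''(x)+\alpha'(x)$ is strictly decreasing with $\varphi_1(0^+)=\tfrac{n}{2(n-1)}>\tfrac{3}{n+2}$ and $\varphi_1(x_0)=\tfrac{3}{n+2}$; hence for all $H^2<x_0$ the gradient term has the \emph{wrong} sign and cannot be absorbed. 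The reaction term is indeed nonpositive along $\{|h|^2=\alpha\}$ (that is the identity (\ref{alid1})), so your Okumura analysis of the zero-order part is correct, but it is the second-order part that fails. This is exactly why the paper replaces $\alpha$ by the quadratic $\beta$ on $[0,x_0]$: $\beta$ is tailored so that $2x\beta''+\beta'\le\tfrac{3}{n+2}$ there, at the cost of a strictly smaller pinching region. Your proposed fix of ``a family of barriers $\varphi_\eta$ and $\eta\to0$'' does not circumvent this, since any barrier lying above $\gamma$ on $[0,x_0)$ will violate the same gradient inequality; producing a genuinely new argument in this regime is the content of the conjecture.
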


In {\cite{lxz2013optimal}}, Li, Xu and Zhao investigated the
conjectures above, and proved the following convergence result for the mean
curvature flow in a sphere.

\begin{thmA}Let $F_{0} :M^{n} \rightarrow \mathbb{S}^{n+1} ( 1 )$ be an
n-dimensional ($n \geqslant 3$) closed hypersurface immersed in the unit
sphere. If there exists a positive constant $\varepsilon < \frac{1}{8}$, such
that $F_{0}$ satisfies
\[ | h |^{2} \leqslant n-4 \varepsilon + \frac{n}{2 ( n-1+ \varepsilon )}
   H^{2} - \frac{n-2}{2 ( n-1+ \varepsilon )} \sqrt{H^{4} +4 ( n-1 ) H^{2}}
   \hspace{1em} \tmop{and} \hspace{1em} H \geqslant
   \frac{n}{\sqrt[4]{\varepsilon}} , \]
then the mean curvature flow with initial value $F_{0}$ converges to a round
point in finite time.\end{thmA}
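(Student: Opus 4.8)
The plan is to run Huisken's program for mean curvature flow in the sphere (Theorem~A), the one genuinely new ingredient being the preservation of the near-optimal \emph{nonlinear} pinching condition. Write $h$, $H$ and $\mathring{h}=h-\tfrac{H}{n}g$ for the second fundamental form, mean curvature and tracefree part of $M_t\subset\mathbb S^{n+1}(1)$, so $|h|^2=|\mathring{h}|^2+\tfrac1n H^2$, and let $\varphi(H)=n-4\varepsilon+\tfrac{n}{2(n-1+\varepsilon)}H^2-\tfrac{n-2}{2(n-1+\varepsilon)}\sqrt{H^4+4(n-1)H^2}$ be the right-hand side of the hypothesis. Along the flow $\partial_t H=\Delta H+(|h|^2+n)H$, and there is a companion reaction--diffusion equation for $h$ with a reaction term quartic in $h$ plus ambient-curvature corrections. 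Since $|h|^2\ge0$, while $H>0$ one has $\partial_t H\ge\Delta H+nH$, so $\min_{M_t}H$ grows at least like $\mathrm e^{nt}$; hence $H$ stays positive (indeed $\ge n\varepsilon^{-1/4}$ for all $t$), the flow cannot converge to a totally geodesic sphere, and it must become singular at a finite time $T$. It therefore suffices to show that $M_t$ becomes totally umbilic, in the appropriately rescaled sense, as $t\uparrow T$.

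The heart of the matter is that $|h|^2\le\varphi(H)$ is preserved. Differentiating $w:=|h|^2-\varphi(H)$ and using Simons' identity gives $\partial_t w=\Delta w+\varphi''(H)|\nabla H|^2-2|\nabla h|^2+\mathcal R$ with $\mathcal R$ of zeroth order. The gradient terms are controlled by Huisken's Kato-type inequality $|\nabla h|^2\ge\tfrac{3}{n+2}|\nabla H|^2$: in the range $H\ge n\varepsilon^{-1/4}$ one has $H^2\gg 4(n-1)$, so $\varphi''(H)$ is close to its limiting value $\tfrac{2}{n-1+\varepsilon}$, which for $n\ge3$ is strictly below $\tfrac{6}{n+2}$, whence $\varphi''(H)|\nabla H|^2-2|\nabla h|^2\le0$. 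The crux is then the purely algebraic claim that $\mathcal R\le0$ wherever $|h|^2=\varphi(H)$. Expressing $\mathcal R$ through the principal curvatures $\lambda_1,\dots,\lambda_n$, constrained by $\sum\lambda_i=H$ and $\sum\lambda_i^2=\varphi(H)$, this reduces to a sharp upper bound for $\operatorname{tr}(h^3)$; the estimate $|\operatorname{tr}(\mathring{h}^3)|\le\tfrac{n-2}{\sqrt{n(n-1)}}|\mathring{h}|^3$ is precisely what singles out $\alpha(n,H,c)$, hence $\varphi$ with $\varepsilon=0$ and $H\to\infty$, as the natural pinching function. The extra terms created by the shift $-4\varepsilon$ and by the use of $n-1+\varepsilon$ in place of $n-1$ are negative and of size of order $\varepsilon H^2$ and $\varepsilon$, and they must dominate certain positive, lower-order-in-$H$ contributions of the ambient curvature; this succeeds exactly when $H^2\ge n^2\varepsilon^{-1/2}$, while $\varepsilon<\tfrac18$ keeps all the relevant constants with the correct sign. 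Balancing this cubic eigenvalue inequality against the $\varepsilon$- and $H$-slack is the main obstacle.

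Granted the pinching, $|\mathring{h}|^2\le\varphi(H)-\tfrac1n H^2<\tfrac{1}{n(n-1)}H^2+n$, which keeps the Ricci curvature of $M_t$ positive (though not the sectional curvature). I would next prove Huisken's gradient estimate --- for every $\eta>0$ there is $C_\eta$ with $|\nabla h|^2\le\eta|\mathring{h}|^2H^2+C_\eta$ --- by applying the maximum principle to $|\nabla h|^2-(\eta|\mathring{h}|^2+C_\eta)H^2$ and using the preserved pinching to sign the reaction terms. Then comes the integral estimate: for $f_\sigma:=|\mathring{h}|^2H^{\sigma-2}$ with $\sigma>0$ small and $p$ large, $\tfrac{\mathrm{d}}{\mathrm{d}t}\int_{M_t}f_\sigma^p\,\mathrm d\mu\le-\delta\int_{M_t}f_\sigma^pH^2\,\mathrm d\mu+C$, so $\int_{M_t}f_\sigma^p\,\mathrm d\mu$ stays bounded; a Stampacchia iteration, built on the gradient estimate and the Michael--Simon Sobolev inequality, upgrades this to $\sup_{M_t}f_\sigma\le C$, i.e. $|\mathring{h}|^2\le CH^{2-\sigma}$, so $|\mathring{h}|^2/H^2\to0$. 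Feeding this back into the evolution of $|\mathring{h}|^2/H^2$ gives exponential decay of $|\mathring{h}|^2/H^2$, and the standard interior estimates yield the corresponding decay for the higher derivatives of $h$. After rescaling the flow to keep its maximal curvature of order one, these estimates force the rescaled hypersurfaces to converge smoothly to a totally umbilic limit, i.e. to a round sphere; the usual uniqueness-of-limit and shrinking-to-a-point arguments then promote this to smooth convergence of the original flow to a round point in finite time.
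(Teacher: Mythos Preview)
This statement is quoted in the paper (as Theorem~E, from \cite{lxz2013optimal}) and is not proved there; the paper's own convergence results are established by the same Huisken-type scheme you outline, so I compare your sketch against that machinery.

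Your overall plan is sound, but the pinching-preservation step contains a real misstep. The reaction term $\mathcal R$ in the evolution of $w=|h|^2-\varphi(H)$ does \emph{not} involve $\operatorname{tr}(h^3)$: by Lemma~\ref{evo}(iv) one has $(\partial_t-\Delta)|h|^2=-2|\nabla h|^2+4H^2+2|h|^4-2n|h|^2$ (with $c=1$), so on $\{w=0\}$
\[
\mathcal R \;=\; 4H^2+2\varphi(H)^2-2n\,\varphi(H)-\varphi'(H)\,H\bigl(\varphi(H)+n\bigr),
\]
a function of $H$ alone. Proving $\mathcal R\le0$ is therefore an ODE-type inequality on the pinching function, exactly the analogue of Lemma~\ref{app}(ii); the Okumura inequality $|\operatorname{tr}(\mathring h^{3})|\le\tfrac{n-2}{\sqrt{n(n-1)}}|\mathring h|^{3}$ plays no role at this point (in the paper it enters only later, in Lemma~\ref{lapa0}, for the Simons-type lower bound on $\Delta|\mathring h|^{2}$ feeding the integral estimates). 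Since $\alpha$ satisfies the corresponding relation with equality (identity~(\ref{alid1})), what you must actually verify is that the specific $\varepsilon$-perturbations in $\varphi$ --- the shift $-4\varepsilon$ and the denominator $n-1+\varepsilon$ --- make $\mathcal R$ strictly negative precisely when $H\ge n\varepsilon^{-1/4}$ and $\varepsilon<\tfrac18$. You gesture at this balance, but you misidentify it as a cubic eigenvalue inequality, which it is not; the actual computation is a first-order differential inequality for $\varphi$ and is the genuine content of this step.

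A minor point: from $\partial_t H\ge\Delta H+nH$ you get only exponential growth of $H_{\min}$, not finite-time blow-up. To force $T<\infty$ directly, use $|h|^2\ge\tfrac1n H^2$ to obtain $\partial_t H\ge\Delta H+\tfrac1n H^{3}$, whose comparison ODE blows up in finite time (cf.\ the argument in the proof of Theorem~\ref{Tinf}).
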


The purpose of the present paper is to prove a sharp convergence
theorem for the mean curvature flow of hypersurfaces in spherical space forms, which is a
refined version of the famous convergence theorem due to Huisken
{\cite{MR892052}}.

\begin{theorem}
  \label{theo1}Let $F_{0} :M^{n} \rightarrow \mathbb{F}^{n+1} (
  c)$ be an n-dimensional ($n \geqslant 3$) closed hypersurface
  immersed in a spherical space form. If $F_{0}$ satisfies
  \[ | h |^{2} < \gamma ( n,H,c ) , \]
  then the mean curvature flow with initial value $F_{0}$ has a unique smooth
  solution $F: M \times [ 0,T ) \rightarrow \mathbb{F}^{n+1} (c)$,
    and $F_{t}$ converges to a round point in finite time, or
  converges to a totally geodesic hypersurface as $t \rightarrow \infty$.
    Here $\gamma ( n,H,c )$ is an explicit
  positive scalar defined by
\[ \gamma ( n,H,c ) = \min \{ \alpha ( H^{2} ) , \beta ( H^{2} ) \} , \]
where
\[ \alpha ( x ) =n c+ \frac{n}{2 ( n-1 )} x- \frac{n-2}{2 ( n-1 )} \sqrt{x^{2}
   +4 ( n-1 ) c x} , \]
\[ \beta ( x ) = \alpha ( x_{0} ) + \alpha' ( x_{0} ) ( x-x_{0} ) +
   \frac{1}{2} \alpha'' ( x_{0} ) ( x-x_{0} )^{2} , \]
\[ x_{0} =y_{n} c, \hspace{1em} y_{n} =4 ( 1-n ) + \frac{2 ( n^{2} -4
   )}{\sqrt{2n-5}}   \cos \left( \frac{1}{3} \arctan \tfrac{n^{2} -4n+6}{2 (
   n-1 ) \sqrt{2n-5}} \right) . \]
\end{theorem}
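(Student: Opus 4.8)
The plan is to follow Huisken's original argument for Theorem~A, replacing the pinching cone $|h|^2 < \tfrac{1}{n-1}H^2 + 2c$ by the sharper (nonlinear) region $|h|^2 < \gamma(n,H,c)$, and showing that this region is still preserved by the flow and still forces the traceless second fundamental form $\mathring h$ to decay. The first step is to derive the evolution equations for $|h|^2$, $H^2$, and $|\mathring h|^2 = |h|^2 - \tfrac1n H^2$ on $M_t \subset \mathbb F^{n+1}(c)$; these are standard and follow from Simons-type identities, so I would quote them. The crucial point is a \emph{pinching-is-preserved} claim: if $M_0$ satisfies $|h|^2 < \gamma(n,H,c)$, then so does $M_t$ for all $t \in [0,T)$. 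To prove this one shows that the function $\gamma$ defines a \emph{convex} region in the $(H^2, |h|^2)$-plane that is invariant under the flow's reaction terms. Here the specific shape of $\gamma$ matters: $\alpha(x)$ is exactly the Cheng--Nakagawa/Xu rigidity bound from Theorem~B (so that equality can only occur at the classified isoparametric examples, all of which are fixed or homothetically shrinking), while $\beta(x)$ is the degree-two Taylor polynomial of $\alpha$ at the distinguished point $x_0 = y_n c$; taking the min with $\beta$ convexifies the region past the inflection of $\alpha$, which is precisely what is needed to run a maximum-principle/Stampacchia argument on the pinching quantity. The constant $y_n$ (the real root of the relevant cubic coming from $\alpha'' = 0$ conditions) is chosen so that $\beta$ is tangent to $\alpha$ to second order and lies below it on the relevant range, keeping the region both convex and sharp.

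The second step is the gradient estimate. Following Huisken, I would introduce a quantity of the form $f_\sigma = |\mathring h|^2 / (a H^2 + b)^{1-\sigma}$ (or the natural analogue adapted to $\gamma$) and show, using the preserved strict pinching from Step~1 together with the Poincar\'e--Kato-type inequality $|\nabla h|^2 \geq \tfrac{3}{n+2}|\nabla H|^2$, that $\int_{M_t} f_\sigma\, d\mu$ satisfies a differential inequality forcing it to stay bounded, and in fact that $|\mathring h|^2 \to 0$ in $L^p$ and then (via Myers-type diameter bounds and Stampacchia iteration / Moser iteration on the evolution inequality for $f_\sigma$) in $C^0$. This yields that $M_t$ becomes totally umbilic in the limit. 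Combined with the preserved pinching, one then runs the standard dichotomy: either $H^2 \to \infty$ somewhere (and a rescaling argument shows convergence to a round point in finite time, exactly as in Huisken~\cite{MR772132,MR892052}), or $H$ stays bounded, $T = \infty$, and $M_t$ converges to a totally geodesic sphere.

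The third step is to handle short-time existence, uniqueness, and the rescaling/blow-up analysis near a singularity — these are by now routine given Step~1 and Step~2: pinching plus umbilicity control all higher derivatives of $h$ via interpolation (Huisken's derivative estimates), so the rescaled flow subconverges to a shrinking sphere.

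I expect the main obstacle to be Step~1, specifically verifying that $\min\{\alpha,\beta\}$ carves out a region that is genuinely invariant under the full system of reaction terms in the evolution of $(H^2,|h|^2)$. Huisken's linear cone makes the invariance check almost trivial; here one must control the sign of the relevant quadratic form along the curved boundary $|h|^2 = \alpha(H^2)$, which is exactly where the rigidity bound of Theorem~B is \emph{equality}, so the reaction term degenerates and one needs the convex correction $\beta$ — and a careful term-by-term estimate using the algebraic inequalities for $\sum h_{ij}^3$ and $|\nabla h|^2$ — to push strictly inward. Getting the constant $y_n$ (and hence $x_0$) to make $\beta$ simultaneously tangent to $\alpha$, dominated by $\alpha$ on the pinched range, and compatible with the reaction terms is the delicate computational heart of the paper.
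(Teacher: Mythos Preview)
Your overall architecture matches the paper's: preserve pinching, derive a decay estimate for $f_\sigma = |\mathring h|^2/\mathring\gamma^{1-\sigma}$ via Stampacchia iteration, prove a gradient estimate for $H$, then run the Myers/dichotomy endgame. But two of your diagnoses in Step~1 are off, and they hide exactly the technical devices that make the argument work.

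First, your reading of $\beta$ and $y_n$ is wrong. The function $\alpha$ is strictly convex on $(0,\infty)$ (one computes $\alpha''(x)>0$), so there is no ``inflection of $\alpha$'' to convexify. The actual obstruction is in the \emph{gradient} terms, not the reaction terms: in the evolution of $U=|h|^2-\gamma$ the coefficient of $|\nabla H|^2$ is $2(2H^2\gamma''+\gamma')$, and to absorb it using $|\nabla h|^2\ge \tfrac{3}{n+2}|\nabla H|^2$ you need $2x\gamma''(x)+\gamma'(x)\le \tfrac{3}{n+2}$. For $\alpha$ this quantity is monotone decreasing from $\tfrac{n}{2(n-1)}$ at $x=0$ to $\tfrac{1}{n-1}$ at infinity, so it \emph{exceeds} $\tfrac{3}{n+2}$ for small $x$. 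The point $x_0=y_nc$ is precisely where $2x\alpha''+\alpha'=\tfrac{3}{n+2}$ (this is the cubic $y_n$ solves), and replacing $\alpha$ by its second-order Taylor polynomial $\beta$ on $[0,x_0)$ makes $2x\gamma''+\gamma'$ linear and $\le \tfrac{3}{n+2}$ there. The reaction-term identity $(\alpha+nc)x\alpha'=2cx+\alpha^2-nc\alpha$ holds with equality along $\alpha$, so on that branch the reaction terms give you nothing to spare; the correction $\beta$ is for the gradient side.

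Second, and more seriously, you are missing the auxiliary barrier $\omega$. The paper does not preserve $|h|^2<\gamma$ directly; it preserves $|h|^2<\gamma-\varepsilon\,\omega$ for a carefully built $\omega$ satisfying $(\gamma+nc)x\,\omega'/\omega = 2\gamma-x\gamma'-3nc$ on $x\ge x_0$ and $2x_0\omega''(x_0)+\omega'(x_0)>0$. This $\varepsilon\omega$ margin is not cosmetic: it is what produces the strictly negative term $-\tfrac{2\varepsilon}{C_1|\mathring h|^2}|\nabla H|^2$ in the evolution inequality for $f_\sigma$, without which the Stampacchia iteration in your Step~2 does not close (the ``good'' gradient term would only be nonpositive, not strictly negative, and the $2\sigma|h|^2 f_\sigma$ term could not be controlled). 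Your plan as written has no mechanism generating this margin, so the $L^p$ decay of $f_\sigma$ would stall.

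Finally, the paper inserts a separate gradient estimate $|\nabla H|^2 < [(\eta H)^4+C(\eta)^2]e^{-\sigma ct}$ (via an auxiliary combination of $|\nabla H|^2$, $|\mathring h|^2$, $H^2|\mathring h|^2$ and $H^4$) between the $f_\sigma$ estimate and the endgame; you should not expect this to fall out of interpolation alone.
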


\begin{remark}
  Notice that $\gamma ( n,H,c ) > \frac{1}{n-1} H^{2} +2c$ and $\sqrt{8} n^{2}
  >y_{n}$. Furthermore, a computation shows that $\gamma ( n,H,c ) >
\frac{9}{5} \sqrt{n-1} c$. Therefore, Theorem \ref{theo1}
substantially improves Theorem A as well as Theorem E.
\end{remark}

Put $\alpha_{1} ( x ) =n+ \frac{n}{2 ( n-1 )} x- \frac{n-2}{2 ( n-1 )} \sqrt{x^{2}
   +4 ( n-1 ) x}$.
As a consequence of Theorem \ref{theo1}, we obtain the following
convergence result.

\begin{theorem}\label{theo3}
  Let $F_{0} :M^{n} \rightarrow \mathbb{F}^{n+1} ( c )$ be an $n$-dimensional
  ($n \geqslant 3$) closed hypersurface immersed in a spherical space form. If $F_{0}$
  satisfies
    \[  | h |^{2} < k_{n}c, \]
  then the mean curvature flow with
  initial value $F_{0}$ has a unique smooth solution $F: M \times [ 0,T )
  \rightarrow \mathbb{F}^{n+1} ( c )$, and $F_{t}$ converges to a round point
  in finite time, or converges to a totally geodesic hypersurface as $t \rightarrow
  \infty$. Here $k_n$ is an explicit
  positive constant defined by \[ k_n=
     \left\{\begin{array}{ll}
     \alpha_1 ( y_n ) - \alpha_1' ( y_n ) y_n + \frac{1}{2}
  \alpha_1'' ( y_n) y_n^{2} , & n=3 ,\\
     \alpha_{1} ( y_{n} ) - \frac{\alpha_{1}' (
      y_{n} )^{2}}{2 \alpha_{1}'' ( y_{n} )} , & n \geqslant 4.
   \end{array}\right.\]
\end{theorem}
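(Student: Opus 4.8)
The plan is to obtain Theorem~\ref{theo3} as a corollary of Theorem~\ref{theo1}: it suffices to prove that the constant pinching $|h|^{2} < k_{n}c$ implies the pointwise pinching $|h|^{2} < \gamma(n,H,c)$, and for this it is enough to show $\gamma(n,H,c) \geqslant k_{n}c$ for every value of $H$, i.e.\ that $k_{n}c = \min_{H}\gamma(n,H,c)$. The first reduction is the scaling $x = cy$: since the spherical space form has $c>0$, one checks directly that $\alpha(cy) = c\,\alpha_{1}(y)$ and, because $x_{0} = y_{n}c$, that $\beta(cy) = c\,\beta_{1}(y)$ where $\beta_{1}(y) := \alpha_{1}(y_{n}) + \alpha_{1}'(y_{n})(y-y_{n}) + \tfrac{1}{2}\alpha_{1}''(y_{n})(y-y_{n})^{2}$. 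Hence $\gamma(n,H,c) = c\,\gamma_{1}(H^{2}/c)$ with $\gamma_{1} := \min\{\alpha_{1},\beta_{1}\}$, and everything reduces to computing $\min_{y\geqslant 0}\gamma_{1}(y)$ and identifying it with $k_{n}$; once that is done, any point with $|h|^{2} < k_{n}c$ satisfies $|h|^{2} < k_{n}c \leqslant c\,\gamma_{1}(H^{2}/c) = \gamma(n,H,c)$, so Theorem~\ref{theo1} applies and gives the convergence dichotomy.

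Next I would record the shape of $\gamma_{1}$. Differentiating, $\alpha_{1}''(y) = \dfrac{2(n-2)(n-1)}{(y^{2}+4(n-1)y)^{3/2}} > 0$ and $\alpha_{1}'''(y) < 0$ for $y>0$, so $\alpha_{1}$ is strictly convex; its minimum is $\alpha_{1}(y^{*}) = 2\sqrt{n-1}$, attained at $y^{*} = n\sqrt{n-1} - 2(n-1)$. Since $\beta_{1}$ is exactly the second-order Taylor polynomial of $\alpha_{1}$ at $y_{n}$, the difference $g := \beta_{1} - \alpha_{1}$ satisfies $g(y_{n}) = g'(y_{n}) = g''(y_{n}) = 0$ while $g''' = -\alpha_{1}''' > 0$ on $(0,\infty)$; therefore $g''$ is increasing with its only zero at $y_{n}$, $g'$ attains its minimum $0$ at $y_{n}$, hence $g' \geqslant 0$ and $g$ is nondecreasing with $g(y_{n}) = 0$. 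Consequently $\beta_{1}\leqslant\alpha_{1}$ on $[0,y_{n}]$ and $\beta_{1}\geqslant\alpha_{1}$ on $[y_{n},\infty)$, so
\[ \gamma_{1}(y) = \begin{cases} \beta_{1}(y), & 0\leqslant y\leqslant y_{n},\\ \alpha_{1}(y), & y\geqslant y_{n}, \end{cases} \]
which is a $C^{2}$ (indeed convex) function: a convex parabola with vertex $y_{v} := y_{n} - \alpha_{1}'(y_{n})/\alpha_{1}''(y_{n})$ on the left, glued at $y_{n}$ to the convex $\alpha_{1}$ on the right.

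It then remains to locate the minimum. On $[y_{n},\infty)$ we have $\gamma_{1}=\alpha_{1}$, which is increasing there once one knows $y^{*}\leqslant y_{n}$, so its minimum over that half-line is $\alpha_{1}(y_{n})=\beta_{1}(y_{n})$; on $[0,y_{n}]$, completing the square gives $\beta_{1}(y_{v}) = \alpha_{1}(y_{n}) - \alpha_{1}'(y_{n})^{2}/(2\alpha_{1}''(y_{n}))$ and $\beta_{1}(0) = \alpha_{1}(y_{n}) - \alpha_{1}'(y_{n})y_{n} + \tfrac{1}{2}\alpha_{1}''(y_{n})y_{n}^{2}$. If $n\geqslant 4$ and $0\leqslant y_{v}\leqslant y_{n}$, then $\min_{[0,y_{n}]}\beta_{1} = \beta_{1}(y_{v})$, which is exactly the displayed value of $k_{n}$; if $n=3$ and $y_{v}\leqslant 0$, then $\beta_{1}$ is increasing on $[0,y_{n}]$ and $\min_{[0,y_{n}]}\beta_{1} = \beta_{1}(0)$, which is the displayed $k_{3}$. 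In either case $\min_{y\geqslant 0}\gamma_{1} = k_{n}$ (since $\alpha_{1}(y_{n}) = \beta_{1}(y_{n}) \geqslant \min_{[0,y_{n}]}\beta_{1}$), and the argument is complete. I expect the genuine work — the main obstacle — to be precisely the placement of $y_{n}$: that $y^{*}\leqslant y_{n}$ for all $n\geqslant 3$, that $y_{n}\alpha_{1}''(y_{n})\geqslant\alpha_{1}'(y_{n})$ (i.e.\ $y_{v}\geqslant 0$) when $n\geqslant 4$, and the reverse inequality (i.e.\ $y_{v}\leqslant 0$) when $n=3$ — which is exactly what forces the case distinction in the statement. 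I would verify these by substituting the closed form of $y_{n}$ (equivalently, using that $y_{n}$ is the appropriate root of the cubic it solves) into the explicit expressions for $\alpha_{1}'(y_{n})$ and $\alpha_{1}''(y_{n})$, reducing them to polynomial inequalities in $n$ that can be checked directly for small $n$ and by asymptotic estimates for large $n$.
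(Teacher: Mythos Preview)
Your approach is correct and is exactly how the paper obtains Theorem~\ref{theo3} as a consequence of Theorem~\ref{theo1}; the ingredients $y^{*}\le y_{n}$ (equivalently $\alpha_{1}'(y_{n})\ge 0$) and, for $n=3$, $\beta_{1}'(0)\approx 0.304>0$ appear in the proofs of Lemma~\ref{app} and the lemma immediately after it. One simplification worth noting: for $n\ge 4$ you need not verify $y_{v}\ge 0$, since $k_{n}$ is by definition the global minimum of the parabola $\beta_{1}$, so $\beta_{1}\ge k_{n}$ holds automatically on $[0,y_{n}]$, and on $[y_{n},\infty)$ one has $\alpha_{1}\ge\alpha_{1}(y_{n})=\beta_{1}(y_{n})\ge k_{n}$ using only $\alpha_{1}'(y_{n})\ge 0$---hence the placement of $y_{v}$ is a genuine issue only in the $n=3$ case.
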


\begin{remark} By a computation, we have
$k_n > \frac{9}{5} \sqrt{n-1}$.
  In particular, if $5 \leqslant n \leqslant 10$, then $k_n>1.999 \sqrt{n-1}$. In fact, $k_{10}=6$.
  This shows that the pinching constant $k_{n}c$ in Theorem \ref{theo3} is
  sharp.
\end{remark}

\begin{corollary}\label{n195}
  Let $F_{0} :M^{n} \rightarrow \mathbb{F}^{n+1} ( c )$ be an $n$-dimensional
  ($n \geqslant 3$) closed hypersurface immersed in a spherical space form.
  If $F_{0}$
  satisfies
  \[ | h |^{2} \leqslant \frac{9}{5} \sqrt{n-1} c, \]
  then the mean curvature flow with
  initial value $F_{0}$ has a unique smooth solution $F: M \times [ 0,T )
  \rightarrow \mathbb{F}^{n+1} ( c )$, and $F_{t}$ converges to a round point
  in finite time, or converges to a totally geodesic hypersurface as $t \rightarrow
  \infty$.
\end{corollary}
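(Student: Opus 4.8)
The plan is to deduce Corollary~\ref{n195} directly from Theorem~\ref{theo3}. That theorem already yields the full convergence dichotomy under the strict pinching $|h|^{2}<k_{n}c$, so it is enough to verify that the hypothesis $|h|^{2}\le\tfrac{9}{5}\sqrt{n-1}\,c$ forces $|h|^{2}<k_{n}c$; this in turn follows from the strict numerical inequality
\[ \tfrac{9}{5}\sqrt{n-1}<k_{n}\qquad\text{for every integer }n\ge 3 , \]
which is precisely the bound recorded in the Remark after Theorem~\ref{theo3}. Granting it, one has $|h|^{2}\le\tfrac{9}{5}\sqrt{n-1}\,c<k_{n}c$, and Theorem~\ref{theo3} applies word for word, giving the unique smooth solution on $[0,T)$ together with convergence to a round point in finite time or to a totally geodesic hypersurface as $t\to\infty$. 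Thus the whole content of the corollary is the elementary inequality $k_{n}>\tfrac{9}{5}\sqrt{n-1}$, which I would devote the proof to establishing.

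To this end I would first make $k_{n}$ explicit. With $p=n-1$ and $a=4p$ one has $\alpha_{1}(x)=n+\tfrac{n}{2p}x-\tfrac{n-2}{2p}\sqrt{x^{2}+ax}$, hence $\alpha_{1}'(x)=\tfrac{n}{2p}-\tfrac{n-2}{2p}\cdot\tfrac{2x+a}{2\sqrt{x^{2}+ax}}$ and $\alpha_{1}''(x)=\tfrac{(n-2)a^{2}}{8p\,(x^{2}+ax)^{3/2}}>0$, so $\alpha_{1}$ is strictly convex; consequently $k_{n}$ is the value at $x=0$ of the osculating parabola of $\alpha_{1}$ at $y_{n}$ when $n=3$ (its vertex lying at a negative abscissa in that case) and the minimum of that parabola when $n\ge 4$. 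The constant $y_{n}$ is Cardano's trigonometric expression for the root of an explicit cubic; rather than manipulating the $\cos\bigl(\tfrac13\arctan(\cdot)\bigr)$ directly, I would use that cubic to obtain an algebraic value for $\sqrt{y_{n}^{2}+ay_{n}}$ and thence closed forms for $\alpha_{1}(y_{n})$, $\alpha_{1}'(y_{n})$, $\alpha_{1}''(y_{n})$. Substituting these into the two cases of the definition of $k_{n}$ reduces the claim to a single inequality in the variable $n$.

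I would then settle this inequality by treating the base case $n=3$, the tail $n\to\infty$, and finitely many intermediate values. For $n=3$ one has $y_{3}=-8+10\cos\bigl(\tfrac13\arctan\tfrac34\bigr)\approx 1.77$, and a direct evaluation gives $k_{3}\approx 2.66>2.55\approx\tfrac{9}{5}\sqrt2$. For the tail one uses $y_{n}\sim\tfrac{\sqrt6}{2}\,n^{3/2}$ (so that $a=4(n-1)$ is of lower order), expanding $\sqrt{y_{n}^{2}+ay_{n}}$ to get $\alpha_{1}(y_{n})=\tfrac{y_{n}}{n-1}+\tfrac{(n-2)(n-1)}{y_{n}}+O(1)$ and likewise the correction term, whence $k_{n}=\tfrac{13}{16}\sqrt6\,\sqrt n+O(1)$ and $k_{n}/\sqrt{n-1}\to\tfrac{13}{16}\sqrt6\approx 1.99>\tfrac95$; making the $O(1)$ effective produces an explicit $N_{0}$ past which the inequality holds, and the finitely many cases $3\le n<N_{0}$ are checked individually. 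The main obstacle is exactly this verification: $k_{n}/\sqrt{n-1}$ comes closest to $\tfrac95$ at $n=3$, where the margin is only about $0.08$, so the estimates in the base and intermediate cases must be made sharp, and the trigonometric expression for $y_{n}$ must be controlled by rigorous interval bounds (or via the underlying cubic) rather than by loose numerical approximation.
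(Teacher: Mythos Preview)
Your reduction is exactly the paper's: Corollary~\ref{n195} is deduced from Theorem~\ref{theo3} (equivalently Theorem~\ref{theo1}) via the strict inequality $k_{n}>\tfrac{9}{5}\sqrt{n-1}$, which the paper records in the Remark and proves in the unnamed lemma immediately following Lemma~\ref{app} (stated there as $\gamma(x)>\tfrac{9}{5}\sqrt{n-1}\,c$, which is equivalent since $k_{n}c=\min_{x\ge 0}\gamma(x)$). The difference is only in how that numerical inequality is verified. The paper handles $n=3,4,5$ by direct evaluation, and for all $n\ge 6$ in a single stroke: it introduces the comparison point $x_{2}=\sqrt{2(n-1)}\bigl(\sqrt{n-1}-1/\sqrt{2}\bigr)^{2}$, checks $x_{0}<x_{2}$ from the crude bound $y_{n}<4(1-n)+\tfrac{2(n^{2}-4)}{\sqrt{2n-5}}$, and then uses the monotonicity of $\alpha_{1}'$ and $\alpha_{1}''$ to get $\alpha_{1}'(x_{0})<\alpha_{1}'(x_{2})=\tfrac{1}{2n-3}$ and $\alpha_{1}''(x_{0})>\alpha_{1}''(x_{2})=\tfrac{4\sqrt{2}(n-2)}{\sqrt{n-1}(2n-3)^{3}}$; together with $\alpha_{1}(x_{0})\ge 2\sqrt{n-1}$ this yields $k_{n}>2\sqrt{n-1}-\tfrac{(2n-3)\sqrt{n-1}}{8\sqrt{2}(n-2)}>\tfrac{9}{5}\sqrt{n-1}$. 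Your asymptotic route is also valid---your leading term $k_{n}/\sqrt{n}\to\tfrac{13}{16}\sqrt{6}\approx 1.99$ is correct---but making the $O(1)$ effective so as to leave only finitely many cases, and then rigorously bounding $y_{n}$ in each, is considerably more work than the paper's single well-chosen comparison point, which dispatches every $n\ge 6$ at once without any asymptotic analysis.
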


If the ambient space is a sphere, we have the sharp differentiable
sphere theorem.

\begin{theorem}
  \label{theo5}
  Let $M_0$ be an n-dimensional ($n \geqslant 3$) closed hypersurface immersed
  in $\mathbb{S}^{n+1} \left( 1/ \sqrt{c} \right)$ which
	satisfies $| h |^{2} < \gamma ( n,H,c )$, then $M_0$ is diffeomorphic
  to the standard $n$-sphere $\mathbb{S}^{n}$. In particular, if $M_{0}$
  satisfies
  $| h |^{2} < k_{n}c, $
  then $M_0$ is diffeomorphic
  to $\mathbb{S}^{n}$.
\end{theorem}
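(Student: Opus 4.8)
The strategy is to read Theorem \ref{theo5} off from the convergence statement of Theorem \ref{theo1}. Let $M$ denote the underlying smooth manifold of the immersed hypersurface $M_0$. Since $\mathbb{S}^{n+1}(1/\sqrt c)$ is a spherical space form and $M_0$ satisfies $|h|^2 < \gamma(n,H,c)$, Theorem \ref{theo1} produces a mean curvature flow $F : M \times [0,T) \to \mathbb{S}^{n+1}(1/\sqrt c)$ with $F_0 = M_0$; it is a smooth family of immersions of the fixed closed manifold $M$, and exactly one of two alternatives holds, namely that $F_t$ contracts to a round point at the finite maximal time $T$, or that $F_t$ converges smoothly to a totally geodesic hypersurface as $t \to \infty$. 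In both cases the goal is to exhibit a diffeomorphism from $M$ onto the standard sphere $\mathbb{S}^n$; since $M$ is the common domain of all the maps $F_t$, this then gives $M_0 \cong M \cong \mathbb{S}^n$.

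In the finite-time case, the proof of Theorem \ref{theo1} --- following Huisken's rescaling analysis in \cite{MR892052} --- shows that, after dilating about the limit point by the appropriate factor, the rescaled hypersurfaces converge in $C^\infty$, as $t \to T$, to a totally umbilical round sphere. Hence for $t$ close to $T$ the rescaled immersion $\widetilde F_t$ is the normal graph of a $C^\infty$-small function over that round sphere; being a $C^1$-small perturbation of an embedding of a compact manifold, $\widetilde F_t$ is itself an embedding of $M$ onto a hypersurface diffeomorphic to $\mathbb{S}^n$, whence $M \cong \mathbb{S}^n$. In the long-time case, $M_t$ converges smoothly as $t \to \infty$ to a totally geodesic hypersurface, i.e.\ an equatorial $\mathbb{S}^n$ in $\mathbb{S}^{n+1}(1/\sqrt c)$; the same argument shows that for $t$ large $M_t$ is the normal graph of a $C^\infty$-small function over this equator, hence diffeomorphic to $\mathbb{S}^n$.

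For the final assertion it suffices to observe that $\gamma(n,H,c) \geqslant k_n c$ for every value of $H$. Indeed, using the explicit formulas one checks that $\alpha$ and $\beta$ are convex and agree to second order at $x_0 = y_n c$, so that $x \mapsto \min\{\alpha(x),\beta(x)\}$ is convex on $[0,\infty)$; its minimum is attained at $x = 0$ when $n = 3$ and at the vertex of the parabola $\beta$ when $n \geqslant 4$, and in either case equals $k_n c$ --- this is precisely the comparison underlying Theorem \ref{theo3} and Corollary \ref{n195}. Consequently the pointwise bound $|h|^2 < k_n c$ implies $|h|^2 < \gamma(n,H,c)$, and the first part of the theorem applies.

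The one point that genuinely requires care is the passage, in the finite-time case, from the phrase ``converges to a round point'' to an honest diffeomorphism: one needs the convergence to be smooth in the rescaled picture and the limiting sphere to be embedded. This is exactly what the proof of Theorem \ref{theo1} supplies, and once it is in hand the remainder is formal; in particular the hypothesis that $M_0$ is merely immersed, and not necessarily embedded, plays no role, since only the diffeomorphism type of the abstract manifold $M$ enters the argument.
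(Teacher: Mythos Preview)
Your argument is correct and follows exactly the route the paper intends: Theorem~\ref{theo5} is stated in the introduction as an immediate consequence of Theorem~\ref{theo1} (together with Theorem~\ref{theo3} for the $k_n c$ clause), and the paper gives no separate proof. Your write-up simply makes explicit what the paper leaves implicit --- that smooth convergence of the flow to a round sphere (after rescaling) or to an equatorial sphere forces the underlying manifold $M$ to be diffeomorphic to $\mathbb{S}^n$.

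One small correction: the sentence ``$\alpha$ and $\beta$ are convex and agree to second order at $x_0$, so $\min\{\alpha,\beta\}$ is convex'' is not a valid implication in general (the minimum of two convex functions need not be convex). What actually holds here is that $\gamma$ is $C^2$ with $\gamma''>0$ everywhere --- since $\alpha''>0$ and $\beta''=\alpha''(x_0)>0$ --- exactly as noted in the proof of Lemma~\ref{app}(v). With that fix, your identification of $\min_x \gamma(x)$ with $k_n c$ (at $x=0$ for $n=3$, at the vertex of $\beta$ for $n\geqslant 4$) matches the paper's definition of $k_n$ and justifies the final assertion.
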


Furthermore, for compact hypersurfaces in spheres, we have the following
convergence theorem for the mean curvature flow under the weakly pinching
condition.

\begin{theorem}
  \label{theo2}Let $F_{0} :M^{n} \rightarrow \mathbb{S}^{n+1} \left( 1/ \sqrt{c} \right)$ be
an n-dimensional ($n \geqslant 3$) closed hypersurface immersed in a sphere.
If $F_{0}$ satisfies
\[ | h |^{2} \leqslant \gamma ( n,H,c ) , \]
then the mean curvature flow with initial value $F_{0}$ has a unique smooth
solution $F: M \times [ 0,T ) \rightarrow \mathbb{S}^{n+1} \left( 1/ \sqrt{c}
\right)$, and either
  \begin{enumerateroman}
    \item $T$ is finite, and $F_{t}$ converges to a round point as $t
    \rightarrow T$,

    \item $T= \infty$, and $F_{t}$ converges to a totally geodesic sphere as
    $t \rightarrow \infty$, or

    \item $T$ is finite, $M_{t}$ is congruent to $\mathbb{S}^{n-1} (
    r_{1} ( t ) ) \times \mathbb{S}^{1} ( r_{2} ( t ) )$, where $r_{1} ( t
    )^{2} +r_{2} ( t )^{2} =1/c$, $r_{1} ( t )^{2} = \frac{n-1}{n c} ( 1-
    \mathe^{2n c ( t-T )} )$, and $F_{t}$ converges to a great circle as
    $t \rightarrow T$.
  \end{enumerateroman}
\end{theorem}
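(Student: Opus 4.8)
My plan is to combine the strong maximum principle with the rigidity already available from Theorems~A and~B and with the sharp pinching estimate underlying Theorem~\ref{theo1}. Standard parabolic theory gives a unique maximal smooth solution $F:M\times[0,T)\to\mathbb{S}^{n+1}(1/\sqrt c)$, so only the trichotomy is at issue. Set $u:=\gamma(n,H,c)-|h|^{2}=\min\{\alpha(H^{2})-|h|^{2},\ \beta(H^{2})-|h|^{2}\}$, which is $\ge 0$ at $t=0$. The architecture is: if $M_{t_{1}}$ is \emph{strictly} pinched for some $t_{1}\in[0,T)$ --- in particular if $F_{0}$ itself is --- then Theorem~\ref{theo1} applied to the flow restarted at $t_{1}$ yields alternative (i) or (ii); otherwise $\min_{M}u(\cdot,t)=0$ for every $t>0$, a strong maximum principle forces $u\equiv 0$ on $M\times(0,T)$, and then a rigidity analysis produces alternative (iii).

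For the maximum principle I would first show, from the evolution equations for $|h|^{2}$ and $H$ and the computations behind Theorem~\ref{theo1}, that each of $\alpha(H^{2})-|h|^{2}$ and $\beta(H^{2})-|h|^{2}$ is a supersolution of a reaction--diffusion inequality of the shape
\[\dt w\ \ge\ \Delta w+\langle b,\nabla w\rangle+\mathcal{G}-C\,w,\]
where $b$ is a smooth vector field, $C=C(n,c,\sup|h|)$, and $\mathcal{G}\ge 0$ is a gradient-defect term --- a Kato-type quantity that vanishes precisely when the suitably rescaled second fundamental form is parallel. Since $\gamma=\alpha$ on $\{H^{2}\ge x_{0}\}$, $\gamma=\beta$ on $\{H^{2}<x_{0}\}$, and a minimum of barrier supersolutions is again a barrier supersolution, $u$ obeys the same inequality in the viscosity sense; in particular $u\ge 0$ is preserved along the flow and, wherever $u$ attains the value $0$ at a positive time, the strong maximum principle applies. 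I expect this step to be mostly a careful repackaging of the known estimate, not the main difficulty.

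In the rigidity case $u\equiv 0$ on $M\times(0,T)$ we have $|h|^{2}\equiv\gamma(n,H,c)$; inserting $\nabla u\equiv 0$ and $\dt u=\Delta u=0$ into the inequality forces $\mathcal{G}\equiv 0$, hence the rescaled second fundamental form is parallel, and then the Codazzi equation together with $n\ge 3$ upgrades this to $\nabla H\equiv 0$ and $\nabla h\equiv 0$. Thus each $M_{t}$ has parallel second fundamental form, so by the classical classification it is totally umbilical or a product $\mathbb{S}^{k}(\cdot)\times\mathbb{S}^{n-k}(\cdot)$. A totally umbilical slice would force $\gamma(n,H(t),c)\equiv H(t)^{2}/n$ along the flow, impossible since $\gamma$ is not a linear function of $H^{2}$ of slope $1/n$ and $\gamma(n,0,c)>0$; and the products with $2\le k\le n-2$, the minimal Clifford tori, and the configurations $\mathbb{S}^{n-1}(r_{1})\times\mathbb{S}^{1}(r_{2})$ with $r_{1}^{2}\ge\tfrac{n-1}{nc}$ all satisfy $|h|^{2}>\gamma(n,H,c)$ by Theorem~B and a short computation (the two admissible values of $|h|^{2}$ at a fixed $H$ on such a product differ by $\tfrac{n-2}{n-1}\sqrt{H^{4}+4c(n-1)H^{2}}>0$, so the larger one exceeds $\alpha(n,H,c)$). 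Hence $M_{t}\cong\mathbb{S}^{n-1}(r_{1}(t))\times\mathbb{S}^{1}(r_{2}(t))$ with $r_{1}(t)^{2}+r_{2}(t)^{2}=1/c$ and $r_{1}(t)^{2}<\tfrac{n-1}{nc}$. By uniqueness the flow coincides with the rotationally symmetric one, for which $\frac{d}{dt}r_{1}^{2}=2nc\,r_{1}^{2}-2(n-1)$; the solution compatible with $r_{1}^{2}<\tfrac{n-1}{nc}$ is $r_{1}(t)^{2}=\tfrac{n-1}{nc}\bigl(1-\mathe^{2nc(t-T)}\bigr)$ for the $T$ fixed by $r_{1}(0)$, and since $r_{1}(t)\to 0$ as $t\to T$ the maximal time $T$ is finite, $|h|^{2}\to\infty$, and $M_{t}$ converges to $\mathbb{S}^{1}(1/\sqrt c)$, a great circle. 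This is alternative (iii).

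The main obstacle is twofold. First, establishing the inequality of the second paragraph with a genuinely nonnegative and geometrically meaningful $\mathcal{G}$, and handling the non-smooth locus $\{H^{2}=x_{0}\}$ where $\gamma=\min\{\alpha,\beta\}$ switches branches (one relies there on the principle that a minimum of supersolutions is a supersolution, so the strong maximum principle still bites). Second, the rigidity itself: verifying that $\mathcal{G}\equiv 0$ really forces the rescaled second fundamental form to be parallel --- this requires the precise Kato-type inequality inherited from the proof of Theorem~\ref{theo1} --- and then running the classification with the three exclusions above. Once $M_{t}$ is identified as the cylinder, the ODE analysis and the identification of the limiting great circle are routine.
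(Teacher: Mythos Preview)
Your overall architecture---strong maximum principle dichotomy, then rigidity, then the cylinder ODE---matches the paper's, and the ODE analysis is the same. But the paper's execution is simpler in two respects, and you are working harder than necessary.

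First, $\gamma$ is $C^{2}$ on $[0,\infty)$ by construction: $\beta$ is the second-order Taylor polynomial of $\alpha$ at $x_{0}$, so the two branches agree to second order there (this is stated just after~(\ref{defgama}) and is the point of the definition). Hence $|h|^{2}-\gamma$ is smooth along the flow, the evolution inequality is classical, and no viscosity or ``minimum of supersolutions'' machinery is needed at $\{H^{2}=x_{0}\}$.

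Second, the paper's rigidity step bypasses your Kato route. Setting $\varepsilon=0$ in the proof of Theorem~\ref{pinch} yields inequality~(\ref{weakpinch}), whose right-hand side splits into a nonpositive gradient block, a block that vanishes when $|h|^{2}=\gamma$, and a nonpositive zero-order block. If $|h|^{2}\equiv\gamma$ all three must vanish. Vanishing of the \emph{zero-order} block is, by the equality case of Lemma~\ref{app}(ii), equivalent to $H^{2}\geqslant x_{0}$; this already forces $\gamma=\alpha$. Vanishing of the gradient block then gives $\nabla H=0$, since for $H^{2}>x_{0}$ the coefficient is strictly negative by Lemma~\ref{app}(i). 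Now $H$ is constant with $|h|^{2}=\alpha(n,H,c)$, and Theorem~B applies directly to single out $\mathbb{S}^{n-1}\times\mathbb{S}^{1}$. Your path through ``$\mathcal G\equiv 0$ forces the rescaled $h$ to be parallel'' is plausible but delicate: the equality case of $|\nabla h|^{2}\geqslant\tfrac{3}{n+2}|\nabla H|^{2}$ does not by itself yield $\nabla h=0$, and you never exploit the zero-order equality, which is precisely what pins down $H^{2}\geqslant x_{0}$ and lets one invoke Theorem~B rather than the classification of hypersurfaces with parallel second fundamental form plus your exclusion list.
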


Theorem \ref{theo2} (iii) shows that our pinching conditions in
Theorem \ref{theo2} is sharp. As a consequence of Theorem
\ref{theo2}, we have the following classification theorem.

\begin{corollary}
  Let $M_0$ be an n-dimensional ($n \geqslant 3$) closed hypersurface immersed
  in $\mathbb{S}^{n+1} \left( 1/ \sqrt{c} \right)$ which satisfies $| h |^{2}
  \leqslant \gamma ( n,H,c )$. Then $M_0$ is either diffeomorphic to the
  standard $n$-sphere $\mathbb{S}^{n}$, or congruent to $\mathbb{S}^{n-1} (
  r_{1} ) \times \mathbb{S}^{1} ( r_{2} )$, where $r_{1}^{2} +r_{2}^{2} =1/c$
  and $r_{1}^{2} < \frac{n-1}{n c}$.
\end{corollary}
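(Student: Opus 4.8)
The plan is to deduce the classification directly from Theorem~\ref{theo2} by running the mean curvature flow with initial value $F_{0}$ and inspecting each of the three alternatives it produces. Since the hypothesis $|h|^{2}\leqslant\gamma(n,H,c)$ is precisely the weak pinching condition of Theorem~\ref{theo2}, that theorem applies and gives a maximal smooth solution $F:M\times[0,T)\to\mathbb{S}^{n+1}(1/\sqrt c)$ which falls into case (i), (ii) or (iii). It therefore suffices to check that the first two cases force $M_{0}$ to be diffeomorphic to $\mathbb{S}^{n}$, while the third exhibits $M_{0}$ as the desired product with $r_{1}^{2}<\frac{n-1}{nc}$.

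Consider first cases (i) and (ii). In case (ii), as $t\to\infty$ the immersions $F_{t}$ converge smoothly to a totally geodesic embedding of $\mathbb{S}^{n}$, so for $t$ large $M_{t}$ is a normal graph of arbitrarily small $C^{1}$-norm over a totally geodesic sphere; this graph map is a diffeomorphism $M\cong\mathbb{S}^{n}$. In case (i) the same reasoning applies after the standard parabolic rescaling about the final time $T$, using that $F_{t}$ converges to a round point, i.e.\ that the rescaled hypersurfaces converge in $C^{\infty}$ to a round sphere. These are exactly the arguments underlying Theorem~\ref{theo5}, so in cases (i)--(ii) one may instead simply invoke the conclusion of Theorem~\ref{theo5}: $M_{0}$ is diffeomorphic to $\mathbb{S}^{n}$.

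Now consider case (iii). Evaluating the description in Theorem~\ref{theo2}(iii) at $t=0$ shows that $M_{0}=M_{t=0}$ is congruent to $\mathbb{S}^{n-1}(r_{1}(0))\times\mathbb{S}^{1}(r_{2}(0))$ with $r_{1}(0)^{2}+r_{2}(0)^{2}=1/c$ and $r_{1}(0)^{2}=\frac{n-1}{nc}\bigl(1-\mathe^{-2ncT}\bigr)$. Since $T$ is finite and positive and $c>0$, we have $0<\mathe^{-2ncT}<1$, hence $0<r_{1}(0)^{2}<\frac{n-1}{nc}$. Setting $r_{1}:=r_{1}(0)$ and $r_{2}:=r_{2}(0)$ yields the second alternative of the corollary.

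The only point that is not purely formal is the passage, in cases (i) and (ii), from convergence of the flow to an honest diffeomorphism of the abstract manifold $M$ with $\mathbb{S}^{n}$; but this is the standard ``graph over the limit'' argument already carried out in the proof of Theorem~\ref{theo5}, so I expect no real obstacle — the corollary is essentially immediate once Theorems~\ref{theo2} and~\ref{theo5} are available.
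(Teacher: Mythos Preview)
Your argument is correct and matches the paper's intent: the corollary is stated there simply as ``a consequence of Theorem~\ref{theo2}'' with no further proof, and your case analysis is exactly how one unpacks that consequence. One small caution: Theorem~\ref{theo5} as stated has the \emph{strict} hypothesis $|h|^{2}<\gamma$, so you cannot literally apply it as a black box to $M_{0}$ under the weak assumption; what you actually use (and correctly describe) is the ``graph over the limit'' argument driven by the convergence in cases (i)--(ii) of Theorem~\ref{theo2}, which is independent of that hypothesis. Alternatively, note from the proof of Theorem~\ref{theo2} that cases (i)--(ii) arise precisely when $|h|^{2}<\gamma$ holds strictly at some $t_{0}>0$, so Theorem~\ref{theo5} applies to $M_{t_{0}}$ and the smooth flow gives $M_{0}\cong M_{t_{0}}\cong\mathbb{S}^{n}$.
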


From the fact that $k_{10}=6$, we get the following optimal
convergence result.
\begin{corollary}
  \label{theo4}Let $F_{0} :M \rightarrow \mathbb{S}^{11} \left( 1/ \sqrt{c} \right)$ be
a $10$-dimensional closed hypersurface which satisfies $  | h |^{2}
\leqslant 6c$. Then the mean curvature flow with initial value
$F_{0}$ has a unique smooth solution $F: M \times [ 0,T )
\rightarrow \mathbb{S}^{11} \left( 1/ \sqrt{c} \right)$, and either
  \begin{enumerateroman}
    \item $T$ is finite, and $F_{t}$ converges to a round point as $t
    \rightarrow T$,

    \item $T= \infty$, and $F_{t}$ converges to a totally geodesic sphere as
    $t \rightarrow \infty$, or

    \item $T$ is finite, $M_{t}$ is congruent to $\mathbb{S}^{9} (
    r_{1} ( t ) ) \times \mathbb{S}^{1} ( r_{2} ( t ) )$, where $r_{1} ( t
    )^{2} +r_{2} ( t )^{2} =1/c$, $r_{1} ( t )^{2} = \frac{9}{10 c} ( 1-\frac{1}{6}
    \mathe^{20 c  t } )$, and $F_{t}$ converges to a great circle as
    $t \rightarrow T$.
  \end{enumerateroman}
In particular, $M_0$ is diffeomorphic to $\mathbb{S}^{10}$, or
congruent to $\mathbb{S}^{9} \left(
  \sqrt{\frac{3}{4c}} \right) \times \mathbb{S}^{1} \left( \sqrt{\frac{1}{4c}} \right)$.
\end{corollary}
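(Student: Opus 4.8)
The plan is to obtain Corollary~\ref{theo4} as the special case $n=10$ of Theorem~\ref{theo2}, supplemented by an elementary computation that identifies the borderline torus in alternative (iii) and pins down the extinction time $T$.

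\textbf{Step 1 (reduction to Theorem~\ref{theo2}).} Since $k_{10}=6$ and $k_n c\le\gamma(n,H,c)$ for every value of $H$ — this is precisely the inequality that makes Theorem~\ref{theo3} a consequence of Theorem~\ref{theo1}, and it follows from the scaling identity $\gamma(n,H,c)=c\,\gamma(n,H/\sqrt c,1)$ together with $\min_x\alpha_1(x)=2\sqrt{n-1}\ge k_n$ and $\min_x\beta_1(x)=k_n$ — the hypothesis $|h|^2\le 6c$ forces $|h|^2\le\gamma(10,H,c)$ pointwise on $M_0$. Hence $F_0$ satisfies the weak pinching hypothesis of Theorem~\ref{theo2} with $n=10$ and ambient $\mathbb{S}^{11}(1/\sqrt c)$, which yields a unique smooth solution on a maximal interval $[0,T)$ and the trichotomy (i)--(iii). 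Alternatives (i) and (ii) are literally the first two alternatives of the corollary, and in these two cases $M_0$ is diffeomorphic to $\mathbb{S}^{10}$ by the classification corollary following Theorem~\ref{theo2} (equivalently: $M_0$ is diffeomorphic along the flow to $M_t$ for $t>0$, which is convex near the extinction time in case (i) and $C^\infty$-close to a totally geodesic sphere for large $t$ in case (ii)).

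\textbf{Step 2 (making alternative (iii) explicit).} Specializing Theorem~\ref{theo2}(iii) to $n=10$, $M_t$ is congruent to $\mathbb{S}^{9}(r_1(t))\times\mathbb{S}^1(r_2(t))$ with $r_1(t)^2+r_2(t)^2=1/c$ and $r_1(t)^2=\tfrac{9}{10c}\bigl(1-\mathe^{20c(t-T)}\bigr)$. For such a torus in $\mathbb{S}^{11}(1/\sqrt c)$ the principal curvatures are $\sqrt c\,r_2/r_1$ (multiplicity $9$) and $-\sqrt c\,r_1/r_2$ (multiplicity $1$), so, writing $u=c\,r_1^2\in(0,1)$,
\[ |h|^2=c\Bigl(9\,\tfrac{r_2^2}{r_1^2}+\tfrac{r_1^2}{r_2^2}\Bigr)=c\,f(u),\qquad f(u)=9\,\tfrac{1-u}{u}+\tfrac{u}{1-u}. \]
Clearing denominators, $f(u)=6$ is equivalent to $(4u-3)^2=0$, while $f(u)\to\infty$ as $u\to 0^+$ or $u\to 1^-$; hence $f\ge 6$ with equality only at $u=\tfrac34$. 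Therefore the constraint $|h|^2\le 6c$ on $M_0$ forces $c\,r_1(0)^2=\tfrac34$, i.e. $M_0$ is congruent to $\mathbb{S}^{9}\bigl(\sqrt{3/(4c)}\bigr)\times\mathbb{S}^1\bigl(\sqrt{1/(4c)}\bigr)$.

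\textbf{Step 3 (identifying $T$ and concluding).} Substituting $r_1(0)^2=\tfrac{3}{4c}$ into $r_1(0)^2=\tfrac{9}{10c}\bigl(1-\mathe^{-20cT}\bigr)$ gives $\mathe^{-20cT}=\tfrac16$, so $T=\tfrac{\ln 6}{20c}$ is finite and $r_1(t)^2=\tfrac{9}{10c}\bigl(1-\tfrac16\mathe^{20ct}\bigr)$ on $[0,T)$; as $t\to T$ we have $r_1(t)\to 0$ and $r_2(t)\to 1/\sqrt c$, so $F_t$ converges to a great circle, which is exactly alternative (iii), with $M_0$ congruent to the stated torus. The whole argument is a direct reduction with no serious obstacle; the only points that require a little care are the identity $k_nc\le\gamma(n,H,c)$ (so that $6c$-pinching really implies the hypothesis of Theorem~\ref{theo2}) and the minimization of $f$, whose minimum value $6c$ is attained at precisely the torus that arises as $M_0$ in (iii) — the mechanism behind the sharpness of the constant $6c$.
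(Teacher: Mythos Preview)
Your proof is correct and follows the same route as the paper, which simply records Corollary~\ref{theo4} as the case $n=10$ of Theorem~\ref{theo2} using the identity $k_{10}=6$ (so that $|h|^2\le 6c\le\gamma(10,H,c)$). You have in fact supplied more detail than the paper does: the explicit minimization $f(u)\ge 6$ with equality at $u=3/4$, and the computation $e^{-20cT}=1/6$ pinning down the constant in $r_1(t)^2$, are left implicit there.
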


\section{Preservation of curvature pinching}

Let $F: M^{n} \times [ 0,T ) \rightarrow \mathbb{F}^{n+1} (c)$ be a
mean curvature flow in a spherical space form. Let $( x^{1} , \cdots
,x^{n} )$ be the local coordinates of an open neighborhood in $M$.
We consider the hypersurface $M_{t}$ at time $t$. In the local
coordinates, the first fundamental form of $M_{t}$ can be written as
symmetric matrix $( g_{i j} )$. Denote by $( g^{i j} )$ the inverse
matrix of $( g_{i j} )$. With the suitable choice of the unit normal
vector field, the second fundamental form of $M_{t}$ can be written
as symmetric matrix $( h_{i j} )$. We adopt the Einstein summation
convention. Let $h_{i}^{j} =g^{j k} h_{i k}$ and $h^{i j} =g^{i k}
g^{j l} h_{k l}$. Then the mean curvature and the squared norm of
the second fundamental form of $M_{t}$ can be written as
$H=h_{i}^{i}$ and $| h |^{2} =h^{i j} h_{i j}$, respectively. Let
$\mathring{h}_{i j} =h_{i j} - \tfrac{H}{n}  g_{i j}$ be the
traceless second fundamental form, whose squared norm satisfies
$\normho^{2} = | h |^{2} - \frac{H^{2}}{n} $.

Similar to Lemma 1.2 of {\cite{MR892052}}, the gradient and Laplacian of the
second fundamental form have the following properties.

\begin{lemma}
  \label{dA2}For any hypersurface of $\mathbb{F}^{n+1} ( c )$, we have
  \begin{enumerateroman}
    \item $| \nabla h |^{2} \geqslant \frac{3}{n+2} | \nabla H |^{2}$,

    \item $\Delta | h |^{2} =2 \langle h, \nabla^{2} H \rangle +2 | \nabla h
    |^{2} +2 W$, where $W=H h_{i}^{j}  h_{j}^{k}  h_{k}^{i} - | h |^{4} +n c
    \normho^{2}$.
  \end{enumerateroman}
\end{lemma}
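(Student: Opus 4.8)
The plan is to follow the argument of Lemma 1.2 of \cite{MR892052}, modified only to accommodate a general constant ambient curvature $c$. Both parts rest on the Codazzi equation, which in a space form says that $\nabla_i h_{jk}$ is totally symmetric in all three indices.

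For (i), I would split the totally symmetric tensor $T_{ijk}:=\nabla_i h_{jk}$ into a ``pure trace'' piece and a totally trace‑free piece. Setting
\[ F_{ijk}=\tfrac{1}{n+2}\bigl( g_{jk}\nabla_i H + g_{ik}\nabla_j H + g_{ij}\nabla_k H \bigr), \]
one checks that $F$ is totally symmetric and that $g^{jk}F_{ijk}=\nabla_i H = g^{jk}T_{ijk}$ (using $H=g^{jk}h_{jk}$ and $\nabla g=0$). Hence $E:=T-F$ is totally symmetric and trace‑free in every pair of indices, so $\langle E,F\rangle=0$ and $|\nabla h|^2=|T|^2=|E|^2+|F|^2\geqslant|F|^2$. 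Expanding the nine terms of $|F|^2$ with $g^{jk}g_{jk}=n$ and $g^{jk}g_{ik}=\delta^j_i$ gives $|F|^2=\tfrac{3(n+2)}{(n+2)^2}|\nabla H|^2=\tfrac{3}{n+2}|\nabla H|^2$, which is exactly (i).

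For (ii), I would first establish the Simons‑type identity
\[ \Delta h_{ij}=\nabla_i\nabla_j H + H h_i^k h_{kj} - |h|^2 h_{ij} + c\,( n h_{ij} - H g_{ij}). \]
To get it, write $\Delta h_{ij}=g^{kl}\nabla_k\nabla_l h_{ij}$, use the Codazzi symmetry to replace $\nabla_l h_{ij}$ by $\nabla_i h_{lj}$, commute $\nabla_k$ and $\nabla_i$ (producing two intrinsic curvature terms), recombine the surviving fourth‑order term into $\nabla_i\nabla_j H$ via Codazzi once more, and finally eliminate the intrinsic curvature through the Gauss equation $R_{ijkl}=c(g_{ik}g_{jl}-g_{il}g_{jk})+h_{ik}h_{jl}-h_{il}h_{jk}$. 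Contracting this identity with $h^{ij}$ and inserting it into $\Delta|h|^2=2h^{ij}\Delta h_{ij}+2|\nabla h|^2$ yields
\[ \Delta|h|^2 = 2\langle h,\nabla^2 H\rangle + 2|\nabla h|^2 + 2\bigl( H h_i^j h_j^k h_k^i - |h|^4 + c(n|h|^2-H^2)\bigr). \]
Rewriting $c(n|h|^2-H^2)=nc\bigl(|h|^2-\tfrac{H^2}{n}\bigr)=nc\normho^2$ gives the stated $W$, and the totally geodesic and Clifford‑torus cases (where both sides vanish) serve as sign checks.

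The only delicate point is the index bookkeeping in the Simons computation — in particular pinning down the coefficients and signs of the ambient‑curvature contributions $c(nh_{ij}-Hg_{ij})$; once those are in hand, the rest is linear algebra.
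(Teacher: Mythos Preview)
Your proposal is correct and is precisely the argument the paper has in mind: the paper does not spell out a proof but simply refers to Lemma~1.2 of \cite{MR892052}, whose proof is exactly the trace/trace-free decomposition for (i) and the Simons--Codazzi--Gauss computation for (ii) that you outline. Your bookkeeping (in particular $|F|^2=\tfrac{3}{n+2}|\nabla H|^2$ and the rewriting $c(n|h|^2-H^2)=nc\,\normho^{2}$) checks out.
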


The evolution equations take the same form as in {\cite{MR892052}}.

\begin{lemma}
  \label{evo}For the mean curvature flow $F: M \times [ 0,T ) \rightarrow
  \mathbb{F}^{n+1} ( c )$, we have
  \begin{enumerateroman}
    \item $\dt g_{i j} =-2 H h_{i j}$,

    \item $\dt h_{i j} = \Delta h_{i j} -2H h_{i k} h^{k}_{j} + | h |^{2} h_{i
    j} +2c H g_{i j} -n c h_{i j}$,

    \item $\dt H= \Delta H+H ( | h |^{2} +n c )$,

    \item $\dt | h |^{2} = \Delta | h |^{2} -2 | \nabla h |^{2} +4 c H^{2} +2
    | h |^{4} -2n c | h |^{2}$,

    \item $\dt \normho^{2} = \Delta \normho^{2} -2 | \nabla h |^{2} +
    \frac{2}{n} | \nabla H |^{2} +2 \normho^{2} ( | h |^{2} -n c )$.
  \end{enumerateroman}
\end{lemma}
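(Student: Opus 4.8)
The proof is local and follows the scheme of Huisken \cite{MR892052}, the only change being that the ambient curvature tensor is that of a space form of constant curvature $c$, namely $\bar R_{ABCD}=c(\bar g_{AC}\bar g_{BD}-\bar g_{AD}\bar g_{BC})$. For (i), I differentiate $g_{ij}=\langle \partial_i F,\partial_j F\rangle$ in $t$, interchange $\partial_t$ with $\partial_i$, and substitute the flow equation $\partial_t F=\vec{H}=-H\nu$ together with the Weingarten relation $\partial_i\nu=h_i^{k}\partial_k F$; this gives $\dt g_{ij}=-2Hh_{ij}$, and hence $\dt g^{ij}=-g^{ik}g^{jl}\dt g_{kl}=2Hh^{ij}$, a formula used repeatedly below.

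For (ii), which is the heart of the computation, I express $h_{ij}$ through the second derivatives of $F$ and the unit normal $\nu$, differentiate in $t$, interchange time and space derivatives, and insert the flow equation. Using the Gauss and Weingarten formulas and the constant-curvature form of $\bar R$, one obtains a first identity of the shape $\dt h_{ij}=\nabla_i\nabla_j H-Hh_{ik}h^{k}_{j}+(\text{lower order terms in }H,h,c)$. The remaining task is to convert $\nabla_i\nabla_j H$ into $\Delta h_{ij}$ via the Simons-type identity
\[ \Delta h_{ij}=\nabla_i\nabla_j H+Hh_i^{k}h_{kj}-|h|^2 h_{ij}+nc\,h_{ij}-cH g_{ij}, \]
which itself follows from commuting covariant derivatives (the Ricci identity), the Codazzi equation $\nabla_k h_{ij}=\nabla_i h_{kj}$ (valid because the ambient space is a space form), and the Gauss equation $R_{ijkl}=c(g_{ik}g_{jl}-g_{il}g_{jk})+h_{ik}h_{jl}-h_{il}h_{jk}$. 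Substituting this into the first identity and collecting terms yields (ii). I expect this step --- keeping careful track of all the curvature contractions and sign conventions --- to be the only genuinely laborious part; everything else is algebraic bookkeeping on top of (i) and (ii).

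Given (i) and (ii), the remaining equations follow by tracing. For (iii), I differentiate $H=g^{ij}h_{ij}$ and use $\dt g^{ij}=2Hh^{ij}$ together with (ii); the cubic terms $2Hh^{ij}h_{ij}$ and $-2Hg^{ij}h_{ik}h^{k}_{j}$ cancel, leaving $\dt H=\Delta H+H(|h|^2+nc)$. For (iv), I write $|h|^2=h^{ij}h_{ij}$ and differentiate, again using $\dt g^{ij}=2Hh^{ij}$ and (ii); the cubic trace terms $Hh_i^{j}h_j^{k}h_k^{i}$ cancel, and then Lemma \ref{dA2}(ii) --- equivalently $\Delta|h|^2=2h^{ij}\Delta h_{ij}+2|\nabla h|^2$ --- lets me replace $2h^{ij}\Delta h_{ij}$ by $\Delta|h|^2-2|\nabla h|^2$, giving (iv). Finally, for (v), I substitute $\normho^2=|h|^2-H^2/n$, combine (iv) with (iii) and the identity $\Delta(H^2/n)=\tfrac{2}{n}H\Delta H+\tfrac{2}{n}|\nabla H|^2$, and simplify; the terms $2|h|^4-\tfrac{2}{n}H^2|h|^2-2nc|h|^2+2cH^2$ collapse to $2\normho^2(|h|^2-nc)$, completing the proof.
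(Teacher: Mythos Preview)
Your proof is correct and follows precisely the standard derivation from Huisken \cite{MR892052}; the paper itself does not give a proof of this lemma but simply cites that reference, so your sketch is exactly the computation being invoked. There is nothing to add.
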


We define
\[ \alpha ( x ) =n c+ \frac{n}{2 ( n-1 )} x- \frac{n-2}{2 ( n-1 )} \sqrt{x^{2}
   +4 ( n-1 ) c x} , \hspace{1em} x \geqslant 0, \]
and
\[ \beta ( x ) = \alpha ( x_{0} ) + \alpha' ( x_{0} ) ( x-x_{0} ) +
   \frac{1}{2} \alpha'' ( x_{0} ) ( x-x_{0} )^{2} , \hspace{1em} x \geqslant
   0, \]
where
\[ x_{0} =y_{n} c, \hspace{1em} y_{n} =4 ( 1-n ) + \frac{2 ( n^{2} -4
   )}{\sqrt{2n-5}}   \cos \left( \frac{1}{3} \arctan \tfrac{n^{2} -4n+6}{2 (
   n-1 ) \sqrt{2n-5}} \right) . \]
Then we set
\begin{equation}\label{defgama}
 \gamma ( x ) = \left\{\begin{array}{ll}
     \alpha ( x ) , & x \geqslant x_{0} ,\\
     \beta ( x ) , & 0 \leqslant x<x_{0} .
   \end{array}\right.
\end{equation}

It is easy to see that $\gamma$ is a $C^{2}$-function on $[ 0,+ \infty )$.
Moreover, we obtain the following.

\begin{lemma}
  $\label{app}$For $n \geqslant 3$, $c>0$ and $x \geqslant 0$, the
  $C^{2}$-function $\gamma ( x )$ satisfies
  \begin{enumerateroman}
    \item $2x \gamma'' ( x ) + \gamma' ( x ) \leqslant \frac{3}{n+2}$,
    and the equality holds if and only if $x=x_{0}$,

    \item $( \gamma ( x ) +n c ) x  \gamma' ( x ) \geqslant 2 c x+  \gamma ( x
    )^{2} -n c  \gamma ( x )$, and the equality holds if and only if $x
    \geqslant x_{0}$,

    \item $\gamma ( x ) >x \gamma' ( x )$,

    \item $\gamma ( x ) = \min \{ \alpha ( x ) , \beta ( x ) \}$,

    \item $\frac{x}{n-1} +2c< \gamma ( x ) < \frac{x}{n-1} +n c$,

    \item $\frac{n-2}{\sqrt{n ( n-1 )}} \sqrt{x \left( \gamma ( x ) -
    \frac{1}{n} x \right)} + \gamma ( x ) \leqslant \frac{2}{n} x+n c$.
  \end{enumerateroman}
\end{lemma}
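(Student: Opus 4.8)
\emph{Overall strategy and preliminaries.}
The plan is to prove (i)--(vi) by splitting $[0,\infty)$ at $x_{0}$ and using the two explicit pieces of $\gamma$: on $[x_{0},\infty)$ one has $\gamma=\alpha$, where the closed form of $\alpha$ yields exact algebraic identities; on $[0,x_{0})$ one has $\gamma=\beta$, the second-order Taylor polynomial of $\alpha$ at $x_{0}$, so every assertion becomes a polynomial inequality in $x$ forced to vanish to high order at $x_{0}$. First I would record, writing $R(x)=\sqrt{x^{2}+4(n-1)cx}$, the formulae
\[
 x\alpha'(x)=\alpha(x)-nc+\frac{(n-2)cx}{R(x)},\qquad
 \alpha''(x)=\frac{2(n-2)(n-1)c^{2}}{R(x)^{3}}>0 ,
\]
from which $\alpha$ is strictly convex with $\alpha''$ strictly decreasing (hence $\alpha'''<0$) on $(0,\infty)$, together with the consequences of $R(x)<x+2(n-1)c$: namely $\alpha'(x)<\tfrac1{n-1}$, $\ \tfrac{x}{n-1}+2c<\alpha(x)<\tfrac{x}{n-1}+nc$, and $\alpha(x)>x\alpha'(x)$ for $x>0$, with $\alpha(0)=nc$.

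\emph{The reduction variable and the constant $y_{n}$.}
For (i) the key observation is that, with $s=\sqrt{x}$,
\[
 2x\gamma''(x)+\gamma'(x)=\tfrac12\,\frac{\mathd^{2}}{\mathd s^{2}}\gamma(s^{2}),\qquad
 \frac{\mathd^{2}}{\mathd s^{2}}\alpha(s^{2})=\frac{n}{n-1}-\frac{n-2}{n-1}\,g(s),
\]
where $g(s)=s\,(s^{2}+6(n-1)c)\,(s^{2}+4(n-1)c)^{-3/2}$. A short computation gives $g'(s)=24(n-1)^{2}c^{2}(s^{2}+4(n-1)c)^{-5/2}>0$, so $g$ increases strictly from $0$ to $1$; hence $\tfrac{\mathd^{2}}{\mathd s^{2}}\alpha(s^{2})$ decreases strictly from $\tfrac{n}{n-1}$ to $\tfrac2{n-1}$, and since $\tfrac2{n-1}<\tfrac6{n+2}<\tfrac n{n-1}$ for $n\ge3$, there is a unique $s_{0}>0$ with $\tfrac12\,\tfrac{\mathd^{2}}{\mathd s^{2}}\alpha(s^{2})=\tfrac3{n+2}$ at $s=s_{0}$, equivalently $g(s_{0})=\tfrac{n^{2}-4n+6}{n^{2}-4}=:q$. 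I would then verify that this $s_{0}$ is exactly the point singled out by $y_{n}$: putting $t=1+s_{0}^{2}/(4(n-1)c)$ turns $g(s_{0})=q$ into the cubic $4(1-q^{2})t^{3}-3t-1=0$, and since $\sqrt{1-q^{2}}=2(n-1)\sqrt{2n-5}/(n^{2}-4)$ and $\arcsin q=\arctan\tfrac{n^{2}-4n+6}{2(n-1)\sqrt{2n-5}}$, the identity $4\cos^{3}\theta-3\cos\theta=\cos3\theta$ identifies its unique root with $t>1$ as $t=\cos(\tfrac13\arcsin q)/\sqrt{1-q^{2}}$; unwinding the substitution gives $s_{0}^{2}=y_{n}c=x_{0}$. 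Thus the explicit $y_{n}$ in the statement is precisely the location of the equality in (i).

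\emph{The six items.}
(i): On $[x_{0},\infty)$, $2x\gamma''+\gamma'=\tfrac n{2(n-1)}-\tfrac{n-2}{2(n-1)}g(s)\le\tfrac3{n+2}$ with equality iff $g(s)=q$ iff $x=x_{0}$; on $[0,x_{0})$, $\beta''\equiv\alpha''(x_{0})$, so $2x\gamma''+\gamma'=3\alpha''(x_{0})x+\alpha'(x_{0})-\alpha''(x_{0})x_{0}$ is affine with positive slope and value $\tfrac3{n+2}$ at $x_{0}$, hence $<\tfrac3{n+2}$ on $[0,x_{0})$.
(ii): The identity $(\alpha+nc)\,x\alpha'=2cx+\alpha^{2}-nc\,\alpha$ holds for all $x\ge0$ (substitute the formula for $x\alpha'$ and use $R^{2}=x^{2}+4(n-1)cx$), giving equality on $[x_{0},\infty)$; on $[0,x_{0})$ the polynomial $G:=(\gamma+nc)x\gamma'-2cx-\gamma^{2}+nc\,\gamma$ (of degree $\le4$) is $C^{1}$ and vanishes on $[x_{0},\infty)$, hence $G=(x-x_{0})^{2}Q$ with $\deg Q\le2$, positive leading coefficient $\tfrac14\alpha''(x_{0})^{2}$, $Q(x_{0})=\tfrac32\alpha''(x_{0})\big(\alpha(x_{0})+(n-2)cx_{0}/R(x_{0})\big)>0$ and $Q(0)=\gamma(0)(nc-\gamma(0))/x_{0}^{2}>0$ (by (v)); an elementary check on the remaining quadratic factor (discriminant, or vertex not in $(0,x_{0})$) then gives $G>0$ on $[0,x_{0})$.
(iii): $\alpha-x\alpha'=nc-(n-2)cx/R>0$ (equivalent to $n^{2}R^{2}>(n-2)^{2}x^{2}$, with value $nc$ at $x=0$), and $\beta(x)-x\beta'(x)=\big(\alpha(x_{0})-x_{0}\alpha'(x_{0})\big)-\tfrac12\alpha''(x_{0})(x^{2}-x_{0}^{2})>0$ for $x<x_{0}$.
(iv): By Taylor's theorem $\alpha(x)-\beta(x)=\tfrac16\alpha'''(\xi)(x-x_{0})^{3}$ on $(0,x_{0}]\cup[x_{0},\infty)$, which has the sign of $-(x-x_{0})^{3}$ since $\alpha'''<0$; at $x=0$ use continuity of $\alpha$ and the fact that $\alpha-\beta$ is strictly decreasing on $[0,x_{0}]$. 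Hence $\gamma=\min\{\alpha,\beta\}$.
(v): Upper bound: $\gamma\le\alpha<\tfrac{x}{n-1}+nc$ for $x>0$ and $\gamma(0)<nc$; lower bound: $\alpha>\tfrac{x}{n-1}+2c$ for $x\ge0$, and on $[0,x_{0})$ the convex quadratic $\beta(x)-\tfrac{x}{n-1}-2c$ has derivative $\alpha'(x_{0})-\tfrac1{n-1}<0$ at $x_{0}$, so it is decreasing on $[0,x_{0}]$ and stays above its value $\alpha(x_{0})-\tfrac{x_{0}}{n-1}-2c>0$ at $x_{0}$.
(vi): The identity $(\tfrac2nx+nc-\alpha)^{2}=\tfrac{(n-2)^{2}}{n(n-1)}x(\alpha-\tfrac1nx)$ holds for all $x\ge0$ (expand with $R^{2}=x^{2}+4(n-1)cx$), so (vi) is an equality on $[x_{0},\infty)$; by (v) one has $\tfrac2nx+nc-\gamma>0$, and on $[0,x_{0})$ the $C^{2}$ function $J:=(\tfrac2nx+nc-\gamma)^{2}-\tfrac{(n-2)^{2}}{n(n-1)}x(\gamma-\tfrac1nx)$ vanishes on $[x_{0},\infty)$, hence $J=(x-x_{0})^{3}L$ with $L$ affine on $[0,x_{0})$, where $L(0)=-(nc-\gamma(0))^{2}/x_{0}^{3}<0$ and $L(x_{0})=\tfrac16J'''(x_{0}^{-})<0$ (indeed $J'''(x_{0}^{+})=0$ and the jump $J'''(x_{0}^{+})-J'''(x_{0}^{-})$ equals $\alpha'''(x_{0})<0$ times $-2(\tfrac2nx_{0}+nc-\gamma(x_{0}))-(n-2)^{2}x_{0}/(n(n-1))<0$), so $J>0$ on $[0,x_{0})$.

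\emph{Main obstacle.}
I expect the genuine difficulty to lie in two places, both absent from the $\alpha$-region halves (those reduce to the two algebraic identities above and to the monotonicity of $g$). First, pinning down the explicit $y_{n}$: solving $g(s_{0})=q$ in closed form, recognizing the resulting cubic, and checking that one selects the correct root (the one with $g(s_{0})=+q$ and $t>1$) is the single genuinely delicate computation, and is the reason the $\arctan/\cos$ expression appears. Second, the $\beta$-region estimates in (ii) and (vi): knowing that the relevant polynomial has a forced multiple zero at $x_{0}$ and positive values at the endpoints still leaves open a possible sign change inside $(0,x_{0})$, so one must carry out an honest, if elementary, sign analysis of the remaining linear or quadratic factor.
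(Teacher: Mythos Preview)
Your overall plan—splitting at $x_{0}$, using the two algebraic identities for $\alpha$, and exploiting the $C^{2}$-matching to force high-order vanishing at $x_{0}$—is exactly the paper's strategy, and parts (i), (iii), (iv), (v) match the paper's arguments essentially line for line. The identification of $y_{n}$ via the cubic $4(1-q^{2})t^{3}-3t-1=0$ and the trigonometric formula is a nice addition; the paper simply records the cubic \eqref{bneq} and asserts $\varphi_{1}(x_{0})=\tfrac{3}{n+2}$ without deriving the closed form.

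Two places differ. For (vi) the paper gives a one-line argument you should prefer: the left-hand side is increasing in $\gamma$ (for fixed $x$), so $\gamma\le\alpha$ from (iv) together with the identity for $\alpha$ finishes it immediately. Your $(x-x_{0})^{3}L$ factorisation with the $J'''$-jump computation is correct, but it is considerably more work than necessary.

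For (ii) on $[0,x_{0})$ the paper takes a different route from your $(x-x_{0})^{2}Q$ factorisation: it computes $\psi''(x)=3\alpha''(x_{0})\bigl[\alpha''(x_{0})x^{2}+(\alpha'(x_{0})-x_{0}\alpha''(x_{0}))x+nc\bigr]$ directly and shows this is positive on $[0,x_{0})$, which with $\psi(x_{0})=\psi'(x_{0})=0$ gives $\psi>0$ at once. The positivity of the bracket is obtained from two concrete estimates: $\alpha'(x_{0})\ge0$ (via the auxiliary point $x_{1}=(n\sqrt{n-1}-2n+2)c$ where $\alpha'$ vanishes, together with $x_{0}\ge x_{1}$) and $2x_{0}\alpha''(x_{0})+\alpha'(x_{0})<\tfrac{2nc}{5x_{0}}$ (from the crude bound $y_{n}\le\tfrac{2}{15}n(n+2)$). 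This is precisely the ``honest sign analysis'' you flag as the main obstacle; your endpoint-and-leading-coefficient data on $Q$ are not by themselves sufficient, and the paper's $\psi''>0$ argument is what closes that gap.
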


\begin{proof}
  (i) By direct computations, for $x>0$, we get
  \[ \alpha' ( x ) = \frac{n}{2 ( n-1 )} - \frac{n-2}{2 ( n-1 )}   \frac{x+2 (
     n-1 ) c}{\sqrt{x^{2} +4 ( n-1 ) c x}} , \]
  \[ \alpha'' ( x ) = \frac{2 ( n-2 ) ( n-1 ) c^{2}}{( x^{2} +4 ( n-1 ) c x
     )^{3/2}} , \]
  \[ \alpha''' ( x ) =- \frac{6 ( n-2 ) ( n-1 ) ( x+2 ( n-1 ) c ) c^{2}}{(
     x^{2} +4 ( n-1 ) c x )^{5/2}} . \]
  Notice that $y_{n}$ is the only positive root of equation
  \begin{equation}
    y_{n} ( y_{n} +6 ( n-1 ) )^{2} = \left( \frac{n^{2} -4n+6}{n^{2} -4}
    \right)^{2} ( y_{n} +4 ( n-1 ) )^{3} . \label{bneq}
  \end{equation}

  Let $\varphi_{1} ( x ) =2x \alpha'' ( x ) + \alpha' ( x ) = \frac{n}{2 ( n-1
  )} - \frac{( n-2 ) x^{2} ( x+6 ( n-1 ) c )}{2 ( n-1 ) ( x^{2} +4 ( n-1 ) c x
  )^{3/2}}$. From (\ref{bneq}) we get $\varphi_{1} ( x_{0} ) = \frac{3}{n+2}$.
  We have $\lim_{x \rightarrow \infty} \varphi_{1} ( x ) = \frac{1}{n-1}$.
  Since $\varphi_{1}' ( x ) =- \frac{6 ( n-2 ) ( n-1 ) c^{2} x^{2}}{( x^{2} +4
  ( n-1 ) c x )^{5/2}} <0$, we get $\frac{1}{n-1} < \varphi_{1} ( x ) <
  \frac{3}{n+2}$ if $x>x_{0}$.

  Let $\varphi_{2} ( x ) =2x \beta'' ( x ) + \beta' ( x ) = \alpha' ( x_{0} )
  + \alpha'' ( x_{0} ) ( 3x-x_{0} )$. Then we get $\varphi_{2}' ( x ) =3
  \alpha'' ( x_{0} ) >0$. Thus we have $\varphi_{2} ( x ) < \varphi_{2} (
  x_{0} ) = \frac{3}{n+2}$ if $0 \leqslant x<x_{0}$. Hence assertion (i) follows.

  (ii) It's easy to check that $\alpha$ and $\alpha'$ satisfy
  \begin{equation}
    ( \alpha ( x ) +n c ) x  \alpha' ( x ) =2c x+ \alpha ( x )^{2} -n c
    \alpha ( x ) . \label{alid1}
  \end{equation}

  Let $\psi ( x ) = ( \beta ( x ) +n c ) x  \beta' ( x ) -2c x- \beta ( x
  )^{2} +n c  \beta ( x )$. From the $C^{2}$-continuity of $\gamma$ and
  (\ref{alid1}), we get $\psi ( x_{0} ) = \psi' ( x_{0} ) =0$. Making
  calculation, we get
  \[ \psi'' ( x ) =3 \alpha'' ( x_{0} ) [ \alpha'' ( x_{0} ) x^{2} + ( \alpha'
     ( x_{0} ) -x_{0} \alpha'' ( x_{0} ) ) x+n c ] . \]

  Let $x_{1} = \left( n \sqrt{n-1} -2n+2 \right) c$. We have $\alpha' ( x_{1}
  ) =0$, $\alpha'' ( x_{1} ) = \frac{2}{( n-2 )^{2} \sqrt{n-1} c}$ and
  $\varphi_{1} ( x_{1} ) = \frac{4}{2 \sqrt{n-1} +n}$. Since $\varphi_{1}' ( x
  ) <0$ and $\varphi_{1} ( x_{0} ) \leqslant \varphi_{1} ( x_{1} )$, we get
  $x_{0} \geqslant x_{1}$. Since $\alpha'' ( x ) >0$, we have $\alpha' ( x_{0} )
  \geqslant \alpha' ( x_{1} ) =0$. By the definition of $y_{n}$, we have
  $y_{n} <4 ( 1-n ) + \frac{2 ( n^{2} -4 )}{\sqrt{2n-5}} \leqslant
  \frac{2}{15} n ( n+2 ) = \frac{2n}{5 \varphi_{1} ( x_{0} )}$. This yields
  \[ 2x_{0} \alpha'' ( x_{0} ) + \alpha' ( x_{0} ) < \frac{2n c}{5x_{0}} .\] Thus
  we have $\psi'' ( x ) \geqslant 3 \alpha'' ( x_{0} ) n c \left( 1-
  \frac{x}{5x_{0}} \right)$. If $0 \leqslant x<x_{0}$, then $\psi'' ( x ) >0$.
  Therefore, we have $\psi' ( x ) <0< \psi ( x )$ if $0<x < x_{0}$.
  This proves (ii).

  (iii) We have
  \[ \alpha ( x ) -x  \alpha' ( x ) =n c- \frac{( n-2 ) c x}{\sqrt{x^{2} +4 (
     n-1 ) c x}} >0. \]
  Since $( \beta ( x ) -x \beta' ( x ) )' =- \alpha'' ( x_{0} ) x \leqslant
  0$, we get $\beta ( x ) -x \beta' ( x ) \geqslant \alpha ( x_{0} ) -x_{0}
  \alpha' ( x_{0} ) >0$ if $0 \leqslant x<x_{0}$. Hence assertion (iii) is
  proved.

  (iv) We have $\alpha''' ( x ) <0= \beta''' ( x )$. From $\alpha ( x_{0}
  ) = \beta ( x_{0} )$, $\alpha' ( x_{0} ) = \beta' ( x_{0} )$ and $\alpha'' (
  x_{0} ) = \beta'' ( H_{0} )$, we obtain $\alpha ( x ) > \beta ( x )$ for $0
  \leqslant x<x_{0}$, and $\alpha ( x ) < \beta ( x )$ for $x>x_{0}$.

  (v) It's easy to verify that $2c< \alpha ( x ) - \frac{x}{n-1} \leqslant n
  c$. Since $\alpha'' ( x ) >0$ and $\beta'' ( x ) = \alpha'' ( x_{0} ) >0$,
  we get $\gamma'' ( x ) >0$. So, $\lim_{x \rightarrow \infty}   \gamma' ( x )
  = \frac{1}{n-1}$ implies $\gamma' ( x ) < \frac{1}{n-1}$. Then we have
  $\gamma ( x ) - \frac{x}{n-1} > \lim_{x \rightarrow \infty} (\alpha ( x ) -
  \frac{x}{n-1}) =2c$. This proves (v).

  (vi) Note that $\alpha$ satisfies the following identity
  \begin{equation}
    \frac{n-2}{\sqrt{n ( n-1 )}} \sqrt{x  \left( \alpha ( x ) - \frac{1}{n} x
    \right)} + \alpha ( x ) = \frac{2}{n} x+n c. \label{alid2}
  \end{equation}
  Combing (\ref{alid2}) and $\gamma ( x ) \leqslant \alpha ( x )$, we prove
  (vi).
\end{proof}

The following lemma will be used in the proofs of Corollaries
\ref{n195} and \ref{theo4}.

\begin{lemma}
  We have $\gamma ( x ) >  \frac{9}{5} \sqrt{n-1}  c$, where $\gamma(x)$ is
    defined by (\ref{defgama}). In particular, $k_n>1.999\sqrt{n-1}$ for $5 \leqslant n \leqslant 9$, and $k_{10}=6$.
\end{lemma}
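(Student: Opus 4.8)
The plan is to identify $k_{n}c$ with $\min_{x\ge 0}\gamma(x)$ and then to prove $k_{n}>\tfrac{9}{5}\sqrt{n-1}$; the remaining assertions are numerical evaluations. By the scaling $\alpha(x)=c\,\alpha_{1}(x/c)$, which propagates to $\beta$ and hence to $\gamma$, we may take $c=1$, so that $x_{0}=y_{n}$. By Lemma \ref{app}(iv)--(v) the function $\gamma$ of (\ref{defgama}) is convex (indeed $\gamma''>0$), equals $\alpha$ on $[x_{0},\infty)$ and $\beta$ on $[0,x_{0}]$; since, by the proof of Lemma \ref{app}(ii), $\alpha$ attains its global minimum $2\sqrt{n-1}$ at $x_{1}=n\sqrt{n-1}-2n+2\le x_{0}$ and is nondecreasing thereafter, we get $\inf_{x\ge x_{0}}\gamma=\alpha(x_{0})=\gamma(x_{0})$ and hence $\min_{x\ge 0}\gamma(x)=\min_{x\in[0,x_{0}]}\beta(x)$. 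As $\beta$ is an upward parabola with vertex $x_{*}=x_{0}-\alpha'(x_{0})/\alpha''(x_{0})\le x_{0}$ (using $\alpha'(x_{0})\ge\alpha'(x_{1})=0$), its minimum over $[0,x_{0}]$ is $\beta(0)$ if $x_{*}\le 0$ and $\beta(x_{*})$ if $x_{*}\ge 0$; a short computation with $2x_{0}\alpha''(x_{0})+\alpha'(x_{0})=\tfrac{3}{n+2}$ (the equality case of Lemma \ref{app}(i), i.e.\ (\ref{bneq})) shows the first alternative holds exactly for $n=3$ and the second for $n\ge 4$, so $\min_{x\ge 0}\gamma$ is precisely $k_{n}$. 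Thus the lemma is equivalent to $k_{n}>\tfrac95\sqrt{n-1}$.

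For $n\ge 4$ we have $k_{n}=\alpha_{1}(y_{n})-\dfrac{\alpha_{1}'(y_{n})^{2}}{2\alpha_{1}''(y_{n})}$, and since $\alpha_{1}(y_{n})=\alpha_{1}(x_{0})\ge\alpha_{1}(x_{1})=2\sqrt{n-1}$, it suffices to show $\dfrac{\alpha_{1}'(y_{n})^{2}}{2\alpha_{1}''(y_{n})}<\dfrac{\sqrt{n-1}}{5}$. Writing $Q_{n}=2y_{n}\alpha_{1}''(y_{n})=\dfrac{4(n-2)(n-1)}{\sqrt{y_{n}}\,(y_{n}+4(n-1))^{3/2}}$ and using $\alpha_{1}'(y_{n})=\tfrac{3}{n+2}-Q_{n}$, the left side becomes $y_{n}\,\dfrac{(\tfrac{3}{n+2}-Q_{n})^{2}}{Q_{n}}$, which for fixed $n$ decreases in $Q_{n}$ while $Q_{n}$ decreases in $y_{n}$; hence an upper bound follows from a sufficiently precise upper bound on $y_{n}$. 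Since $\tfrac{3}{n+2}-Q_{n}=\alpha_{1}'(y_{n})$ is a small difference of comparable terms whose square enters the estimate, crude bounds on $y_{n}$ are useless here: one uses instead the trigonometric expression for $y_{n}$, estimating $\tfrac13\arctan\tfrac{n^{2}-4n+6}{2(n-1)\sqrt{2n-5}}$ sharply enough that its cosine lies in a narrow interval around $\sqrt3/2$, together with the algebraic bound $y_{n}\ge n\sqrt{n-1}-2n+2$. This reduces the claim to an elementary inequality in $n$ valid for all large $n$; the finitely many small values are checked directly.

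The case $n=3$ (where $k_{3}=\beta(0)$) is a single numerical check, as are the bounds $k_{n}>1.999\sqrt{n-1}$ for $5\le n\le 9$; for $n=10$ one verifies that $y_{10}=12$ solves (\ref{bneq}) (since $12\cdot 66^{2}=(\tfrac{66}{96})^{2}\cdot 48^{3}$), whence $\alpha_{1}(12)=10+\tfrac59\cdot12-\tfrac49\cdot24=6$ and $\alpha_{1}'(12)=\tfrac59-\tfrac49\cdot\tfrac{30}{24}=0$, so $x_{*}=x_{0}$ and $k_{10}=\alpha_{1}(y_{10})=6$. The crux --- and the only non-routine step --- is controlling $y_{n}$, and in particular the $\arctan$ in its definition, precisely enough that the near-cancellation inside $\alpha_{1}'(y_{n})$ is captured; with that in hand the remaining estimates are bookkeeping.
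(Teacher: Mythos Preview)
Your identification of $k_{n}c$ with $\min_{x\ge 0}\gamma(x)$ is correct and useful, and the explicit verification for $n=10$ (via $y_{10}=12$, $\alpha_{1}(12)=6$, $\alpha_{1}'(12)=0$) is clean and matches the paper. However, the heart of the lemma --- the uniform bound $k_{n}>\tfrac{9}{5}\sqrt{n-1}$ for general $n\ge 4$ --- is not actually proved in your proposal. You reduce it to bounding $y_{n}\,\dfrac{(\tfrac{3}{n+2}-Q_{n})^{2}}{Q_{n}}$ and then state that one should estimate the $\arctan$ in the definition of $y_{n}$ ``sharply enough that its cosine lies in a narrow interval around $\sqrt{3}/2$'', but you never perform this estimate, nor do you state what ``elementary inequality in $n$'' results, nor which finitely many small values must be checked. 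Since you yourself emphasize that this step requires genuine precision (because $\alpha_{1}'(y_{n})=\tfrac{3}{n+2}-Q_{n}$ is a small difference of comparable quantities), leaving it undone is a real gap, not just bookkeeping.

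The paper avoids this difficulty entirely by a different device. Rather than sharpening the trigonometric expression for $y_{n}$, it introduces an explicit algebraic comparison point $x_{2}=\sqrt{2(n-1)}\bigl(\sqrt{n-1}-1/\sqrt{2}\bigr)^{2}$ at which $\alpha_{1}'(x_{2})=\tfrac{1}{2n-3}$ and $\alpha_{1}''(x_{2})=\tfrac{4\sqrt{2}(n-2)}{\sqrt{n-1}(2n-3)^{3}}$ have closed forms. From the crude bound $y_{n}<4(1-n)+\tfrac{2(n^{2}-4)}{\sqrt{2n-5}}<x_{2}$ (just $\cos\le 1$) and the monotonicity $\alpha_{1}'$ increasing, $\alpha_{1}''$ decreasing, one gets directly $\alpha_{1}'(y_{n})<\tfrac{1}{2n-3}$ and $\alpha_{1}''(y_{n})>\alpha_{1}''(x_{2})$, whence $\dfrac{\alpha_{1}'(y_{n})^{2}}{2\alpha_{1}''(y_{n})}<\dfrac{\sqrt{n-1}(2n-3)}{8\sqrt{2}(n-2)}<\dfrac{\sqrt{n-1}}{5}$ for $n\ge 6$. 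The cases $n=3,4,5$ are then single numerical checks. The point is that bounding $\alpha_{1}'$ and $\alpha_{1}''$ \emph{separately} at a well-chosen point sidesteps the near-cancellation you flagged; your rewriting via $Q_{n}$, while valid, still leaves you needing a sharp two-sided control on $y_{n}$ that you do not supply.
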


\begin{proof}
  Let $x_{1} = \left( n \sqrt{n-1} -2n+2 \right) c$. Since $\alpha' ( x_{1} )
  =0$ and $a'' ( x ) >0$, we get $\alpha ( x ) \geqslant \alpha ( x_{1} ) =2
  \sqrt{n-1} c$.

  If $n=3$, we have $\beta ( x ) >0.027 \frac{x^{2}}{c} +0.304x+2.661c>1.8
  \sqrt{2} c$.

    If $n\geqslant 4$, by the definition of $\beta$, we have $\beta ( x ) \geqslant
  \alpha ( x_{0} ) - \frac{\alpha' ( x_{0} )^{2}}{2 \alpha'' ( x_{0} )} =k_{n}
  c$.

  Making a calculation, we get $k_{4} >3.443>1.8 \sqrt{3}$ and $k_{5}
  >3.998=1.999 \times 2$.

  If $n \geqslant 6$, putting $x_{2} = \sqrt{2 ( n-1 )} \left( \sqrt{n-1} -1/
  \sqrt{2} \right)^{2} c$, we have $x_{0} < [ 4 ( 1-n ) + \frac{2 ( n^{2}
  -4 )}{\sqrt{2n-5}} ] c<x_{2}$. Then we get $\alpha' ( x_{0} ) <
  \alpha' ( x_{2} ) = \frac{1}{2n-3}$ and $\alpha'' ( x_{0} ) > \alpha'' (
  x_{2} ) = \frac{4 \sqrt{2} ( n-2 )}{\sqrt{n-1} ( 2n-3 )^{3} c}$. Thus we
  obtain $\beta ( x ) >2 \sqrt{n-1} c- \frac{\sqrt{n-1} ( 2n-3 ) c}{8 \sqrt{2}
  ( n-2 )} > \frac{9}{5} \sqrt{n-1} c$.

    In fact, by more computations, we get $k_n>1.999\sqrt{n-1}$ if $5 \leqslant n \leqslant 9$, and $k_{10}=6$.
\end{proof}

Let $\omega \in C^{2} [ 0,+ \infty )$ be a positive function which takes the
following form
\[ \omega ( x ) = \frac{x^{2}}{\sqrt{x^{2} +4 ( n-1 ) c x}} \left[ \left( 1+
   \tfrac{n^{2} c}{x} \right) \tfrac{\frac{n}{n-2} - ( 1+4 ( n-1 ) c /x
   )^{-1/2}}{\frac{n}{n-2} +  ( 1+4 ( n-1 ) c /x )^{-1/2}} \right]^{2}
   \hspace{1em} \tmop{for} \hspace{0.5em} x \geqslant x_{0} . \]
Then $\omega$ has the following properties.

\begin{lemma}
  \label{wpp}For $n \geqslant 3$, $c>0$ and $x \geqslant 0$, $\omega$
  satisfies
  \begin{enumerateroman}
    \item $( \gamma ( x ) +n c ) x  \frac{\omega' ( x )}{\omega ( x )} =2
    \gamma ( x ) -x  \gamma' ( x ) -3 n c$ if $x \geqslant x_{0}$,

    \item $2x_{0}   \omega'' ( x_{0} ) + \omega' ( x_{0} ) >0$, $\lim_{x
    \rightarrow \infty}  (2x  \omega'' ( x ) + \omega' ( x ) ) >0$,

    \item $\omega ( x ) -x  \omega' ( x )$ is bounded.
  \end{enumerateroman}
\end{lemma}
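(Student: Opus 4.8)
The plan is to reduce all three assertions to explicit computations by means of the substitution $u=(1+4(n-1)c/x)^{-1/2}$, equivalently $x=4(n-1)cu^2/(1-u^2)$, which maps $[x_0,\infty)$ diffeomorphically onto $[u_0,1)$ with $u_0=\sqrt{y_n/(y_n+4(n-1))}$. A purely algebraic check shows that on $[x_0,\infty)$ one has $\sqrt{x^2+4(n-1)cx}=x/u$, $1+n^2c/x=(n^2-(n-2)^2u^2)/(4(n-1)u^2)$, and that the bracket in the definition of $\omega$ equals $(n-(n-2)u)^2/(4(n-1)u^2)$, so that
\[ \omega=\frac{c\,(n-(n-2)u)^4}{4(n-1)\,u(1-u^2)},\qquad \alpha(x)=\frac{c\,(n(1-u)^2+4u)}{1-u^2},\qquad \alpha(x)+nc=\frac{2c\,(n-(n-2)u)}{1-u^2},\qquad \frac{\mathd x}{\mathd u}=\frac{8(n-1)cu}{(1-u^2)^2}. \]
Since $\gamma=\alpha$ on $[x_0,\infty)$ by (\ref{defgama}), all of (i)--(iii) will be read off from these closed forms, together with the continuity of $\omega$ on $[0,\infty)$ for the parts that refer to the value at $x_0$.

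For (i): by the identity (\ref{alid1}) satisfied by $\alpha$ we have $x\alpha'=(2cx+\alpha^2-nc\alpha)/(\alpha+nc)$ on $[x_0,\infty)$, whence a one-line manipulation gives $2\alpha-x\alpha'-3nc=(\alpha^2-2cx-3n^2c^2)/(\alpha+nc)$, so that (i) is equivalent to $x(\alpha+nc)^2\,\omega'/\omega=\alpha^2-2cx-3n^2c^2$. Writing $\omega'/\omega=(\mathd\log\omega/\mathd u)/(\mathd x/\mathd u)$ with $\mathd\log\omega/\mathd u=-\tfrac{4(n-2)}{n-(n-2)u}-\tfrac1u+\tfrac1{1-u}-\tfrac1{1+u}$ and inserting the closed forms above, the desired equality collapses, after clearing denominators by $(1-u^2)^2$, to the polynomial identity in $u$
\[ (n(1-u)^2+4u)^2-8(n-1)u^2(1-u^2)-3n^2(1-u^2)^2=-8(n-2)u(n-(n-2)u)(1-u^2)+2(n-(n-2)u)^2(3u^2-1), \]
both sides being degree four in $u$ with coefficients polynomial in $n$; I would verify it by direct expansion. (Equivalently, $\omega$ is by construction the closed-form solution of the linear ODE $x(\gamma+nc)\omega'=(2\gamma-x\gamma'-3nc)\omega$ on $[x_0,\infty)$, and this step is exactly the verification that it solves it.)

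For (ii) and (iii): from (i), $\omega'=R\omega$ with $R:=(2\gamma-x\gamma'-3nc)/(x(\gamma+nc))$, so $\omega''=(R'+R^2)\omega$ and hence
\[ \omega-x\omega'=\omega\cdot\frac{4nc-\gamma+x\gamma'}{\gamma+nc},\qquad 2x\omega''+\omega'=(2xR'+2xR^2+R)\,\omega; \]
since $\omega>0$, the sign of $2x\omega''+\omega'$ equals that of $2xR'+2xR^2+R$. Taylor-expanding the closed form of $\omega$ at $u=1^-$ gives the asymptotics $\omega(x)=\tfrac{x}{(n-1)^2}+\tfrac{2(2n-1)c}{n-1}+O(1/x)$ as $x\to\infty$; from this $\omega-x\omega'\to\tfrac{2(2n-1)c}{n-1}$, which together with the continuity of $\omega-x\omega'$ on $[0,\infty)$ (continuous and convergent at $\infty$, hence bounded on $[x_0,\infty)$, and continuous on the compact $[0,x_0]$) yields (iii); and carrying the expansion to the order at which $2x\omega''$ no longer survives in the limit gives $2x\omega''+\omega'\to\tfrac1{(n-1)^2}>0$, the second inequality of (ii). For the first inequality of (ii) I would evaluate $2xR'+2xR^2+R$ at $x=x_0$: using the explicit formulas for $\alpha',\alpha''$ and (\ref{alid1}) to express $R(x_0)$ and $R'(x_0)$ through $\alpha(x_0),\alpha'(x_0),\alpha''(x_0)$, and then substituting the relation $2x_0\alpha''(x_0)+\alpha'(x_0)=\tfrac{3}{n+2}$ — which is precisely the equation (\ref{bneq}) defining $y_n$, as established in the proof of Lemma \ref{app}(i) — the expression reduces to a single scalar inequality in $n$, to be checked directly for $n\geq3$.

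The main obstacle I anticipate is this last evaluation at $x_0$: unlike the behaviour at infinity, positivity of $2x\omega''+\omega'$ there cannot be extracted from a crude asymptotic expansion and genuinely uses the defining cubic equation for $y_n$, so the bookkeeping in $2x_0R'(x_0)+2x_0R(x_0)^2+R(x_0)$, and the elimination of $x_0\alpha''(x_0)$ via (\ref{bneq}), must be carried out carefully. A secondary point requiring care is obtaining the constant and $O(1/x)$ terms in the expansion of $\omega$ cleanly, so that the $2x\omega''$ contribution provably vanishes in the limit in (ii).
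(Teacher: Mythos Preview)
Your approach is correct and largely parallel to the paper's, but organized differently. For (i) you carry out via the $u$-substitution what the paper simply asserts as the identity (\ref{dlnw}); your polynomial identity in $u$ is valid (it checks at $u=0,1$ and by degree). For (iii) and the limit in (ii) you reach the same constants $\tfrac{2(2n-1)c}{n-1}$ and $\tfrac{1}{(n-1)^2}$ as the paper does.

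The only substantive difference is the positivity of $2x_0\omega''(x_0)+\omega'(x_0)$. Here the paper does two things you do not: first, it uses the elementary lower bound
\[
\frac{2x\omega''+\omega'}{\omega}\;=\;2xR'+2xR^2+R\;\geqslant\;2xR'+R
\]
(dropping the nonnegative $2xR^2$), so that only $R$ and $R'$ enter and these are expressible purely through $\alpha,\alpha',\alpha''$ via (\ref{dlnw}); second, it feeds in not the \emph{equality} $2x_0\alpha''(x_0)+\alpha'(x_0)=\tfrac{3}{n+2}$ but the three \emph{inequalities} $\alpha(x_0)<nc$, $\alpha'(x_0)\geqslant 0$, and $2x_0\alpha''(x_0)+\alpha'(x_0)<\tfrac{2nc}{5x_0}$ already obtained in the proof of Lemma~\ref{app}, treating $n=3$ numerically and $n\geqslant 4$ analytically. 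Your plan to substitute the defining relation for $y_n$ and reduce to ``a single scalar inequality in $n$'' is optimistic: that relation fixes $y_n$ only implicitly, so after elimination you will still carry $y_n$ (equivalently $u_0$) as a function of $n$, and checking positivity for all $n\geqslant 3$ will in practice require exactly the kind of bounds on $y_n$ the paper uses. Your route is workable, but the paper's lower bound $2xR'+R$ together with those three inequalities is the cleaner way through.
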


\begin{proof}
  (i) When $x \geqslant x_{0}$, we see that $\omega$ satisfies the
  following identity
  \begin{equation}
    \frac{\omega' ( x )}{\omega ( x )} = \frac{2  \alpha ( x ) -x  \alpha' ( x
    ) -3 n c}{x ( \alpha ( x ) +n c )} , \label{dlnw}
  \end{equation}
  which implies (i).

  (ii) By a computation, we get that $\lim_{x \rightarrow \infty} ( 2x  \omega'' ( x ) +
  \omega' ( x ) )= \frac{1}{( n-1 )^{2}}$.

  If $n=3$, we have $2x_{0}   \omega'' ( x_{0} ) + \omega' ( x_{0} ) \approx
  11.4>0$.

  If $n \geqslant 4$, we have $y_{n} <4 ( 1-n ) + \frac{2 ( n^{2} -4
  )}{\sqrt{2n-5}} < ( n-2 )^{2}$. So, we get $\alpha ( x_{0} ) < \alpha ( ( n-2
  )^{2} c ) =n c$. From (\ref{dlnw}), for $x \geqslant x_{0}$, we have
  \begin{eqnarray*}
    \frac{2x \omega'' ( x ) + \omega' ( x )}{\omega ( x )} & \geqslant & 2x
    \left( \frac{\omega' ( x )}{\omega ( x )} \right)' + \frac{\omega' ( x
    )}{\omega ( x )}\\
    & = & \frac{2 \alpha' ( x ) ( x \alpha' ( x ) +5n c )}{( \alpha ( x ) +n
    c )^{2}} + \frac{5n c-2x^{2} \alpha'' ( x ) -x  \alpha' ( x )}{x ( \alpha
    ( x ) +n c )} - \frac{2}{x} .
  \end{eqnarray*}
  Combing the above inequality with $\alpha ( x_{0} ) <n c$, $\alpha' ( x_{0}
  ) \geqslant 0$ and $2x_{0} \alpha'' ( x_{0} ) + \alpha' ( x_{0} ) < \frac{2n
  c}{5x_{0}}$, we obtain $2x_{0}   \omega'' ( x_{0} ) + \omega' ( x_{0} ) >0$.

  (iii) The assertion follows from $\lim_{x \rightarrow \infty} (\omega ( x )
  -x  \omega' ( x ) )= \frac{2 ( 2n-1 ) c}{n-1}$.
\end{proof}

For convenience, we denote $\gamma ( H^{2} )$, $\gamma' ( H^{2} )$,
$\gamma'' ( H^{2} )$, $\omega (H^{2} )$, $\omega' ( H^{2} )$ and $\omega'' ( H^{2} )$
by $\gamma$, $\gamma'$, $\gamma''$, $\omega$, $\omega'$
and $\omega''$, respectively.

Suppose that $M_{0}$ is a compact hypersurface satisfying $| h |^{2} < \gamma$.
Then there exists a sufficiently small positive number $\varepsilon$, such
that
\[ | h |^{2} < \gamma - \varepsilon   \omega . \]
Now we show that this pinching condition is preserved.

\begin{theorem}
  \label{pinch}If $M_{0}$ satisfies $| h |^{2} < \gamma - \varepsilon
  \omega$, then this condition holds for all time $t \in [ 0,T )$.
\end{theorem}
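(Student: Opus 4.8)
The plan is to apply the parabolic maximum principle to the scalar function
\[ f := |h|^{2} - \bigl( \gamma ( H^{2} ) - \varepsilon \, \omega ( H^{2} ) \bigr) , \]
which is strictly negative on $M_{0}$, and to show that the open set $\{ f < 0 \} \subset M \times [ 0,T )$ is invariant under the flow. Write $x = H^{2}$ and $g ( x ) = \gamma ( x ) - \varepsilon \omega ( x )$, so that $f = |h|^{2} - g ( H^{2} )$; since $\omega > 0$, shrinking $\varepsilon$ never destroys the initial inequality, and I will fix it small at the end. From Lemma \ref{evo}(iii) one computes $\partial_{t} ( H^{2} ) - \Delta ( H^{2} ) = -2 | \nabla H |^{2} + 2 H^{2} ( |h|^{2} + n c )$, so, as $g$ is $C^{2}$,
\[ \partial_{t} g ( H^{2} ) - \Delta g ( H^{2} ) = - \bigl( 2 g' ( x ) + 4 x g'' ( x ) \bigr) | \nabla H |^{2} + 2 x g' ( x ) \bigl( |h|^{2} + n c \bigr) . \]
Subtracting this from Lemma \ref{evo}(iv) and regrouping yields $\partial_{t} f - \Delta f = G + R ( |h|^{2} , x )$, where $G := -2 | \nabla h |^{2} + ( 2 g' + 4 x g'' ) | \nabla H |^{2}$ is a gradient term and $R ( s,x ) := 4 c x + 2 s^{2} - 2 n c s - 2 x g' ( x ) ( s + n c )$ is a reaction term.

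The next task is to check that both pieces have the favorable sign. For $G$, Lemma \ref{dA2}(i) gives $| \nabla h |^{2} \geqslant \tfrac{3}{n+2} | \nabla H |^{2}$, hence $G \leqslant 2 \bigl( 2 x g'' ( x ) + g' ( x ) - \tfrac{3}{n+2} \bigr) | \nabla H |^{2}$, and it is enough to have $2 x g'' + g' \leqslant \tfrac{3}{n+2}$ on $[ 0,\infty )$. Now $2 x g'' + g' = ( 2 x \gamma'' + \gamma' ) - \varepsilon ( 2 x \omega'' + \omega' )$; by Lemma \ref{app}(i) the first summand is $\leqslant \tfrac{3}{n+2}$ with equality only at $x_{0}$, and by Lemma \ref{wpp}(ii) the factor $2 x \omega'' + \omega'$ is positive at $x_{0}$ and has a positive limit at infinity. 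A short compactness argument --- a small neighbourhood of $x_{0}$ (where positivity of $2 x \omega'' + \omega'$ absorbs the equality for every $\varepsilon > 0$), a compact part bounded away from $x_{0}$ (where a fixed gap in Lemma \ref{app}(i) beats a small $\varepsilon$), and a neighbourhood of infinity (where $2 x \gamma'' + \gamma' \to \tfrac{1}{n-1} < \tfrac{3}{n+2}$) --- produces $\varepsilon_{1} > 0$ such that $2 x g'' + g' < \tfrac{3}{n+2}$ on $[ 0,\infty )$ whenever $\varepsilon \leqslant \varepsilon_{1}$; in particular $G \leqslant 0$ pointwise. For $R$, the key identity is that, where $|h|^{2} = g ( H^{2} )$, one has $R ( g ( x ) ,x ) = -2 \, P [ g ] ( x )$ with $P [ u ] := ( u + n c ) x u' - 2 c x - u^{2} + n c u$. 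On $x \geqslant x_{0}$, $\gamma = \alpha$ satisfies the identity $P [ \alpha ] = 0$; expanding $P [ \gamma - \varepsilon \omega ]$ in powers of $\varepsilon$ and substituting the identity of Lemma \ref{wpp}(i), $( \gamma + n c ) x \omega' = \omega ( 2 \gamma - x \gamma' - 3 n c )$, the $O(\varepsilon)$ term collapses to $+2 n c \, \varepsilon \, \omega$, so $P [ g ] ( x ) = \varepsilon \omega ( x ) \bigl[ 2 n c + \varepsilon ( x \omega' - \omega ) \bigr]$, which is positive for small $\varepsilon$ because $\omega > 0$ and $\omega - x \omega'$ is bounded (Lemma \ref{wpp}(iii)). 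On $0 \leqslant x < x_{0}$, $\gamma = \beta$ and $P [ \beta ] > 0$ strictly (Lemma \ref{app}(ii)), and this survives a small perturbation by $\varepsilon$ on the compact interval $[ 0,x_{0} ]$, the $C^{2}$-matching at $x_{0}$ together with Lemma \ref{wpp}(iii) handling a neighbourhood of $x_{0}$. This gives $\varepsilon_{2} > 0$ so that $R ( g ( x ) ,x ) = -2 P [ g ] ( x ) < 0$ for all $x \geqslant 0$ once $\varepsilon \leqslant \varepsilon_{2}$.

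To conclude I would run the standard first-time argument. Fix $\tau < T$ and assume $\{ f < 0 \}$ fails to be preserved on $[ 0,\tau ]$; since $M$ is compact and $f ( \cdot ,0 ) < 0$, there is a first time $t_{1} \in ( 0,\tau ]$ with $\max_{M} f ( \cdot ,t_{1} ) = 0$, attained at some $p_{1}$. At $( p_{1} ,t_{1} )$ we have $\nabla f = 0$, $\Delta f \leqslant 0$, $\partial_{t} f \geqslant 0$, and, since $f = 0$ there, $|h|^{2} = g ( H^{2} )$; therefore
\[ 0 \leqslant ( \partial_{t} f - \Delta f ) ( p_{1} ,t_{1} ) = G ( p_{1} ,t_{1} ) + R \bigl( g ( H^{2} ) ,H^{2} \bigr) \leqslant R \bigl( g ( H^{2} ) ,H^{2} \bigr) < 0 , \]
a contradiction. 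Hence $f < 0$ on $M \times [ 0,\tau ]$ for every $\tau < T$, i.e.\ on all of $M \times [ 0,T )$, provided $\varepsilon \leqslant \min \{ \varepsilon_{1} ,\varepsilon_{2} \}$, which we may assume from the start.

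I expect the only genuine obstacle to be the two uniformity claims --- that a \emph{single} small $\varepsilon$ forces both $2 x g'' + g' < \tfrac{3}{n+2}$ and $P [ g ] > 0$ for \emph{every} $x \geqslant 0$ --- with all of the difficulty concentrated at $x = x_{0}$, where the unperturbed pinching function $\gamma$ yields only equalities in Lemma \ref{app}(i),(ii). The entire role of the auxiliary function $\omega$, through the identity and positivity in Lemma \ref{wpp}(i),(ii), is precisely to open a strict gap at $x_{0}$, while Lemma \ref{wpp}(iii) and the explicit asymptotics of $\alpha$ keep the $\varepsilon$-perturbation from overturning the inequalities as $x \to \infty$. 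Granting this, the remainder is Huisken's reaction-diffusion maximum-principle scheme from \cite{MR892052}, now carried out for the sharper pinching function $\gamma$.
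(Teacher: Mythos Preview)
Your proof is correct and follows essentially the same approach as the paper: your function $f$ is the paper's $U$, your gradient/reaction decomposition matches the paper's expansion of $(\partial_t-\Delta)U$, and both rely on precisely the same ingredients (Lemma~\ref{dA2}(i), Lemma~\ref{app}(i),(ii), and Lemma~\ref{wpp}(i),(ii),(iii)) followed by the scalar maximum principle. If anything, you spell out the uniformity-in-$\varepsilon$ compactness arguments (near $x_0$, on a compact middle region, and at infinity) more carefully than the paper, which simply asserts that the coefficient of $|\nabla H|^2$ is below $\tfrac{6}{n+2}$ and that the zeroth-order terms are negative ``when $\varepsilon$ is small enough''.
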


\begin{proof}
  Let $U= | h |^{2} - \gamma + \varepsilon   \omega$. From the evolution
  equations we have
  \begin{eqnarray*}
    \left( \dt - \Delta \right) U & = & -2 | \nabla h |^{2} +2 ( 2H^{2}
    \gamma'' + \gamma' - \varepsilon ( 2H^{2}   \omega'' + \omega' ) ) |
    \nabla H |^{2}\\
    &  & +4 c H^{2} +2 | h |^{4} -2n c | h |^{2} -2 ( \gamma' - \varepsilon
    \omega' ) \cdot H^{2} ( | h |^{2} +n c ) .
  \end{eqnarray*}
  By Lemma \ref{app} (i) and Lemma \ref{wpp} (ii), the coefficient of $|
  \nabla H |^{2}$ in the above formula is less than $\frac{6}{n+2}$. Then
  Lemma \ref{dA2} (i) yields $-2 | \nabla h |^{2} + \frac{6}{n+2} | \nabla H
  |^{2} \leqslant 0$. Then replacing $| h |^{2}$ by $U+ \gamma - \varepsilon
  \omega$, we get
  \begin{eqnarray}
    \left( \dt - \Delta \right) U & \leqslant & 2U^{2} +2U [ 2  \gamma -H^{2}
    \gamma' -n c+ \varepsilon ( H^{2}   \omega' -2  \omega ) ] \nonumber\\
    &  & +4 c H^{2} +2  \gamma^{2} -2 n c  \gamma -2 ( \gamma +n c ) H^{2}
    \gamma' \\
    &  & +2 \varepsilon \omega \left[ - 2 \gamma +H^{2}   \gamma' + n c+ (
    \gamma +n c ) H^{2} \frac{  \omega'}{\omega} \right] +2 \varepsilon^{2}
    \omega \cdot ( \omega -H^{2}   \omega' ) . \nonumber
  \end{eqnarray}
  By Lemma \ref{app} (ii), Lemma \ref{wpp} (i) and (iii), when $\varepsilon$ is
  small enough, we have
  \[ \left( \dt - \Delta \right) U<2U^{2} + \upsilon \cdot U. \]
  Therefore, the conclusion follows from the maximum principle.
\end{proof}

\section{An estimate for traceless second fundamental form}

In this section, we derive an estimate for the traceless second fundamental
form, which shows that the principal curvatures will approach each other along
the mean curvature flow.

\begin{theorem}
  \label{sa0h2}If $M_{0}$ satisfies $| h |^{2} < \gamma -
  \varepsilon   \omega$, then there exist constants $0< \sigma <1$
  and $C_{0} >0$ depending only on $M_{0}$, such that for all $t \in [ 0,T )$,
  we have
  \[ \normho^{2} \leqslant C_{0} ( H^{2} +c )^{1- \sigma} \mathe^{-2 \sigma c
     t} . \]
\end{theorem}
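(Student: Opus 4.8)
The plan is to run a maximum-principle argument on the scale-invariant quantity
\[
  f_{\sigma} := \normho^{2}\,(H^{2}+c)^{\sigma-1}\,\mathe^{2\sigma c t},
\]
for a small parameter $\sigma\in(0,1)$ to be fixed in terms of the pinching constant $\varepsilon$ of Theorem~\ref{pinch} (and of $n,c$), and to show $\left(\dt-\Delta\right)f_{\sigma}\le\langle X,\nabla f_{\sigma}\rangle$ for some vector field $X$. Since $M_{0}$ is compact, $f_{\sigma}(\cdot,0)$ is bounded, so the maximum principle gives $\sup_{M}f_{\sigma}(\cdot,t)\le\sup_{M}f_{\sigma}(\cdot,0)=:C_{0}$, which depends only on $M_{0}$; unravelling the definition of $f_{\sigma}$ this is exactly the asserted estimate. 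Equivalently, writing $v=H^{2}+c$, $u=\normho^{2}$, it suffices to prove $\left(\dt-\Delta\right)(uv^{\sigma-1})\le-2\sigma c\,uv^{\sigma-1}+\langle X,\nabla(uv^{\sigma-1})\rangle$.

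To obtain the evolution inequality I would first combine Lemma~\ref{evo}(v) with the identity $|\nabla h|^{2}-\tfrac1n|\nabla H|^{2}=|\nabla\mathring{h}|^{2}$ to get $\left(\dt-\Delta\right)u=-2|\nabla\mathring{h}|^{2}+2u(|h|^{2}-nc)$, and use Lemma~\ref{evo}(iii) for $\left(\dt-\Delta\right)v=-2|\nabla H|^{2}+2H^{2}(|h|^{2}+nc)$, together with $|\nabla v|^{2}=4H^{2}|\nabla H|^{2}$. Substituting, the reaction part of $\left(\dt-\Delta\right)(uv^{\sigma-1})$ is $2uv^{\sigma-2}\bigl[v(|h|^{2}-nc)-(1-\sigma)H^{2}(|h|^{2}+nc)\bigr]$ and the gradient part is $-2v^{\sigma-1}|\nabla\mathring{h}|^{2}+2(1-\sigma)uv^{\sigma-2}|\nabla H|^{2}-4\sigma(1-\sigma)uH^{2}v^{\sigma-3}|\nabla H|^{2}$, modulo a transport term which vanishes at a spatial maximum. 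At a positive spatial maximum of $f_{\sigma}$ one has $\nabla u=\tfrac{2(1-\sigma)H}{v}u\,\nabla H$; feeding this into the Cauchy--Schwarz inequality $|\nabla u|^{2}\le4u|\nabla\mathring{h}|^{2}$ and into the Kato-type bound $|\nabla\mathring{h}|^{2}\ge\tfrac{2(n-1)}{n(n+2)}|\nabla H|^{2}$ of Lemma~\ref{dA2}(i), the gradient part collapses to something $\le 2(1-\sigma)uv^{\sigma-3}(c-\sigma H^{2})|\nabla H|^{2}$. For the reaction part I would invoke the preserved pinching $|h|^{2}<\gamma(H^{2})-\varepsilon\omega(H^{2})$ (Theorem~\ref{pinch}) together with the estimates on $\gamma$ from Lemma~\ref{app}, in particular (v) and (vi); this turns the whole thing into a one-variable inequality in $H^{2}$ that one checks by hand, separately in the range where $H$ is bounded and in the range where $H$ is large.

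The step I expect to be the main obstacle is precisely this balancing of the reaction term against the gradient term. The reaction term $2\normho^{2}(|h|^{2}-nc)$ of $\normho^{2}$ changes sign: it is positive whenever $|h|^{2}>nc$, which by Lemma~\ref{app}(v) must happen once $H$ is large --- i.e.\ exactly in the regime where the flow is approaching a round point --- so the decay cannot be read off from the reaction term and must be extracted from the good gradient term $-2|\nabla\mathring{h}|^{2}$ after the substitution $\nabla u\sim\tfrac{H}{v}u\,\nabla H$. Making this quantitative uniformly in $H$, and choosing $\sigma$ small enough that no term is lost, is the technical heart of the argument and is where the whole budget of the preserved pinching $|h|^{2}<\gamma(H^{2})$ gets spent. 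Should a single test function fail to do the job, I would first establish the cruder estimate $\normho^{2}\le C(H^{2}+c)\,\mathe^{-\delta t}$ by the same argument applied to $\normho^{2}/(H^{2}+\mu c)$ --- for which both the gradient and the reaction terms are manifestly nonpositive at a maximum provided $\tfrac{n^{2}(n+2)}{2(n-1)}\le\mu\le 2n(n-1)$, a range which is nonempty for $n\ge3$ --- and then upgrade the exponent to $1-\sigma$ by combining this with the preserved pinching and a parabolic Stampacchia-type iteration.
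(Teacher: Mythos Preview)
Your direct maximum-principle plan cannot close. At a spatial maximum of $f_\sigma$ the substitution $\nabla u=\tfrac{2(1-\sigma)H}{v}\,u\,\nabla H$ only gives a lower bound on $|\nabla\mathring h|^{2}$ proportional to $|\nabla H|^{2}$; after the collapse the gradient block is, as you say, $2(1-\sigma)uv^{\sigma-3}(c-\sigma H^{2})|\nabla H|^{2}$. But nothing prevents $|\nabla H|$ from vanishing at the maximum, so this term contributes nothing. Meanwhile the reaction term is genuinely bad: under the preserved pinching its leading behaviour is $2\sigma|h|^{2}f_\sigma\sim\tfrac{2\sigma}{n-1}H^{2}f_\sigma$ for large $H$, unbounded and positive. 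No pointwise inequality absorbs it; the exponent $1-\sigma$ with $\sigma>0$ is simply not accessible by a scalar maximum principle on $f_\sigma$. This is already the situation in Huisken's Euclidean paper, and it is why the integral method exists.

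Your fallback is pointed in the right direction but is missing the decisive ingredient. The paper does end with a Stampacchia iteration, but what feeds it is not a cruder pointwise estimate; it is an $L^{p}$ bound on $f_\sigma$ (the paper uses $f_\sigma=\normho^{2}/\mathring\gamma^{\,1-\sigma}$, comparable to yours) for arbitrarily large $p$. The bad term $\int_{M_t}|h|^{2}f_\sigma^{p}$ is controlled using the Simons-type identity of Lemma~\ref{dA2}(ii): from $\Delta\normho^{2}\geqslant 2\langle\mathring h,\nabla^{2}H\rangle+2\varepsilon C_{2}|h|^{2}\normho^{2}$ (Lemma~\ref{lapa0}, and this is where the strict margin $\varepsilon\omega$ is actually spent) one multiplies by $f_\sigma^{p-1}$ and integrates by parts, converting $\int|h|^{2}f_\sigma^{p}$ into first-order terms that are then absorbed by the good $-|\nabla h|^{2}$ contribution in $\partial_t f_\sigma$ provided $\sigma\lesssim p^{-1/2}$. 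The resulting $L^{p}$ exponential decay is what drives the De~Giorgi--Stampacchia iteration to $L^{\infty}$. Without this Simons-identity step you have no mechanism to eat $\sigma\int|h|^{2}f_\sigma^{p}$, and the iteration never starts; your proposed cruder estimate, even if obtained, does not supply it.
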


Let $\mathring{\gamma} = \gamma - \frac{1}{n} H^{2}$. Theorem \ref{pinch}
says that $\normho^{2} < \mathring{\gamma} - \varepsilon   \omega$ holds for
all time. We denote by $\mathring{\gamma}' = \gamma' - \frac{1}{n}$,
$\mathring{\gamma}'' = \gamma''$ the first and second derivatives of
$\mathring{\gamma}$ with respect to $H^{2}$. For $0< \sigma <1$, we set
\[ f_{\sigma} = \frac{\normho^{2}}{\mathring{\gamma}^{1- \sigma}} . \]

To prove Theorem \ref{sa0h2}, we need to show that $f_{\sigma}$ decays
exponentially. First, we make an estimate for the time derivative of $f_{\sigma}$.

\begin{lemma}
  \label{ptf}There exists a positive constants $C_{1}$ depending only on $n$,
  $c$, such that \
  \[ \dt f_{\sigma} \leqslant \Delta f_{\sigma} + \frac{2C_{1}}{\normho} |
     \nabla f_{\sigma} | | \nabla H | - \frac{2 \varepsilon  f_{\sigma}}{C_{1}
     \normho^{2}} | \nabla H |^{2} +2 \sigma ( | h |^{2} -n c ) f_{\sigma} .
  \]
\end{lemma}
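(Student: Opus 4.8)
The plan is to compute $\dt f_\sigma$ directly from the evolution equations in Lemma \ref{evo}, using the quotient structure $f_\sigma = \normho^2 / \mathring\gamma^{1-\sigma}$, and then compare with $\Delta f_\sigma$ so that all second-order terms cancel and only gradient and zeroth-order terms remain. First I would record the two ingredients: the evolution $\dt\normho^2 = \Delta\normho^2 - 2|\nabla h|^2 + \tfrac{2}{n}|\nabla H|^2 + 2\normho^2(|h|^2 - nc)$ from Lemma \ref{evo}(v), and the evolution of $\mathring\gamma = \gamma(H^2) - \tfrac1n H^2$, which follows from Lemma \ref{evo}(iii) by the chain rule: $\dt\mathring\gamma = \mathring\gamma'\,\dt(H^2)$ where $\dt(H^2) = \Delta(H^2) - 2|\nabla H|^2 + 2H^2(|h|^2 + nc)$, and $\Delta(\mathring\gamma) = \mathring\gamma'\,\Delta(H^2) + 2\mathring\gamma''|\nabla H|^2 \cdot (2H^2)$ — more precisely one must be careful that $\mathring\gamma$ is a function of $H^2$, so $\Delta\mathring\gamma = \mathring\gamma'\Delta(H^2) + \mathring\gamma''|\nabla(H^2)|^2 = \mathring\gamma'\Delta(H^2) + 4H^2\mathring\gamma''|\nabla H|^2$.

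Next I would expand $\dt f_\sigma$ and $\Delta f_\sigma$ using the product/quotient rule applied to $u = \normho^2$ and $v = \mathring\gamma^{1-\sigma}$. Writing $f_\sigma = u\,v^{-1}$, one has $\dt f_\sigma - \Delta f_\sigma = v^{-1}(\dt u - \Delta u) - u v^{-2}(\dt v - \Delta v) + 2 v^{-2}\langle\nabla u, \nabla v\rangle - 2 u v^{-3}|\nabla v|^2$; equivalently, the cleanest route is the standard identity $\dt f_\sigma - \Delta f_\sigma = v^{-1}(\dt u - \Delta u) - f_\sigma v^{-1}(\dt v - \Delta v) + 2 v^{-1}\langle \nabla f_\sigma, \nabla v\rangle / \,$(something) — I would in fact use the Bochner-type reformulation $\Delta f_\sigma = v^{-1}\Delta u - f_\sigma v^{-1}\Delta v + 2 v^{-1}\langle\nabla(\log v), \nabla(\text{?})\rangle$ and track terms so that the reaction terms become, after substituting the evolution equations and cancelling the Laplacians, $2\sigma(|h|^2 - nc) f_\sigma$ plus a collection of gradient terms. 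The $2\sigma(|h|^2-nc)f_\sigma$ term should arise precisely as the difference $(1) \cdot \frac{2\normho^2(|h|^2-nc)}{\mathring\gamma^{1-\sigma}} - (1-\sigma)\cdot\frac{2H^2(|h|^2+nc)\mathring\gamma' \normho^2}{\mathring\gamma^{2-\sigma}}$ combined with the identity relating $(\gamma+nc)H^2\gamma'$ to $2cH^2 + \gamma^2 - nc\gamma$ from Lemma \ref{app}(ii) — this is the step where the special structure of $\gamma$ enters, and it is essentially the point at which the "equality case for $x \geq x_0$" of Lemma \ref{app}(ii) is exploited.

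The remaining work is to absorb the genuinely new gradient terms. After the cancellations there will be terms of the shape $\frac{c_1}{\mathring\gamma^{1-\sigma}}|\nabla H|^2$ (from the $\tfrac2n|\nabla H|^2$ and $-2|\nabla h|^2$ of Lemma \ref{evo}(v), combined with Lemma \ref{dA2}(i)), a cross term $\frac{c_2 \normho^2}{\mathring\gamma^{2-\sigma}}|\nabla H|^2$ coming from $|\nabla v|^2$ and from $\Delta v$, and a genuine cross term $v^{-2}\langle\nabla\normho^2,\nabla\mathring\gamma\rangle$. I would rewrite $\langle\nabla\normho^2,\nabla\mathring\gamma\rangle = \mathring\gamma' \langle\nabla\normho^2, \nabla(H^2)\rangle = 2H\mathring\gamma'\langle\nabla\normho^2,\nabla H\rangle$ and express $\nabla\normho^2$ in terms of $\nabla f_\sigma$ via $\nabla\normho^2 = \mathring\gamma^{1-\sigma}\nabla f_\sigma + (1-\sigma)f_\sigma\mathring\gamma^{-\sigma}\nabla\mathring\gamma$, so that the cross term splits into a piece proportional to $\langle\nabla f_\sigma, \nabla H\rangle$ (giving the $\frac{2C_1}{\normho}|\nabla f_\sigma||\nabla H|$ term after Cauchy–Schwarz and the bound $|\mathring\gamma'|, \mathring\gamma \leq C(H^2+c)$, $H \leq C\normho^{-1}\cdot(\dots)$ — actually one uses $\normho^2 < \mathring\gamma < C(H^2+c)$ so that $H/\mathring\gamma^{1-\sigma} \lesssim \normho^{-1}$ up to constants) and a piece that combines with the $|\nabla H|^2$ terms. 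The main obstacle is bookkeeping: one must verify that, when $\sigma$ is chosen small (and $\varepsilon$ is the fixed pinching gap), the net coefficient of $|\nabla H|^2$ is negative and at least $\frac{2\varepsilon f_\sigma}{C_1\normho^2}$ in magnitude; this requires that the "bad" $|\nabla H|^2$ contributions (which are $O(\sigma)$, since for $\sigma = 0$ Lemma \ref{app}(i) gives exactly the borderline coefficient $\tfrac{3}{n+2}$ and the $\varepsilon\omega$-correction from Theorem \ref{pinch}/Lemma \ref{wpp} supplies the strict gap) do not overwhelm the $-\frac{2\varepsilon f_\sigma}{\normho^2 \mathring\gamma^{?}}|\nabla H|^2$ surplus. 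Concretely I expect to show the coefficient of $|\nabla H|^2$ equals $-\varepsilon\cdot(\text{positive, bounded below}) + \sigma\cdot(\text{bounded})$ times $\normho^{-2}$-type factors, so choosing $\sigma$ small relative to $\varepsilon$ closes it; the constant $C_1$ then depends only on $n, c$ through the uniform bounds on $\gamma, \gamma', \gamma'', \omega, \omega', \omega''$ and the inequalities in Lemmas \ref{app} and \ref{wpp}.
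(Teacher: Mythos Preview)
Your overall framework is the same as the paper's: compute $(\partial_t - \Delta)f_\sigma$ via the quotient rule for $f_\sigma = \normho^2/\mathring\gamma^{1-\sigma}$, substitute the evolution equations from Lemma~\ref{evo}, and then bound the first-order, $|\nabla H|^2$, and zeroth-order pieces separately using Lemmas~\ref{dA2} and~\ref{app}. That skeleton is correct.

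There are, however, two concrete misconceptions that would derail the execution.

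\textbf{The $|\nabla H|^2$ coefficient.} You expect the coefficient to have the shape $-\varepsilon\cdot(\text{positive}) + \sigma\cdot(\text{bounded})$, to be closed by taking $\sigma$ small relative to~$\varepsilon$. This is wrong, and would in any case make $C_1$ depend on $\sigma$ and $\varepsilon$, contradicting the lemma statement. In fact the $\sigma$-dependence only \emph{helps}: after using $|\nabla h|^2 \geqslant \tfrac{3}{n+2}|\nabla H|^2$ and Lemma~\ref{app}(i) (which gives $2H^2\mathring\gamma'' + \mathring\gamma' \leqslant \tfrac{2(n-1)}{n(n+2)}$), the coefficient is at most
\[
\frac{2(n-1)}{n(n+2)}\left(\frac{1-\sigma}{\mathring\gamma} - \frac{1}{\normho^2}\right)
\leqslant \frac{2(n-1)}{n(n+2)}\left(\frac{1}{\mathring\gamma} - \frac{1}{\normho^2}\right)
\leqslant -\,\frac{2(n-1)}{n(n+2)}\cdot\frac{\varepsilon\,\omega}{\mathring\gamma\,\normho^2},
\]
the last step using the preserved pinching $\normho^2 < \mathring\gamma - \varepsilon\omega$ from Theorem~\ref{pinch}. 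This is negative for \emph{every} $\sigma\in(0,1)$, and the constant comes from bounding $\omega/\mathring\gamma$ below --- no interplay with $\sigma$ is needed. (Lemma~\ref{wpp} plays no role here; the $\omega$ enters only through the pinching inequality.)

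\textbf{The zeroth-order term.} You invoke only Lemma~\ref{app}(ii), and only its equality case. The paper uses the full inequality of Lemma~\ref{app}(ii) \emph{together with} Lemma~\ref{app}(iii). One first rewrites
\[
(|h|^2+nc)\Bigl(1 - \tfrac{H^2\mathring\gamma'}{\mathring\gamma}\Bigr)
\leqslant \frac{(\gamma+nc)(\gamma - H^2\gamma')}{\mathring\gamma},
\]
using $|h|^2 < \gamma$ and the identity $\mathring\gamma - H^2\mathring\gamma' = \gamma - H^2\gamma'$; positivity of the numerator requires Lemma~\ref{app}(iii). Then Lemma~\ref{app}(ii) gives $(\gamma+nc)(\gamma - H^2\gamma') \leqslant 2nc\,\mathring\gamma$, which is exactly what collapses the zeroth-order bracket to $\sigma(|h|^2 - nc)$. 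Without (iii) you cannot control the sign when replacing $|h|^2$ by $\gamma$.

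Finally, your plan for the cross term is more circuitous than necessary: rather than expressing $\nabla\normho^2$ back in terms of $\nabla f_\sigma$, the paper simply bounds $|\nabla\mathring\gamma|/\mathring\gamma = 2|H||\mathring\gamma'||\nabla H|/\mathring\gamma \leqslant C_1|\nabla H|/\normho$ directly, using $\normho^2 < \mathring\gamma$ and the boundedness of $2|H||\mathring\gamma'|/\sqrt{\mathring\gamma}$.
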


\begin{proof}
  By a straightforward calculation, we have
  \[ \dt f_{\sigma} =f_{\sigma} \left( \frac{\dt \normho^{2}}{\normho^{2}} - (
     1- \sigma ) \frac{\dt \mathring{\gamma}}{\mathring{\gamma}} \right) . \]
  The gradient of $f_{\sigma}$ can be written as
  \[ \nabla f_{\sigma} =f_{\sigma} \left( \frac{\nabla
     \normho^{2}}{\normho^{2}} - ( 1- \sigma ) \frac{\nabla
     \mathring{\gamma}}{\mathring{\gamma}} \right) . \]
  The Laplacian of $f_{\sigma}$ is given by
  \begin{equation}
    \Delta f_{\sigma} =f_{\sigma} \left( \frac{\Delta
    \normho^{2}}{\normho^{2}} - ( 1- \sigma ) \frac{\Delta
    \mathring{\gamma}}{\mathring{\gamma}} \right) -2 ( 1- \sigma )
    \frac{\left\langle \nabla f_{\sigma} , \nabla \mathring{\gamma}
    \right\rangle}{\mathring{\gamma}} + \sigma ( 1- \sigma ) f_{\sigma}
    \frac{\left| \nabla \mathring{\gamma} \right|^{2}}{\left|
    \mathring{\gamma} \right|^{2}} . \label{lapf}
  \end{equation}
  By the evolution equations, we have
  \begin{eqnarray}
    \left( \dt - \Delta \right) f_{\sigma} & = & 2 ( 1- \sigma )
    \frac{\left\langle \nabla f_{\sigma} , \nabla \mathring{\gamma}
    \right\rangle}{\mathring{\gamma}} - \sigma ( 1- \sigma ) f_{\sigma}
    \frac{\left| \nabla \mathring{\gamma} \right|^{2}}{\left|
    \mathring{\gamma} \right|^{2}} \nonumber\\
    &  & + \frac{2 f_{\sigma}}{\normho^{2}} \left( \frac{| \nabla H |^{2}}{n}
    - | \nabla h |^{2} \right) +2 ( 1- \sigma ) f_{\sigma} \frac{2H^{2}
    \mathring{\gamma}'' + \mathring{\gamma}'}{\mathring{\gamma}}   | \nabla H
    |^{2} \nonumber\\
    &  & +2f_{\sigma} \left[ ( | h |^{2} -n c ) - ( 1- \sigma ) ( | h |^{2}
    +n c ) \frac{H^{2}   \mathring{\gamma}'}{\mathring{\gamma}} \right]
    \nonumber\\
    & \leqslant & \frac{2}{\mathring{\gamma}} | \nabla f_{\sigma} | \left|
    \nabla \mathring{\gamma} \right|  \label{dtf}\\
    &  & +2f_{\sigma} \left[   \frac{1}{\normho^{2}} \left( \frac{| \nabla H
    |^{2}}{n} - | \nabla h |^{2} \right) + ( 1- \sigma ) \frac{2H^{2}
    \mathring{\gamma}'' + \mathring{\gamma}'}{\mathring{\gamma}} | \nabla H
    |^{2} \right] \nonumber\\
    &  & +2f_{\sigma} \left[ \sigma ( | h |^{2} +n c ) + ( 1- \sigma ) ( | h
    |^{2} +n c ) \left( 1- \frac{H^{2}
    \mathring{\gamma}'}{\mathring{\gamma}} \right) -2 n c \right]  . \nonumber
  \end{eqnarray}

  From the definitions of $\mathring{\gamma}$, $\omega$, there exist a
  positive constant $C_{1}$ depending only on $n$, $c$, such that
  \[ \frac{2 | H | \left| \mathring{\gamma}'
     \right|}{\sqrt{\mathring{\gamma}}} <C_{1} \hspace{1em} \tmop{and}
     \hspace{1em} \frac{n ( n+2 )   \mathring{\gamma}}{2 ( n-1 ) \omega}
     <C_{1} . \]
  This together with $\normho^{2} < \mathring{\gamma}$ implies
  \begin{equation}
    \frac{\left| \nabla \mathring{\gamma} \right|}{\mathring{\gamma}} =
    \frac{2 \left| \mathring{\gamma}' \right| | H | | \nabla H
    |}{\mathring{\gamma}} \leqslant C_{1} \frac{| \nabla H |}{\normho} .
    \label{aopao}
  \end{equation}
  Next we estimate the expression in the first square bracket of the right
  hand side of (\ref{dtf}). Lemma \ref{app} (i) implies $2H^{2}
  \mathring{\gamma}'' + \mathring{\gamma}' \leqslant \frac{2 ( n-1 )}{n ( n+2
  )}$. From Lemma \ref{dA2} (i), we have
  \begin{eqnarray*}
    &  & \frac{1}{\normho^{2}} \left( \frac{| \nabla H |^{2}}{n} - | \nabla h
    |^{2} \right) + ( 1- \sigma ) \frac{2H^{2} \mathring{\gamma}'' +
    \mathring{\gamma}'}{\mathring{\gamma}} | \nabla H |^{2}\\
    & \leqslant & \left( \frac{2 ( 1-n )}{n ( n+2 )}   \frac{1}{\normho^{2}}
    + \frac{( 1- \sigma )}{\mathring{\gamma}}   \frac{2 ( n-1 )}{n ( n+2 )}
    \right) | \nabla H |^{2}\\
    & \leqslant & \frac{2 ( n-1 )}{n ( n+2 )} \left(
    \frac{1}{\mathring{\gamma}} - \frac{1}{\normho^{2}} \right) | \nabla H
    |^{2}\\
    & \leqslant & - \frac{2 ( n-1 )}{n ( n+2 )}   \frac{\varepsilon
    \omega}{\mathring{\gamma} \normho^{2}} | \nabla H |^{2}\\
    & \leqslant & - \frac{\varepsilon}{C_{1} \normho^{2}} | \nabla H |^{2} .
  \end{eqnarray*}

  Then we estimate the expression in the second square bracket of the right
  hand side of (\ref{dtf}). From Lemma \ref{app} (iii), we get $( | h |^{2} +n
  c ) \left( 1- \frac{H^{2}   \mathring{\gamma}'}{\mathring{\gamma}} \right)
  \leqslant \frac{1}{\mathring{\gamma}} ( \gamma +n c ) ( \gamma -H^{2}
  \gamma' )$. Lemma \ref{app} (ii) yields $( \gamma +n c ) ( \gamma
  -H^{2} \gamma' ) \leqslant 2n c  \mathring{\gamma}$. Thus we obtain
  \[ \sigma ( | h |^{2} +n c ) + ( 1- \sigma ) ( | h |^{2} +n c ) \left( 1-
     \frac{H^{2}   \mathring{\gamma}'}{\mathring{\gamma}} \right) -2 n c
     \leqslant \sigma ( | h |^{2} -n c ) . \]

  This completes the proof of the Lemma \ref{ptf}.
\end{proof}

To estimate the term $2 \sigma | h |^{2} f_{\sigma}$ in Lemma \ref{dtf}, we
need the following.

\begin{lemma}
  \label{lapa0}If $M$ is an n-dimensional $(n \geqslant 3 )$ hypersurface in
  $\mathbb{F}^{n+1} (c)$ $(c>0)$ which satisfies $\normho^{2} <
  \mathring{\gamma} - \varepsilon   \omega$, then there exists a positive
  constant $C_{2}$ depending only on $n$ and $c$, such that
  \[ \Delta \normho^{2} \geqslant 2 \left\langle \mathring{h} , \nabla^{2} H
     \right\rangle +2 \varepsilon  C_{2} | h |^{2} \normho^{2} . \]
\end{lemma}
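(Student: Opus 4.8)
The plan is to derive a Simons-type lower bound for $\Delta\normho^2$ by combining Lemma \ref{dA2}(ii) with an algebraic estimate on the cubic term $W$. Starting from
\[ \Delta|h|^2 = 2\langle h,\nabla^2 H\rangle + 2|\nabla h|^2 + 2W, \quad W = Hh_i^jh_j^kh_k^i - |h|^4 + nc\normho^2, \]
and using $\Delta\normho^2 = \Delta|h|^2 - \frac{2}{n}|\nabla H|^2 - \frac{2}{n}H\Delta H$ together with $\langle h,\nabla^2 H\rangle = \langle\mathring h,\nabla^2 H\rangle + \frac{1}{n}H\Delta H$, one gets
\[ \Delta\normho^2 = 2\langle\mathring h,\nabla^2 H\rangle + 2|\nabla h|^2 - \tfrac{2}{n}|\nabla H|^2 + 2W. \]
Since $|\nabla h|^2 \ge \tfrac{3}{n+2}|\nabla H|^2 \ge \tfrac{1}{n}|\nabla H|^2$ for $n\ge 3$, the gradient terms are nonnegative and can be dropped. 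So it suffices to show $W \ge \varepsilon C_2 |h|^2\normho^2$ under the pinching hypothesis $\normho^2 < \mathring\gamma - \varepsilon\omega$.

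The heart of the matter is thus a pointwise inequality for $W$ in terms of the principal curvatures. First I would rewrite $W$ in traceless form: writing $\lambda_i = \mathring\lambda_i + \tfrac{H}{n}$ with $\sum\mathring\lambda_i = 0$, a standard computation gives
\[ W = nc\normho^2 - \normho^4 + \tfrac{n-2}{n}H\sum_i\mathring\lambda_i^3 + \tfrac{1}{n}H^2\normho^2 - \tfrac{n-1}{n}\normho^4 \]
(modulo getting the exact coefficients right — the precise form is $W = -\normho^4 + \tfrac{n-2}{\sqrt{n(n-1)}}\cdot\text{something}\cdot H + \dots$; in any case the dangerous terms are the quartic $-\normho^4$ and the cubic $H\sum\mathring\lambda_i^3$). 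The key algebraic fact, going back to Huisken and Okumura, is the sharp bound $\left|\sum_i\mathring\lambda_i^3\right| \le \tfrac{n-2}{\sqrt{n(n-1)}}\normho^3$. Plugging this in yields a lower bound for $W$ of the form
\[ W \ge \normho^2\left[ nc - \tfrac{n-2}{\sqrt{n(n-1)}}|H|\normho - \normho^2 + (\text{lower order}) \right]. \]
Now I invoke the defining property of $\gamma$: Lemma \ref{app}(vi) states exactly
\[ \tfrac{n-2}{\sqrt{n(n-1)}}\sqrt{H^2(\gamma - \tfrac1n H^2)} + \gamma \le \tfrac2n H^2 + nc, \]
i.e. $\tfrac{n-2}{\sqrt{n(n-1)}}|H|\sqrt{\mathring\gamma} + \gamma - \tfrac1n H^2 \le \tfrac1n H^2 + nc$, which rearranges to $nc - \tfrac{n-2}{\sqrt{n(n-1)}}|H|\sqrt{\mathring\gamma} - \mathring\gamma \ge 0$ (using $\gamma = \mathring\gamma + \tfrac1n H^2$ and $|h|^2 = \normho^2 + \tfrac1n H^2$ so that the bracket equals $nc - \tfrac{n-2}{\sqrt{n(n-1)}}|H|\normho - \normho^2$ when $\normho^2 = \mathring\gamma$). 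The strict pinching $\normho^2 < \mathring\gamma - \varepsilon\omega$ should upgrade this to a bound with a definite $\varepsilon$-gap: the bracket is at least $\tfrac{n-2}{\sqrt{n(n-1)}}|H|(\sqrt{\mathring\gamma} - \normho) + (\mathring\gamma - \normho^2) \ge \mathring\gamma - \normho^2 > \varepsilon\omega$, and then one absorbs into $\varepsilon C_2|h|^2$ using that $\omega/|h|^2$ is bounded below (for $H^2 \ge x_0$, from the explicit formula for $\omega$) while separately handling the regime $H^2 < x_0$ where $\mathring\gamma - \normho^2 > \varepsilon\omega$ still forces a quantitative gap.

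The main obstacle I anticipate is twofold. First, getting the cubic-term estimate to interface cleanly with $\gamma$ across \emph{both} branches ($x \ge x_0$ where $\gamma = \alpha$ and $x < x_0$ where $\gamma = \beta$): the identity (vi) holds with $\alpha$ sharply and with $\gamma \le \alpha$ as an inequality, so on the $\beta$-branch one only has the inequality, and one must check the $\varepsilon$-gap survives there — this is presumably why $\omega$ was designed with its particular behavior and why Lemma \ref{wpp} controls $\omega$ on $x \ge x_0$ separately. Second, converting the "$\normho^2$ versus $\mathring\gamma$" gap of size $\varepsilon\omega$ into a gap proportional to $\varepsilon|h|^2\normho^2$ requires that $\omega \gtrsim \normho^2$ (up to the $|h|^2$ factor), which must be read off from the explicit form of $\omega$; near a totally geodesic point ($H, \normho \to 0$) both sides degenerate and one needs the $|h|^2$ on the right to make the inequality scale-correctly. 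I would organize the proof as: (1) the evolution/Laplacian identity for $\normho^2$ reducing to $W \ge \varepsilon C_2|h|^2\normho^2$; (2) the traceless rewriting of $W$ and the Okumura cubic inequality; (3) invoking Lemma \ref{app}(vi) plus the strict pinching to extract the gap; (4) the bookkeeping with $\omega$ and the two curvature regimes to produce the clean final constant $C_2$.
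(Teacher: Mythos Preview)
Your proposal is correct and follows essentially the same route as the paper: reduce to $W \geqslant \varepsilon C_{2} |h|^{2} \normho^{2}$, rewrite $W$ via the traceless principal curvatures, apply the Okumura inequality $\bigl|\sum \mathring{\lambda}_i^3\bigr| \leqslant \tfrac{n-2}{\sqrt{n(n-1)}}\normho^{3}$, and then use Lemma~\ref{app}(vi) together with the strict pinching to produce the gap $W \geqslant \varepsilon\,\omega\,\normho^{2}$.

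The obstacles you anticipate largely evaporate. First, Lemma~\ref{app}(vi) is already stated and proved for $\gamma$ on the entire range $x \geqslant 0$ (it follows from the identity for $\alpha$ and $\gamma \leqslant \alpha$), so no separate treatment of the $\beta$-branch is needed. Second, the conversion from $\varepsilon\omega$ to $\varepsilon C_{2}|h|^{2}$ is a one-line step: set $C_{2} = \inf(\omega/\gamma)$, which is positive since $\omega$ is a positive $C^{2}$ function with $\omega/\gamma \to (n-1)^{-1}$ at infinity, and then use $|h|^{2} < \gamma$ to get $\omega \geqslant C_{2}\gamma > C_{2}|h|^{2}$. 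The exact traceless expansion of $W$ (which you were unsure of) is $W = H\sum \mathring{\lambda}_i^3 + \tfrac{1}{n}H^{2}\normho^{2} - \normho^{4} + nc\,\normho^{2}$.
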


\begin{proof}
  From Lemma \ref{dA2}, we have
  \[ \Delta \normho^{2} =2  \left\langle \mathring{h} , \nabla^{2} H
     \right\rangle +2 | \nabla h |^{2} - \frac{2}{n} | \nabla H |^{2} +2 W
     \geqslant 2  \left\langle \mathring{h} , \nabla^{2} H \right\rangle +2W.
  \]
  Let $\lambda_{i}$ $(1 \leqslant i \leqslant n ) $ be the principal curvatures of $M$. Put
  $\mathring{\lambda}_{i} = \lambda_{i} - \frac{H}{n}$. Then $\sum
  \mathring{\lambda}_{i} =0$, $\sum \mathring{\lambda}_{i}^{2} = \normho^{2}$.
  Thus we have
  \[ h_{i}^{j}  h_{j}^{k}  h_{k}^{i} = \sum_{i} \lambda_{i}^{3} = \sum_{i}
     \mathring{\lambda}_{i}^{3} + \frac{3}{n} H  \normho^{2} + \frac{1}{n^{2}}
     H^{3} . \]
  Using the inequality $\left| \sum_{i} \mathring{\lambda}_{i}^{3} \right|
  \leqslant \frac{n-2}{\sqrt{n ( n-1 )}} \normho^{3}$ (see {\cite{MR0353216}},
  Lemma 2.1), we have
  \begin{eqnarray}
    W & = & H h_{i}^{j}  h_{j}^{k}  h_{k}^{i} - | h |^{4} +n c \normho^{2}
    \nonumber\\
    & \geqslant & -  \frac{n-2}{\sqrt{n ( n-1 )}} | H | \normho^{3} +
    \frac{1}{n} H^{2} \normho^{2} - \normho^{4} +n c \normho^{2} \\
    & \geqslant & \normho^{2} \left( -  \frac{n-2}{\sqrt{n ( n-1 )}} | H |
    \sqrt{\mathring{\gamma}} + \frac{1}{n} H^{2} - \left( \mathring{\gamma} -
    \varepsilon   \omega \right) +n c \right) . \nonumber
  \end{eqnarray}
  Let $C_{2} = \inf ( \omega / \gamma )$. It follows from Lemma \ref{app} (vi)
  that $W \geqslant \varepsilon   \omega \normho^{2} \geqslant \varepsilon
  C_{2} | h |^{2} \normho^{2}$.
\end{proof}

From (\ref{lapf}), (\ref{aopao}) and Lemma \ref{lapa0}, we have
\begin{eqnarray*}
  \Delta f_{\sigma} & \geqslant & f_{\sigma} \frac{\Delta
  \normho^{2}}{\normho^{2}} - ( 1- \sigma ) f_{\sigma} \frac{\Delta
  \mathring{\gamma}}{\mathring{\gamma}} -2 ( 1- \sigma ) \frac{\left\langle
  \nabla f_{\sigma} , \nabla \mathring{\gamma}
  \right\rangle}{\mathring{\gamma}}\\
  & \geqslant & \frac{2 f_{\sigma}}{\normho^{2}}   \left\langle \mathring{h}
  , \nabla^{2} H \right\rangle +2 \varepsilon  C_{2} | h |^{2} f_{\sigma} - (
  1- \sigma ) \frac{f_{\sigma}}{\mathring{\gamma}}   \Delta \mathring{\gamma}
  - \frac{2C_{1}}{\normho} | \nabla f_{\sigma} | | \nabla H | .
\end{eqnarray*}
This is equivalent to
\begin{equation}
  2 \varepsilon  C_{2} | h |^{2} f_{\sigma} \leqslant \Delta f_{\sigma} -
  \frac{2f_{\sigma}}{\normho^{2}}   \left\langle \mathring{h} , \nabla^{2} H
  \right\rangle + \frac{2C_{1}}{\normho} | \nabla f_{\sigma} | | \nabla H | +
  ( 1- \sigma ) \frac{f_{\sigma}}{\mathring{\gamma}}   \Delta
  \mathring{\gamma} . \label{2eC2}
\end{equation}
Multiplying both sides of the above inequality by $f_{\sigma}^{p-1}$, then
integrating them over $M_{t}$, applying the divergence theorem and the
relation $\nabla_{i} \mathring{h}_{j}^{i} = \frac{n-1}{n}   \nabla_{j} H$, we
get
\begin{equation}
  \int_{M_{t}} f_{\sigma}^{p-1} \Delta f_{\sigma} \mathd \mu_{t} =- ( p-1 )
  \int_{M_{t}} f_{\sigma}^{p-2} | \nabla f_{\sigma} |^{2} \mathd \mu_{t}
  \leqslant 0,
\end{equation}
\begin{eqnarray}
  - \int_{M_{t}} \frac{f_{\sigma}^{p}}{\normho^{2}}   \left\langle
  \mathring{h} , \nabla^{2} H \right\rangle \mathd \mu_{t} & = & -
  \int_{M_{t}} \frac{f_{\sigma}^{p-1}}{\mathring{\gamma}^{1- \sigma}}
  \mathring{h}^{i j} \nabla^{2}_{i,j} H  \mathd \mu_{t} \nonumber\\
  & = & \int_{M_{t}} \nabla_{i} \left(
  \frac{f_{\sigma}^{p-1}}{\mathring{\gamma}^{1- \sigma}} \mathring{h}^{i j}
  \right) \nabla_{j} H  \mathd \mu_{t} \nonumber\\
  & = & \int_{M_{t}} \left[ ( p-1 )
  \frac{f_{\sigma}^{p-2}}{\mathring{\gamma}^{1- \sigma}} \mathring{h}^{i j}
  \nabla_{i} f_{\sigma}   \nabla_{j} H \right. \nonumber\\
  &  & \left. - ( 1- \sigma ) \frac{f_{\sigma}^{p-1}}{\mathring{\gamma}^{2-
  \sigma}}   \mathring{h}^{i j}   \nabla_{i} \mathring{\gamma}   \nabla_{j} H+
  \frac{n-1}{n}   \frac{f_{\sigma}^{p-1}}{\mathring{\gamma}^{1- \sigma}} |
  \nabla H |^{2} \right] \mathd \mu_{t} \\
  & \leqslant & \int_{M_{t}} \left[ ( p-1 )
  \frac{f_{\sigma}^{p-2}}{\mathring{\gamma}^{1- \sigma}} \normho | \nabla
  f_{\sigma} | | \nabla H | \right. \nonumber\\
  &  & \left. + \frac{f_{\sigma}^{p-1}}{\mathring{\gamma}^{2- \sigma}}
  \normho \left| \nabla \mathring{\gamma} \right| |   \nabla H | +
  \frac{f_{\sigma}^{p-1}}{\mathring{\gamma}^{1- \sigma}} | \nabla H |^{2}
  \right] \mathd \mu_{t} \nonumber\\
  & \leqslant & \int_{M_{t}} \left[ ( p-1 ) \frac{f_{\sigma}^{p-1}}{\normho}
  | \nabla f_{\sigma} | |   \nabla H | + ( C_{1} +1 )
  \frac{f_{\sigma}^{p}}{\normho^{2}}   |   \nabla H |^{2} \right] \mathd
  \mu_{t} , \nonumber
\end{eqnarray}
and
\begin{eqnarray}
  \int_{M_{t}} \frac{f_{\sigma}^{p}}{\mathring{\gamma}}   \Delta
  \mathring{\gamma} \mathd \mu_{t} & = & - \int_{M_{t}} \left\langle \nabla
  \left( \frac{f_{\sigma}^{p}}{\mathring{\gamma}} \right) , \nabla
  \mathring{\gamma} \right\rangle \mathd \mu_{t} \nonumber\\
  & = & \int_{M_{t}} \left[ -p  \frac{f_{\sigma}^{p-1}}{\mathring{\gamma}}
  \left\langle \nabla f_{\sigma} , \nabla \mathring{\gamma} \right\rangle
  +f_{\sigma}^{p} \left( \frac{\left| \nabla \mathring{\gamma}
  \right|}{\mathring{\gamma}} \right)^{2} \right] \mathd \mu_{t}
  \label{intfglap}\\
  & \leqslant & \int_{M_{t}} \left[ p C_{1}
  \frac{f_{\sigma}^{p-1}}{\normho}   | \nabla f_{\sigma} | |   \nabla H |
  +C_{1}^{2} \frac{f_{\sigma}^{p}}{\normho^{2}} | \nabla H |^{2} \right]
  \mathd \mu_{t} . \nonumber
\end{eqnarray}
Putting (\ref{2eC2})-(\ref{intfglap}) together, we obtain
\begin{equation}
  2 \varepsilon   \int_{M_{t}} | h |^{2} f_{\sigma}^{p} \mathd \mu_{t}
  \leqslant C_{3} \int_{M_{t}} \left[ \frac{p f_{\sigma}^{p-1}}{\normho}   |
  \nabla f_{\sigma} | |   \nabla H |  +  \frac{f_{\sigma}^{p}}{\normho^{2}}
  |   \nabla H |^{2} \right]   \mathd \mu_{t} , \label{nhofps}
\end{equation}
where $C_{3}$ is a positive constant depending on $n$ and $c$.

Using (\ref{nhofps}) and Lemma \ref{ptf}, we make an estimate for the time
derivative of the integral of $f_{\sigma}^{p}$.
\begin{eqnarray}
  \frac{\mathd}{\mathd t} \int_{M_{t}} f_{\sigma}^{p} \mathd \mu_{t} & = & p
  \int_{M_{t}} f_{\sigma}^{p-1} \frac{\partial f_{\sigma}}{\partial t} \mathd
  \mu_{t} - \int_{M_{t}} f_{\sigma}^{p} H^{2} \mathd \mu_{t} \nonumber\\
  & \leqslant & p \int_{M_{t}} \left[ f_{\sigma}^{p-1} \Delta f_{\sigma}  +
  \frac{2C_{1} f_{\sigma}^{p-1}}{\normho} | \nabla f_{\sigma} | | \nabla H |
  \right. \nonumber\\
  &  & \left. - \frac{2 \varepsilon  f_{\sigma}^{p}}{C_{1} \normho^{2}} |
  \nabla H |^{2} + 2 \sigma ( | h |^{2} -n c ) f_{\sigma}^{p} \right] \mathd
  \mu_{t} \nonumber\\
  & \leqslant & p \int_{M_{t}} \left[ - ( p-1 ) f_{\sigma}^{p-2} | \nabla
  f_{\sigma} |^{2} + \frac{2C_{1} f_{\sigma}^{p-1}}{\normho} | \nabla
  f_{\sigma} | | \nabla H | - \frac{2 \varepsilon  f_{\sigma}^{p}}{C_{1}
  \normho^{2}} | \nabla H |^{2} \right.  \label{dtint}\\
  &  & \left. + \frac{C_{3} \sigma  p}{\varepsilon}
  \frac{f_{\sigma}^{p-1}}{\normho} | \nabla f_{\sigma} | |   \nabla H | +
  \frac{C_{3}   \sigma}{\varepsilon}   \frac{f_{\sigma}^{p}}{\normho^{2}}   |
  \nabla H |^{2} -2 \sigma n c f_{\sigma}^{p} \right] \mathd \mu_{t}
  \nonumber\\
  & = & p \int_{M_{t}} f_{\sigma}^{p-2} \left[ - ( p-1 ) | \nabla f_{\sigma}
  |^{2} + \left( 2C_{1} + \frac{C_{3} \sigma  p}{\varepsilon} \right)
  \frac{f_{\sigma}}{\normho} | \nabla f_{\sigma} | | \nabla H | \right.
  \nonumber\\
  &  & \left. - \left( \frac{2 \varepsilon}{C_{1}} - \frac{C_{3}
  \sigma}{\varepsilon} \right) \frac{f_{\sigma}^{2}}{\normho^{2}} | \nabla H
  |^{2} \right] \mathd \mu_{t} -2p \sigma n c  \int_{M_{t}} f_{\sigma}^{p}
  \mathd \mu_{t} . \nonumber
\end{eqnarray}

In the following we show that the $L^{p}$-norm of $f_{\sigma}$ decays
exponentially.

\begin{lemma}
  \label{pnorm}There exists a constant $C_{4}$ depending on $M_{0}$ such that
  for all $p \geqslant 8C_{1}^{3} \varepsilon^{-1}$ and $\sigma \leqslant
  \varepsilon^{2} p^{-1/2}$, we have
  \[ \left( \int_{M_{t}} f_{\sigma}^{p} \mathd \mu_{t} \right)^{\frac{1}{p}}
     <C_{4}   \mathe^{-3 \sigma c t} . \]
\end{lemma}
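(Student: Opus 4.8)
The plan is to feed the differential inequality (\ref{dtint}) for $\tfrac{\mathd}{\mathd t}\int_{M_t}f_\sigma^p\,\mathd\mu_t$ into a Gr\"onwall argument, after first showing that the bracketed integrand appearing there is pointwise non-positive under the stated restrictions on $p$ and $\sigma$. Writing $X=|\nabla f_\sigma|$ and $Y=\tfrac{f_\sigma}{\normho}|\nabla H|$, that bracket is the quadratic form $-(p-1)X^2+bXY-dY^2$ with $b=2C_1+\tfrac{C_3\sigma p}{\varepsilon}$ and $d=\tfrac{2\varepsilon}{C_1}-\tfrac{C_3\sigma}{\varepsilon}$, and it is $\leqslant 0$ for all real $X,Y$ exactly when $d\geqslant 0$ and $b^2\leqslant 4(p-1)d$. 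Before verifying these I shrink $\varepsilon$: replacing $\varepsilon$ by a smaller positive number is admissible because $|h|^2<\gamma-\varepsilon\omega$ still holds, so I may assume $C_1\geqslant 1$ and $\varepsilon$ smaller than an explicit constant depending only on $n$ and $c$ (equivalently on $C_1$ and $C_3$).

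The first step, and essentially the only substantive one, is the discriminant estimate. From $\sigma\leqslant\varepsilon^2p^{-1/2}$ one gets $\tfrac{C_3\sigma p}{\varepsilon}\leqslant C_3\varepsilon\sqrt{p}$ and $\tfrac{C_3\sigma}{\varepsilon}\leqslant C_3\varepsilon p^{-1/2}$; feeding the second bound together with $p\geqslant 8C_1^3\varepsilon^{-1}$ and the smallness of $\varepsilon$ into the definition of $d$ gives $d\geqslant\varepsilon/C_1>0$. For the discriminant, $2ab\leqslant a^2+b^2$ gives $b^2\leqslant 8C_1^2+2C_3^2\varepsilon^2p$, while $4(p-1)d\geqslant 2p\varepsilon/C_1$ as soon as $p\geqslant 2$ (which holds since $p\geqslant 8C_1^3\varepsilon^{-1}\geqslant 8$); the $p$-linear terms then obey $2C_3^2\varepsilon^2p\leqslant p\varepsilon/C_1$ because $\varepsilon$ is small enough that $2C_1C_3^2\varepsilon\leqslant 1$, and the constant terms obey $8C_1^2\leqslant p\varepsilon/C_1$ precisely because $p\geqslant 8C_1^3\varepsilon^{-1}$. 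Hence $b^2\leqslant 4(p-1)d$, the bracket in (\ref{dtint}) is non-positive, and (\ref{dtint}) reduces to
\[ \frac{\mathd}{\mathd t}\int_{M_t}f_\sigma^p\,\mathd\mu_t\leqslant -2pn c\,\sigma\int_{M_t}f_\sigma^p\,\mathd\mu_t. \]

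The rest is routine. Applying Gr\"onwall's inequality to $t\mapsto\mathe^{2pn c\sigma t}\int_{M_t}f_\sigma^p\,\mathd\mu_t$ yields $\int_{M_t}f_\sigma^p\,\mathd\mu_t\leqslant\mathe^{-2pn c\sigma t}\int_{M_0}f_\sigma^p\,\mathd\mu_0$, hence $\left(\int_{M_t}f_\sigma^p\,\mathd\mu_t\right)^{1/p}\leqslant\mathe^{-2n c\sigma t}\left(\int_{M_0}f_\sigma^p\,\mathd\mu_0\right)^{1/p}$. To control the initial factor uniformly in $p$ and $\sigma$, note that $M_0$ is compact, so $\normho^2$ is bounded on it, and that $\mathring{\gamma}=\gamma-\tfrac1nH^2\geqslant 2c$ by Lemma \ref{app}(v); since $\sigma\in(0,1)$ this gives $\mathring{\gamma}^{1-\sigma}\geqslant\min\{2c,1\}$, so $f_\sigma\leqslant K:=(\sup_{M_0}\normho^2)/\min\{2c,1\}$ and therefore $\left(\int_{M_0}f_\sigma^p\,\mathd\mu_0\right)^{1/p}\leqslant K\,\mathrm{Vol}(M_0)^{1/p}\leqslant K\max\{1,\mathrm{Vol}(M_0)\}$. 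Choosing $C_4$ slightly larger than $K\max\{1,\mathrm{Vol}(M_0)\}$, and using $2n c\geqslant 3c$ (so $\mathe^{-2n c\sigma t}\leqslant\mathe^{-3c\sigma t}$) because $n\geqslant 3$, we obtain $\left(\int_{M_t}f_\sigma^p\,\mathd\mu_t\right)^{1/p}<C_4\,\mathe^{-3\sigma c t}$.

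The main obstacle is the discriminant estimate of the first step: it is where the precise shape of the hypotheses $p\geqslant 8C_1^3\varepsilon^{-1}$ and $\sigma\leqslant\varepsilon^2p^{-1/2}$ is forced, since the $\sigma$-induced perturbations of $b$ and $d$ must be shown to grow slowly enough in $p$ to be dominated by the $-(p-1)X^2$ term; the reduction to $\varepsilon$ small (depending only on $n,c$) is harmless but must be invoked. Everything after that — Gr\"onwall and the uniform $L^p$-bound at time zero — is standard.
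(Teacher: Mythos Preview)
Your argument is correct and follows essentially the same route as the paper: you analyze the discriminant of the quadratic form in the bracket of (\ref{dtint}), use $p\geqslant 8C_1^3\varepsilon^{-1}$, $\sigma\leqslant\varepsilon^2 p^{-1/2}$, and smallness of $\varepsilon$ to force it non-positive, then integrate the resulting differential inequality. The only difference is cosmetic --- you supply an explicit uniform-in-$(p,\sigma)$ bound for $\bigl(\int_{M_0}f_\sigma^p\,\mathd\mu_0\bigr)^{1/p}$ via Lemma~\ref{app}(v), whereas the paper simply absorbs this into $C_4$.
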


\begin{proof}
  The expression in the square bracket of the right hand side of (\ref{dtint})
  is a quadratic polynomial. With $\varepsilon$ small enough, its discriminant
  satisfies
  \begin{eqnarray*}
    &  & \left( 2C_{1} + \frac{C_{3} \sigma  p}{\varepsilon} \right)^{2} -4 (
    p-1 ) \left( \frac{2 \varepsilon}{C_{1}} - \frac{C_{3}
    \sigma}{\varepsilon} \right)\\
    & < & 8C_{1}^{2} +2C_{3}^{2} \varepsilon^{2} p-2p  \varepsilon /C_{1}\\
    & = & ( 8C_{1}^{2} -p  \varepsilon /C_{1} ) + ( 2C_{3}^{2}
    \varepsilon^{2} p-p  \varepsilon /C_{1} )\\
    & < & 0.
  \end{eqnarray*}
  Therefore, this quadratic polynomial is non-positive. So we have
  \[ \frac{\mathd}{\mathd t} \int_{M_{t}} f_{\sigma}^{p} \mathd \mu_{t}
     \leqslant -3p \sigma c  \int_{M_{t}} f_{\sigma}^{p} \mathd \mu_{t} . \]
  This implies $\int_{M_{t}} f_{\sigma}^{p} \mathd \mu_{t} \leqslant
  \mathe^{-3p \sigma c t} \int_{M_{0}} f_{\sigma}^{p} \mathd \mu_{0}$.
\end{proof}

Letting $g_{\sigma} =f_{\sigma}   \mathe^{2 \sigma c t}$, we get

\begin{corollary}
  \label{pnorm2}There exist a constant $C_{5}$ depending on $M_{0}$ such that
  for all $r \geqslant 0$, $p \geqslant \max \{ 4 r^{2}   \varepsilon^{-4}
  ,8C_{1}^{3} \varepsilon^{-1} \}$ and $\sigma \leqslant   \frac{1}{2}
  \varepsilon^{2}  p^{-1/2}$, we have
  \[ \left( \int_{M_{t}} | h |^{2 r}  g_{\sigma}^{p} \mathd \mu_{t}
     \right)^{\frac{1}{p}} <C_{5}   \mathe^{- \sigma c t} . \]
\end{corollary}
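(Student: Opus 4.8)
The plan is to derive the estimate from Lemma \ref{pnorm} by absorbing the factor $|h|^{2r}$ using interpolation, at the cost of shrinking $\sigma$ by a factor of two. First I would record the relation $g_\sigma = f_\sigma\,\mathe^{2\sigma ct}$, so that $\int_{M_t} g_\sigma^{p}\,\mathd\mu_t = \mathe^{2p\sigma ct}\int_{M_t} f_\sigma^{p}\,\mathd\mu_t \le \mathe^{-p\sigma ct}\int_{M_0} f_\sigma^{p}\,\mathd\mu_0$ by Lemma \ref{pnorm} (applied with the same $p$; note the hypotheses $p\ge 8C_1^3\varepsilon^{-1}$ and $\sigma \le \tfrac12\varepsilon^2 p^{-1/2}\le \varepsilon^2 p^{-1/2}$ are met). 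Thus $\|g_\sigma\|_{L^p(M_t)} \le C_4'\,\mathe^{-\sigma ct}$ for a constant depending on $M_0$.

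Next I would handle the weight $|h|^{2r}$. Since $\normho^2 < \mathring\gamma < \gamma$ and $\gamma(H^2)$ grows at most linearly (Lemma \ref{app}(v) gives $\gamma < \tfrac{1}{n-1}H^2 + nc$), we have $|h|^2 = \normho^2 + \tfrac1n H^2 < C(H^2 + c)$, so it suffices to control $\int_{M_t}(H^2+c)^r g_\sigma^{p}\,\mathd\mu_t$. The key point is that $f_\sigma = \normho^2/\mathring\gamma^{1-\sigma}$ already carries a compensating power of $\mathring\gamma$: writing $f_\sigma^p = \normho^{2p}\,\mathring\gamma^{-(1-\sigma)p}$ and using $\normho^2 \le \mathring\gamma$ together with $\mathring\gamma \ge 2c$ (from $\gamma > \tfrac{1}{n-1}H^2+2c$, hence $\mathring\gamma = \gamma - \tfrac1n H^2 > \tfrac{1}{n(n-1)}H^2 + 2c \ge 2c$), one gets, for any exponent split, a bound of the form $(H^2+c)^r f_\sigma^{p} \le C\, f_{\sigma'}^{p'}$ with slightly adjusted parameters — more efficiently, I would simply estimate $(H^2+c)^r \le C(\mathring\gamma)^r \le C\,\mathring\gamma^{r}$ and note that $\mathring\gamma^r g_\sigma^p$ does not obviously decay, so instead the clean route is: apply Lemma \ref{pnorm} with the slightly larger rate $\sigma$ replaced by $2\sigma$. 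Concretely, since $2\sigma \le \varepsilon^2 p^{-1/2}$ when $\sigma \le \tfrac12\varepsilon^2 p^{-1/2}$, Lemma \ref{pnorm} gives $\int_{M_t} f_{2\sigma}^{p}\,\mathd\mu_t \le \mathe^{-6p\sigma ct}\int_{M_0} f_{2\sigma}^p\,\mathd\mu_0$. Then $|h|^{2r} g_\sigma^p = |h|^{2r}\,\mathring\gamma^{(1-\sigma)(-p)+(1-2\sigma)(-p)\cdot(-1)}\cdots$ — the honest mechanism is that $g_\sigma^p/f_{2\sigma}^p = \mathring\gamma^{\sigma p}\,\mathe^{2p\sigma ct}$, and since $\mathring\gamma \le C(H^2+c)$ while $|h|^{2r} \le C(H^2+c)^r$, choosing $p \ge 4r^2\varepsilon^{-4}$ forces $\sigma p \ge r/\varepsilon^0\cdot(\text{const})$, i.e. $\sigma p$ is large enough that $\mathring\gamma^{-\sigma p}$ kills $(H^2+c)^{r}$ pointwise up to a constant depending only on $n,c,\varepsilon$. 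This yields $\int_{M_t}|h|^{2r} g_\sigma^p\,\mathd\mu_t \le C\int_{M_t} f_{2\sigma}^p\,\mathd\mu_t \cdot \mathe^{2p\sigma ct} \le C\,\mathe^{-6p\sigma ct}\mathe^{2p\sigma ct}\int_{M_0} f_{2\sigma}^p\,\mathd\mu_0 = C\,\mathe^{-4p\sigma ct}\int_{M_0}f_{2\sigma}^p\,\mathd\mu_0$.

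Taking $p$-th roots gives $\bigl(\int_{M_t}|h|^{2r} g_\sigma^p\,\mathd\mu_t\bigr)^{1/p} \le C_5\,\mathe^{-4\sigma ct} \le C_5\,\mathe^{-\sigma ct}$, with $C_5$ depending on $M_0$ through $\int_{M_0} f_{2\sigma}^p\,\mathd\mu_0$ (finite and bounded in terms of $M_0$ once $p$ is fixed by $r$ and $\varepsilon$). The main obstacle I anticipate is bookkeeping the dependence of constants on $p$ — one must check that the constant absorbing $(H^2+c)^r \mathring\gamma^{-\sigma p}$ does not blow up as $p\to\infty$; this is exactly why the hypothesis $p \ge 4r^2\varepsilon^{-4}$ is calibrated so that $\sigma p \ge \tfrac12\varepsilon^2\sqrt{p} \ge r\cdot(\text{something}\ge 1)$, making $\sup_{s\ge 2c}(s/2c)^{r}(s/2c)^{-\sigma p}$ a bounded quantity. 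Once that elementary one-variable estimate is pinned down, the rest is the direct chain of inequalities above.
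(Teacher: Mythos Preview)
Your overall strategy---absorbing $|h|^{2r}$ by shifting the exponent $\sigma$ and reapplying Lemma~\ref{pnorm}---is the right one, and it is also what the paper does. But your specific choice of shift, $\sigma\mapsto 2\sigma$, does not work, and the reason is the direction of the inequality on $\sigma$.

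You compute (correctly) $g_\sigma^{p}=f_{2\sigma}^{p}\,\mathring\gamma^{-\sigma p}\,\mathe^{2p\sigma ct}$, so that
\[
|h|^{2r}g_\sigma^{p}\leqslant C_6^{\,r}\,\mathring\gamma^{\,r-\sigma p}\,\mathe^{2p\sigma ct}\,f_{2\sigma}^{p},
\]
and then you want $\mathring\gamma^{\,r-\sigma p}$ to be bounded by a constant. That requires $\sigma p\geqslant r$. You write ``$\sigma p\geqslant\tfrac12\varepsilon^{2}\sqrt{p}\geqslant r$'', but the hypothesis is $\sigma\leqslant\tfrac12\varepsilon^{2}p^{-1/2}$, an \emph{upper} bound on $\sigma$, not a lower bound. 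Nothing prevents $\sigma$ from being arbitrarily small, in which case $\sigma p<r$ and $\mathring\gamma^{\,r-\sigma p}$ is unbounded (since $\mathring\gamma$ grows with $H^2$). So the step fails. Moreover, even in the borderline case $\sigma p=r$ the constant you accumulate is $C_6^{\,r}$, which is not uniform in $r$ as the Corollary demands.

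The fix---and this is exactly the paper's argument---is to shift by the \emph{deterministic} increment $r/p$ instead of by $\sigma$. Using $|h|^{2}\leqslant\gamma\leqslant C_{6}\mathring\gamma$ and the identity $\mathring\gamma^{\,r}f_\sigma^{p}=f_{\sigma+r/p}^{\,p}$, one gets
\[
\left(\int_{M_t}|h|^{2r}g_\sigma^{p}\,\mathd\mu_t\right)^{1/p}
\leqslant C_6^{\,r/p}\,\mathe^{2\sigma ct}\left(\int_{M_t} f_{\sigma+r/p}^{\,p}\,\mathd\mu_t\right)^{1/p}.
\]
The condition $p\geqslant 4r^{2}\varepsilon^{-4}$ is calibrated precisely so that $r/p\leqslant\tfrac12\varepsilon^{2}p^{-1/2}$, whence $\sigma+r/p\leqslant\varepsilon^{2}p^{-1/2}$ and Lemma~\ref{pnorm} applies with rate $3(\sigma+r/p)\geqslant 3\sigma$. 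The net exponent is $2\sigma-3(\sigma+r/p)\leqslant-\sigma$, and the prefactor $C_6^{\,r/p}$ is bounded uniformly because $r/p\leqslant\tfrac12\varepsilon^{2}p^{-1/2}$ is itself bounded.
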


\begin{proof}
  Putting $C_{6} = \sup \left( \gamma / \mathring{\gamma} \right)$, we obtain
  \begin{eqnarray*}
    \left( \int_{M_{t}} | h |^{2 r}  g_{\sigma}^{p} \mathd \mu_{t}
    \right)^{\frac{1}{p}} \leqslant \left( \int_{M_{t}} \gamma^{r}
    g_{\sigma}^{p} \mathd \mu_{t} \right)^{\frac{1}{p}} & \leqslant & \left(
    \int_{M_{t}} \left( C_{6}   \mathring{\gamma} \right)^{r} \mathe^{2p
    \sigma c t}  f_{\sigma}^{p} \mathd \mu_{t} \right)^{\frac{1}{p}}\\
    & \leqslant & C_{6}^{r/p}   \mathe^{2 \sigma c t} \left( \int_{M_{t}}
    f_{\sigma + \frac{r}{p}}^{p}   \mathd \mu_{t} \right)^{\frac{1}{p}} .
  \end{eqnarray*}
  With $r/p \leqslant \frac{1}{2} \varepsilon^{2} / \sqrt{p}$ and $\sigma
  +r/p<  \varepsilon^{2}  / \sqrt{p}$, the conclusion follows from Lemma
  \ref{pnorm}.
\end{proof}

From Lemma \ref{ptf}, we have an estimate for the time derivative of
$g_{\sigma}$.
\[ \dt g_{\sigma} \leqslant \Delta g_{\sigma} + \frac{2C_{1}}{\normho} |
   \nabla g_{\sigma} | | \nabla H | - \frac{2 \varepsilon  g_{\sigma}}{C_{1}
   \normho^{2}} | \nabla H |^{2} +2 \sigma | h |^{2} g_{\sigma} . \]
For $k>0$, define $g_{\sigma ,k} = \max \{ g_{\sigma} -k,0 \}$, $A ( k,t ) =
\tmop{supp}  g_{\sigma ,k} = \overline{\{ x \in M_{t}   |  g_{\sigma} ( x ) >k
\}}$. Letting $p \geqslant 8C_{1}^{3} / \varepsilon$, we have
\begin{eqnarray*}
  \frac{\mathd}{\mathd t} \int_{M_{t}} g_{\sigma ,k}^{p} \mathd \mu_{t} &
  \leqslant & p \int_{M_{t}} g_{\sigma ,k}^{p-1} \left[ \Delta g_{\sigma} +
  \tfrac{2C_{1}}{\normho} | \nabla g_{\sigma} | | \nabla H | - \tfrac{2
  \varepsilon g_{\sigma}}{C_{1} \normho^{2}} | \nabla H |^{2} +2 \sigma | h
  |^{2} g_{\sigma} \right] \mathd \mu_{t}\\
  & \leqslant & p \int_{M_{t}}   \left[ - ( p-1 ) g_{\sigma ,k}^{p-2} |
  \nabla g_{\sigma} |^{2} + \tfrac{2C_{1}  g_{\sigma ,k}^{p-1}}{\normho} |
  \nabla g_{\sigma} | | \nabla H | - \tfrac{2 \varepsilon g_{\sigma
  ,k}^{p}}{C_{1} \normho^{2}} | \nabla H |^{2} \right] \mathd \mu_{t}\\
  &  & + 2p \sigma \int_{A ( k,t )} | h |^{2} g_{\sigma}^{p} \mathd \mu_{t}\\
  & \leqslant & - \frac{1}{2} p ( p-1 ) \int_{M_{t}} g_{\sigma ,k}^{p-2} |
  \nabla g_{\sigma} |^{2} \mathd \mu_{t} +2p \sigma \int_{A ( k,t )} | h |^{2}
  g_{\sigma}^{p} \mathd \mu_{t} .
\end{eqnarray*}
Notice that $\frac{1}{2} p ( p-1 ) g_{\sigma ,k}^{p-2} | \nabla g_{\sigma}
|^{2} \geqslant | \nabla g_{\sigma ,k}^{p/2} |^{2}$. Setting $v=g_{\sigma
,k}^{p/2}$, we obtain
\begin{equation}
  \frac{\mathd}{\mathd t} \int_{M_{t}} v^{2} \mathd \mu_{t} + \int_{M_{t}} |
  \nabla v |^{2} \mathd \mu_{t} \leqslant 2p \sigma \int_{A ( k,t )} | h |^{2}
  g_{\sigma}^{p} \mathd \mu_{t} . \label{ineq1}
\end{equation}

The volume of $\tmop{supp}  v$ satisfies $\int_{A ( k,t )} \mathd \mu_{t}
\leqslant \int_{A ( k,t )} g_{\sigma}^{p} k^{-p} \mathd \mu_{t} <C_{5}^{p}
k^{-p}$. It is sufficiently small when $k$ is large. By Theorem 2.1 of
{\cite{MR0365424}}, for the function $u=v^{2 ( n-1 ) / ( n-2 )}$ we have a
Sobolev inequality $\left( \int_{M_{t}} u^{\frac{n}{n-1}} \mathd \mu_{t}
\right)^{\frac{n-1}{n}} \leqslant C_{7} \int_{M_{t}} ( | \nabla u | +u | H | )
\mathd \mu_{t}$, where $C_{7}$ is a positive constant depending only on $n$.
Using H{\"o}lder's inequality, we have
\[ \left( \int_{M_{t}} v^{\frac{2n}{n-2}} \mathd \mu_{t}
   \right)^{\frac{n-2}{n}} \leqslant C_{7} \int_{M_{t}} | \nabla v |^{2}
   \mathd \mu_{t} +C_{7} \left( \int_{A ( k,t )} H^{n} \mathd \mu_{t}
   \right)^{\frac{2}{n}} \left( \int_{M_{t}} v^{\frac{2n}{n-2}} \mathd \mu_{t}
   \right)^{\frac{n-2}{n}} . \]
When $k \geqslant C_{5} ( 2n C_{7} )^{\frac{n}{2p}}$, $p \geqslant n^{2} /
\varepsilon^{4}$ and $\sigma \leqslant \frac{1}{2} \varepsilon^{2} /
\sqrt{p}$, it follows from Corollary \ref{pnorm2} that $\left( \int_{A ( k,t
)} H^{n} \mathd \mu_{t} \right)^{\frac{2}{n}} \leqslant \left( \int_{A ( k,t
)} n^{\frac{n}{2}} | h |^{n} g_{\sigma}^{p} k^{-p} \mathd \mu_{t}
\right)^{\frac{2}{n}} < \frac{1}{2 C_{7}}$. Thus we obtain
\begin{equation}
  \frac{1}{2 C_{7}} \left( \int_{M_{t}} v^{\frac{2n}{n-2}} \mathd \mu_{t}
  \right)^{\frac{n-2}{n}} \leqslant \int_{M_{t}} | \nabla v |^{2} \mathd
  \mu_{t} . \label{vlessdv}
\end{equation}

It follows from (\ref{ineq1}) and (\ref{vlessdv}) that
\begin{equation}
  \frac{\mathd}{\mathd t} \int_{M_{t}} v^{2} \mathd \mu_{t} + \frac{1}{2C_{7}}
  \left( \int_{M_{t}} v^{\frac{2n}{n-2}} \mathd \mu_{t}
  \right)^{\frac{n-2}{n}} \leqslant  2p \sigma \int_{A ( k,t )} | h |^{2}
  g_{\sigma}^{p} \mathd \mu_{t} . \label{dtintv2}
\end{equation}
Let $k \geqslant \sup_{M_{0}}  g_{\sigma}$. Then $v |_{_{t=0}} =0$. For
a fixed time $s$, let $s_{1} \in [ 0,s ]$ be the time when $\int_{M_{t}} v^{2}
\mathd \mu_{t}$ achieves its maximum in $[ 0,s ]$. Integrating (\ref{dtintv2})
over $[ 0,s_{1} ]$ and $[ 0,s ]$ respectively, adding them, we obtain
\begin{eqnarray}
  \underset{t \in [ 0,s ]}{\sup} \int_{M_{t}} v^{2} \mathd \mu_{t} & + &
  \frac{1}{2C_{7}} \int_{0}^{s} \left( \int_{M_{t}} v^{\frac{2n}{n-2}} \mathd
  \mu_{t} \right)^{\frac{n-2}{n}} \mathd t \nonumber\\
  & \leqslant & 4p \sigma \int_{0}^{s} \int_{A ( k,t )} | h |^{2}
  g_{\sigma}^{p} \mathd \mu_{t}   \mathd t.  \label{vineq1}
\end{eqnarray}
Let $\| A ( k ) \|_{s} = \int_{0}^{s} \int_{A ( k,t )} \mathd \mu_{t}   \mathd
t$. For a positive number $r> \frac{n+2}{2}$, let $p \geqslant 4r/
\varepsilon^{4}$, $\sigma \leqslant \frac{1}{2} \varepsilon^{2} / \sqrt{p r}$.
Using H{\"o}lder's inequality and Corollary \ref{pnorm2} , we have
\begin{eqnarray}
  \int_{0}^{s} \int_{A ( k,t )} | h |^{2} g_{\sigma}^{p} \mathd \mu_{t}
  \mathd t & \leqslant & \left( \int_{0}^{s} C_{5}^{p r} \mathe^{-p r  \sigma
  c t} \mathd t \right)^{\frac{1}{r}} \| A ( k ) \|_{s}^{1- \frac{1}{r}}
  \nonumber\\
  & \leqslant & \frac{C_{5}^{p}}{( p r  \sigma  c )^{1/r}} \| A ( k )
  \|_{s}^{1- \frac{1}{r}} .
\end{eqnarray}
For $h>k$, we have $f_{\sigma ,k} >h-k$ on $A ( h,t )$. So
\begin{equation}
  ( h-k )^{p} \| A ( h ) \|_{s} \leqslant \int_{0}^{s} \int_{M_{t}} v^{2}
  \mathd \mu_{t}   \mathd t \leqslant \left( \int_{0}^{s} \int_{M_{t}}
  v^{\frac{2 ( n+2 )}{n}} \mathd \mu_{t}   \mathd t \right)^{\frac{n}{n+2}} \|
  A ( k ) \|_{s}^{\frac{2}{n+2}} .
\end{equation}
We estimate the right hand side of the above inequality as follows.
\begin{eqnarray}
  & &\left( \int_{0}^{s} \int_{M_{t}} v^{\frac{2 ( n+2 )}{n}} \mathd \mu_{t}
  \mathd t \right)^{\frac{n}{n+2}} \nonumber\\
    & \leqslant & \left[ \int_{0}^{s} \left(
  \int_{M_{t}} v^{2} \mathd \mu_{t} \right)^{\frac{2}{n}} \left( \int_{M_{t}}
  v^{\frac{2n}{n-2}} \mathd \mu_{t} \right)^{\frac{n-2}{n}} \mathd t
  \right]^{\frac{n}{n+2}} \nonumber\\
  & \leqslant & \left( \underset{t \in [ 0,s ]}{\sup} \int_{M_{t}} v^{2}
  \mathd \mu_{t} \right)^{\frac{2}{n+2}} \left[ \int_{0}^{s} \left(
  \int_{M_{t}} v^{\frac{2n}{n-2}} \mathd \mu_{t} \right)^{\frac{n-2}{n}}
  \mathd t \right]^{\frac{n}{n+2}}  \label{vineq5}\\
  & \leqslant & C_{8} \cdot \underset{t \in [ 0,s ]}{\sup} \int_{M_{t}} v^{2}
  \mathd \mu_{t} + \frac{C_{8}}{2 C_{7}}   \int_{0}^{s} \left( \int_{M_{t}}
  v^{\frac{2n}{n-2}} \mathd \mu_{t} \right)^{\frac{n-2}{n}} \mathd t.
  \nonumber
\end{eqnarray}
Putting inequalities (\ref{vineq1})-(\ref{vineq5}) together, for $h>k$, we
have
\[ ( h-k )^{p} \| A ( h ) \|_{s} \leqslant   C_{9} \| A ( k ) \|_{s}^{1-
   \frac{1}{r} + \frac{2}{n+2}} , \]
where $C_{9}$ is a positive constant depending on $M_{0}$, $p$, $\sigma$ and
$r$.

By a lemma of {\cite{MR1786735}} (Chapter II, Lemma B.1), there exists a
finite number $k_{1}$, such that $\| A ( k_{1} ) \|_{s} =0$ for all $s$. Hence
we have $g_{\sigma} \leqslant k_{1}$. This completes the proof of Theorem
\ref{sa0h2}.

\section{A gradient estimate}

To compare the mean curvature at different points of $M_{t}$, we need an
estimate for the gradient of mean curvature.

\begin{theorem}
  \label{dH2}For all $\eta \in \left( 0, \frac{1}{n} \right)$, there exists a
  constant $C ( \eta )$ depending on $\eta$ and $M_{0}$, such that
  \[ | \nabla H |^{2} < [ ( \eta  H )^{4} +C ( \eta )^{2} ] \mathe^{- \sigma c
     t} . \]
\end{theorem}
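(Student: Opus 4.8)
The plan is to derive an evolution inequality for the quantity $| \nabla H |^{2}$ and combine it with the auxiliary function $( \eta H )^{4}$ so that the "bad" gradient terms are absorbed, then apply the maximum principle together with the exponential decay of $\normho^{2}$ proved in Theorem~\ref{sa0h2}. First I would record the standard evolution equation for $| \nabla h |^{2}$ along the mean curvature flow (as in Huisken~\cite{MR892052}), which has the schematic form $\left( \dt - \Delta \right) | \nabla h |^{2} = -2 | \nabla^{2} h |^{2} + (\text{curvature})\ast | \nabla h |^{2} + (\text{lower order})$, and contract to get the corresponding equation for $| \nabla H |^{2}$; the key structural fact is that all the dangerous terms carry a factor that vanishes with $\normho$, except for a good negative term $-2 | \nabla^{2} H |^{2}$ and a term of the form $c_{n} | h |^{2} | \nabla H |^{2}$ which is of the "right sign" only after we subtract off something like $(\eta H)^4$.

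Next I would compute $\left( \dt - \Delta \right) ( \eta H )^{4}$ using Lemma~\ref{evo}(iii): since $\dt H = \Delta H + H ( | h |^{2} + n c )$, one gets $\left( \dt - \Delta \right) H^{4} = 4 H^{4} ( | h |^{2} + n c ) - 12 H^{2} | \nabla H |^{2}$. The crucial gain is the coefficient $-12$ on $H^{2} | \nabla H |^{2}$, which for $\eta$ small dominates the positive contribution $c_{n} | h |^{2} | \nabla H |^{2} \le c_{n} \gamma | \nabla H |^{2} \lesssim (\eta H)^{2} | \nabla H |^{2}$ coming from the $| \nabla H |^{2}$ equation, provided $| h |^{2}$ is comparable to $H^{2}$ — which is exactly what the pinching $| h |^{2} < \gamma < \frac{1}{n-1} H^{2} + n c$ gives. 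So I would set $g = | \nabla H |^{2} - ( \eta H )^{4} - C(\eta)^{2}$ (or rather work with $g = (| \nabla H |^{2} - (\eta H)^4) \mathe^{\sigma c t}$ to capture the decay) and show $\left( \dt - \Delta \right) g \le (\text{bounded}) \cdot g + (\text{terms controlled by } \normho^{2} \text{ times } | \nabla h|^2)$, then handle the latter via Young's inequality against the good term $-2 | \nabla^{2} h |^{2}$ and the exponential smallness of $\normho^{2}$ from Theorem~\ref{sa0h2}.

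The main obstacle I anticipate is controlling the cross term where $| \nabla^{2} H |$ (or $| \nabla^{2} h |$) appears multiplied by $\normho$ and $| \nabla H |$: one needs a Kato-type or interpolation inequality to trade $| \nabla^{2} H |^{2}$ against $| \nabla h |^{2}$, and one must be careful that the constant blowing up as $\eta \to 0$ is absorbed into $C(\eta)$ rather than into the coefficient of $| \nabla H |^{4}$. Concretely, the term $\langle \nabla H, \nabla | h |^{2} \rangle \cdot H^{2}$-type contributions, after using $| \nabla | h |^{2} | \lesssim | h | | \nabla h |$ and the pinching, produce something like $\eta^{-1} \normho^{2} | \nabla h |^{2}$, which decays exponentially by Theorem~\ref{sa0h2} and Corollary~\ref{pnorm2}-style $L^p$ bounds; the delicate point is that at the parabolic maximum of $g$ the term $-2 | \nabla^{2} h |^{2}$ need not be available pointwise unless one keeps it from the $| \nabla h |^{2}$ evolution, so I would either keep the full $| \nabla h|^2$ quantity (not just $|\nabla H|^2$) in the barrier, following Huisken's treatment, or use an integral/Stampacchia iteration as in Section~3 rather than a pure pointwise maximum principle. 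Once the differential inequality $\left( \dt - \Delta \right) \bigl[ ( | \nabla H |^{2} - ( \eta H )^{4} ) \mathe^{\sigma c t} \bigr] \le C \cdot \bigl[ \cdots \bigr]$ with a right-hand side that is negative wherever the bracket is large and positive is established, the maximum principle gives $| \nabla H |^{2} - ( \eta H )^{4} \le C(\eta)^{2} \mathe^{- \sigma c t}$, which is the claim.
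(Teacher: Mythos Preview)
Your proposal has a genuine gap: you have misidentified the mechanism that kills the bad gradient term. After computing $\left(\dt - \Delta\right)|\nabla H|^2$ one is left with a term of size $B_1(H^2+c)|\nabla h|^2$ (equivalently $\sim|h|^2|\nabla H|^2$), and neither of your two proposed absorbers works. First, subtracting $(\eta H)^4$ contributes $+12\eta^4 H^2|\nabla H|^2$ to $\left(\dt-\Delta\right)\!\bigl(|\nabla H|^2-(\eta H)^4\bigr)$, i.e.\ the sign is \emph{wrong}; and even if it were right, $\eta$ is small so $\eta^4 H^2$ could never dominate $|h|^2\sim H^2/n$. Second, the good term $-2|\nabla^2 H|^2$ controls second derivatives, not first: there is no Kato or interpolation inequality that lets $|\nabla^2 H|^2$ swallow a term of the form $(H^2+c)|\nabla h|^2$ pointwise.

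The paper's device (which is Huisken's original trick) is to add to the barrier the \emph{zeroth-order} quantities $|\mathring{h}|^2$ and $H^2|\mathring{h}|^2$ with carefully chosen large constant coefficients. Their evolution equations carry the Codazzi gain $-2\bigl(|\nabla h|^2-\tfrac1n|\nabla H|^2\bigr)$, so $\left(\dt-\Delta\right)|\mathring{h}|^2\le -\tfrac89|\nabla h|^2+\dots$ and $\left(\dt-\Delta\right)(H^2|\mathring{h}|^2)\le -\tfrac79 H^2|\nabla h|^2+\dots$; with coefficients $9B_1B_2$ and $7B_1$ these negative gradient terms exactly cancel the bad $B_1(H^2+c)|\nabla h|^2$. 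The \emph{only} role of $-(\eta H)^4$ is then to produce the reaction term $-\tfrac{4}{n}\eta^4 H^6$, which beats the leftover $(H^2+c)^2|\mathring{h}|^2\le C_0(H^2+c)^{3-\sigma}$ coming from Theorem~\ref{sa0h2} for large $H$; the remainder is a bounded function and the maximum principle finishes. Your outline should therefore replace the $|\nabla^2 H|^2$ discussion by the auxiliary function
\[
f=\bigl(|\nabla H|^2+9B_1B_2\,|\mathring{h}|^2+7B_1 H^2|\mathring{h}|^2\bigr)\mathe^{\sigma c t}-(\eta H)^4,
\]
and separate the two jobs: the $|\mathring{h}|^2$-terms absorb gradients, the $(\eta H)^4$-term absorbs reaction.
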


Firstly, we derive an inequality for the time derivative of $| \nabla H |$.

\begin{lemma}
  \label{dtdelH2}There exists a positive constant $B_{1} >1$ depending only on
  $n$, such that
  \[ \dt | \nabla H |^{2} \leqslant \Delta | \nabla H |^{2} +B_{1}   ( H^{2}
     +c ) | \nabla h |^{2} . \]
\end{lemma}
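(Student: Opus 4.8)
The plan is to compute $( \partial_t - \Delta ) | \nabla H |^2$ explicitly from the evolution equation for $H$ in Lemma \ref{evo}~(iii) together with the Bochner identity on $M_t$, and then to absorb every resulting term into $B_1 ( H^2 + c ) | \nabla h |^2$ by invoking the Gauss equation and the preserved pinching from Theorem \ref{pinch}.

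First I would differentiate $| \nabla H |^2 = g^{ij} \nabla_i H \, \nabla_j H$ in time. Since $H$ is a scalar, $\partial_t \nabla_i H = \nabla_i \partial_t H$, and using $\partial_t g^{ij} = 2 H h^{ij}$ (from Lemma \ref{evo}~(i)) together with $\partial_t H = \Delta H + H ( | h |^2 + n c )$ one obtains
\[ \partial_t | \nabla H |^2 = 2 H h^{ij} \nabla_i H \, \nabla_j H + 2 \langle \nabla H , \nabla \Delta H \rangle + 2 ( | h |^2 + n c ) | \nabla H |^2 + 2 H \langle \nabla H , \nabla | h |^2 \rangle . \]
The Bochner identity gives $\Delta | \nabla H |^2 = 2 | \nabla^2 H |^2 + 2 \langle \nabla H , \nabla \Delta H \rangle + 2 \, \mathrm{Ric} ( \nabla H , \nabla H )$, where $\mathrm{Ric}$ is the Ricci tensor of $M_t$. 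Subtracting, the terms $\langle \nabla H , \nabla \Delta H \rangle$ cancel, leaving
\[ ( \partial_t - \Delta ) | \nabla H |^2 = - 2 | \nabla^2 H |^2 + 2 H h^{ij} \nabla_i H \, \nabla_j H - 2 \, \mathrm{Ric} ( \nabla H , \nabla H ) + 2 ( | h |^2 + n c ) | \nabla H |^2 + 2 H \langle \nabla H , \nabla | h |^2 \rangle . \]

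Next I would substitute the Gauss equation $R_{ij} = ( n - 1 ) c \, g_{ij} + H h_{ij} - h_i^k h_{kj}$. The term $2 H h^{ij} \nabla_i H \, \nabla_j H$ cancels exactly against its counterpart inside $- 2 \, \mathrm{Ric} ( \nabla H , \nabla H )$, so what remains from those two is $- 2 ( n - 1 ) c | \nabla H |^2 + 2 h_i^k h_{kj} \nabla^i H \, \nabla^j H$. Discarding the non-positive terms $- 2 | \nabla^2 H |^2$ and $- 2 ( n - 1 ) c | \nabla H |^2$, I am reduced to bounding
\[ ( \partial_t - \Delta ) | \nabla H |^2 \le 2 h_i^k h_{kj} \nabla^i H \, \nabla^j H + 2 ( | h |^2 + n c ) | \nabla H |^2 + 2 H \langle \nabla H , \nabla | h |^2 \rangle . \]
Each surviving term is then estimated by $C ( n ) ( H^2 + c ) | \nabla h |^2$: diagonalizing $h$ gives $h_i^k h_{kj} \nabla^i H \, \nabla^j H \le | h |^2 | \nabla H |^2$; the preserved pinching together with Lemma \ref{app}~(v) yields $| h |^2 < \gamma ( H^2 ) < \tfrac{1}{n-1} H^2 + n c \le n ( H^2 + c )$; Lemma \ref{dA2}~(i) gives $| \nabla H |^2 \le \tfrac{n+2}{3} | \nabla h |^2$; and $| \nabla | h |^2 | \le 2 | h | \, | \nabla h |$, so the cross term satisfies $2 | H | \, | \nabla H | \, | \nabla | h |^2 | \le C ( n ) | H | \, | h | \, | \nabla h |^2 \le C ( n ) ( H^2 + | h |^2 ) | \nabla h |^2 \le C ( n ) ( H^2 + c ) | \nabla h |^2$. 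Collecting the constants and enlarging the result if necessary so that $B_1 > 1$ completes the argument.

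I expect the only delicate point to be verifying the exact cancellation of the two $H h^{ij} \nabla_i H \, \nabla_j H$ terms and the sign in front of $( n - 1 ) c | \nabla H |^2$ under the sign conventions used for the second fundamental form, the Ricci tensor and the Laplacian; everything else is a routine chain of Cauchy--Schwarz and Young estimates together with the pinching bound on $| h |^2$.
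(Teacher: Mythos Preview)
Your argument is correct and essentially the same as the paper's. The only cosmetic difference is that the paper computes the commutator $\nabla\Delta H-\Delta\nabla H$ directly from the Gauss equation, whereas you package the same commutator via the Bochner formula and then substitute $R_{ij}=(n-1)c\,g_{ij}+Hh_{ij}-h_i^k h_{kj}$; both routes give the identical evolution equation $(\partial_t-\Delta)|\nabla H|^2=-2|\nabla^2H|^2+2(|h|^2+c)|\nabla H|^2+2H\langle\nabla|h|^2,\nabla H\rangle+2h^{ij}h_i^k\nabla_jH\,\nabla_kH$, after which the Cauchy--Schwarz and pinching estimates you give are exactly those used in the paper.
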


\begin{proof}
  From the evolution equation of $H$, we have
  \begin{eqnarray*}
    \dt \nabla_{i} H & = & \nabla_{i} \left( \dt H \right)\\
    & = & \nabla_{i} ( \Delta H+H ( | h |^{2} +n c ) )\\
    & = & \nabla_{i} \Delta H+ \nabla_{i} H  ( | h |^{2} +n c ) +H \nabla_{i}
    | h |^{2} .
  \end{eqnarray*}
  Lemma \ref{evo} (i) implies $\dt g^{i j} =2 H h^{i j}$. Thus we have
  \begin{eqnarray}
    \dt | \nabla H |^{2} & = & \dt ( g^{i j} \nabla_{i} H  \nabla_{j} H )
    \nonumber\\
    & = & 2 \langle \nabla \Delta H, \nabla H \rangle +2 | \nabla H |^{2}   (
    | h |^{2} +n c )  \label{dtdelHnorm}\\
    &  & +2 H \langle \nabla | h |^{2} , \nabla H \rangle  +2 H h^{i j}
    \nabla_{i} H  \nabla_{j} H. \nonumber
  \end{eqnarray}
  The Laplacian of $| \nabla H |^{2}$ is given by
  \begin{equation}
    \Delta | \nabla H |^{2} =2  \langle \Delta \nabla H, \nabla H \rangle +2
    | \nabla^{2} H |^{2} . \label{lapdelHnorm}
  \end{equation}
  From the Gauss equation, we get
  \begin{equation}
    \nabla \Delta H- \Delta \nabla H= ( 1-n ) c  \nabla H+h^{j k} h_{i j}
    \nabla_{k} H  \mathd x^{i} -H h_{i}^{k} \nabla_{k} H  \mathd x^{i} .
    \label{gaussH}
  \end{equation}
  Combining (\ref{dtdelHnorm}), (\ref{lapdelHnorm}) and (\ref{gaussH}), we
  obtain the evolution equation of $| \nabla H |^{2}$.
  \begin{eqnarray}
    \dt | \nabla H |^{2} & = & \Delta | \nabla H |^{2} -2  | \nabla^{2} H
    |^{2} +2 | \nabla H |^{2}   ( | h |^{2} +c  ) \nonumber\\
    &  & +2 H \langle \nabla | h |^{2} , \nabla H \rangle +2 h^{i j}
    h_{i}^{k}   \nabla_{j} H  \nabla_{k} H.
  \end{eqnarray}

  Using the Cauchy-Schwarz inequality, we have $H \langle \nabla | h |^{2} ,
  \nabla H \rangle \leqslant 2 | H | | h | | \nabla h | | \nabla H |$ and
  $h^{i j} h_{i}^{k} \nabla_{j} H \nabla_{k} H \leqslant | h |^{2} | \nabla H
  |^{2}$. Using $| h |^{2} < \gamma$ and $| \nabla H |^{2} \leqslant
  \frac{n+2}{3} | \nabla h |^{2}$, we obtain the conclusion.
\end{proof}

Secondly, we need the following estimates.

\begin{lemma}
  \label{dtH4}Along the mean curvature flow, we have
  \begin{enumerateroman}
    \item $\dt H^{4} \geqslant \Delta H^{4} -7 n H^{2} | \nabla h |^{2} +
    \frac{4}{n} H^{6}$,

    \item $\dt \normho^{2} \leqslant \Delta \normho^{2} - \frac{8}{9} | \nabla
    h |^{2} +H^{2} \normho^{2}$,

    \item $\dt \left( H^{2} \normho^{2} \right) \leqslant \Delta \left( H^{2}
    \normho^{2} \right) - \frac{7}{9}  H^{2} | \nabla h |^{2} +B_{2} | \nabla
    h |^{2} +4n H^{2} ( H^{2} +c ) \normho^{2}$, where $B_{2} >2 c$ is a
    positive constant depending on $M_{0}$.
  \end{enumerateroman}
\end{lemma}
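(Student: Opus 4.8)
The plan is to obtain all three inequalities by differentiating the relevant scalars along the flow, substituting the evolution equations of Lemma~\ref{evo}, and then invoking the Kato-type inequality $|\nabla H|^2\le\tfrac{n+2}{3}|\nabla h|^2$ of Lemma~\ref{dA2}(i), the trivial bound $|h|^2\ge\tfrac1n H^2$, and the pinching $|h|^2<\gamma$ — equivalently $\normho^2<\mathring\gamma$, which by Lemma~\ref{app}(v) even gives $\normho^2<\tfrac{H^2}{n(n-1)}+nc$. Parts (i) and (ii) are short; part (iii) is the substantial one.

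For (i): from $\dt H=\Delta H+H(|h|^2+nc)$ and $\Delta H^4=4H^3\Delta H+12H^2|\nabla H|^2$ one finds $(\dt-\Delta)H^4=-12H^2|\nabla H|^2+4H^4(|h|^2+nc)$; discarding $4ncH^4\ge0$, using $4H^4|h|^2\ge\tfrac4n H^6$, and using $12|\nabla H|^2\le4(n+2)|\nabla h|^2\le7n|\nabla h|^2$ (valid since $4(n+2)\le7n$ for $n\ge3$) gives (i). For (ii): Lemma~\ref{evo}(v) reads $(\dt-\Delta)\normho^2=-2|\nabla h|^2+\tfrac2n|\nabla H|^2+2\normho^2(|h|^2-nc)$; the Kato inequality turns the first two terms into $-\tfrac{4(n-1)}{3n}|\nabla h|^2\le-\tfrac89|\nabla h|^2$ (the coefficient is increasing in $n$), and $|h|^2<\gamma<\tfrac{H^2}{n-1}+nc\le\tfrac12H^2+nc$ (Lemma~\ref{app}(v), $n\ge3$) gives $2\normho^2(|h|^2-nc)<H^2\normho^2$, which also holds trivially when $|h|^2\le nc$. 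This proves (ii).

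For (iii): with $u=H^2$, $v=\normho^2$ the scalar identity $(\dt-\Delta)(uv)=v(\dt-\Delta)u+u(\dt-\Delta)v-2\langle\nabla u,\nabla v\rangle$ together with $(\dt-\Delta)H^2=-2|\nabla H|^2+2H^2(|h|^2+nc)$ shows that the zeroth-order terms collapse to $4H^2|h|^2\normho^2\le4nH^2(H^2+c)\normho^2$ (using $|h|^2<\gamma<n(H^2+c)$), which is exactly the last term of (iii). The gradient terms are
\[
  -2\normho^2|\nabla H|^2+\tfrac2n H^2|\nabla H|^2-2H^2|\nabla h|^2-4H\langle\nabla H,\nabla\normho^2\rangle .
\]
Here $\tfrac2n H^2|\nabla H|^2\le\tfrac{2(n+2)}{3n}H^2|\nabla h|^2$, so together with $-2H^2|\nabla h|^2$ this contributes at most $-\tfrac89H^2|\nabla h|^2$, leaving a slack of $\tfrac19H^2|\nabla h|^2$. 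The remaining task is to bound $-2\normho^2|\nabla H|^2-4H\langle\nabla H,\nabla\normho^2\rangle$ by $\tfrac19H^2|\nabla h|^2+B_2|\nabla h|^2$: writing $\nabla_k\normho^2=2\mathring h^{ij}\nabla_k\mathring h_{ij}$, so that $|\nabla\normho^2|\le2\normho|\nabla\mathring h|\le2\normho|\nabla h|$, one completes a square against the negative term $-2\normho^2|\nabla H|^2$ and a portion of $-2H^2|\nabla h|^2$; the correction is of order $\normho^2|\nabla H|^2\le\normho^2\tfrac{n+2}{3}|\nabla h|^2$, and the pinching bound $\normho^2<\tfrac{H^2}{n(n-1)}+nc$ then splits it into a controlled $H^2|\nabla h|^2$ piece (absorbed into the slack) plus a $c|\nabla h|^2$-type piece swept into a constant $B_2=B_2(n,c,M_0)>2c$. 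If this is not sharp enough on its own one invokes the decay estimate $\normho^2\le C_0(H^2+c)^{1-\sigma}\mathe^{-2\sigma ct}$ of Theorem~\ref{sa0h2} to make $\normho$ genuinely small in the offending terms. Collecting everything yields (iii).

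The main obstacle is precisely this cross term $-4H\langle\nabla H,\nabla\normho^2\rangle$ in (iii). A naive Cauchy–Schwarz bound gives $8|H|\normho|\nabla H||\nabla h|$, which carries $H^2|\nabla h|^2$-growth with a coefficient far larger than $\tfrac19$, so it cannot simply be swallowed by $B_2|\nabla h|^2$ ($B_2$ being a genuine constant while $H$ may blow up). The estimate must instead be organised so that the cross term is played off against the negative terms $-2\normho^2|\nabla H|^2$ and $-2H^2|\nabla h|^2$ already present, using the pinching (and, if necessary, Theorem~\ref{sa0h2}) to keep the net coefficient of $H^2|\nabla h|^2$ at $-\tfrac79$ while confining all $c$- and lower-order terms to $B_2$; getting these bookkeeping constants right is the delicate point.
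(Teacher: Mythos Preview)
Your arguments for (i) and (ii) are correct and coincide with the paper's. For (iii) you have correctly identified the evolution equation, the zeroth-order bound $4H^{2}|h|^{2}\normho^{2}\le 4nH^{2}(H^{2}+c)\normho^{2}$, and the fact that $\tfrac{2}{n}H^{2}|\nabla H|^{2}-2H^{2}|\nabla h|^{2}\le -\tfrac{8}{9}H^{2}|\nabla h|^{2}$, leaving exactly the slack $\tfrac{1}{9}H^{2}|\nabla h|^{2}$ for the cross term. That is also how the paper organises the computation.

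The one genuine gap is your primary plan for the cross term. Completing the square against $-2\normho^{2}|\nabla H|^{2}$ and a sliver of $H^{2}|\nabla h|^{2}$, and then feeding in only the pinching $\normho^{2}<\tfrac{H^{2}}{n(n-1)}+nc$, does \emph{not} close: Young's inequality gives, for any $\epsilon>0$,
\[
8|H|\normho\,|\nabla H||\nabla h|\le \epsilon H^{2}|\nabla h|^{2}+\tfrac{16}{\epsilon}\normho^{2}|\nabla H|^{2},
\]
and after substituting $\normho^{2}<\tfrac{H^{2}}{n(n-1)}+nc$ and $|\nabla H|^{2}\le\tfrac{n+2}{3}|\nabla h|^{2}$ the resulting coefficient of $H^{2}|\nabla h|^{2}$ is at least $2\sqrt{\tfrac{16(n+2)}{3n(n-1)}}$, which for $n=3$ exceeds $4$ --- orders of magnitude above the available $\tfrac{1}{9}$. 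The term $-2\normho^{2}|\nabla H|^{2}$ is far too small to rescue this. So your ``if this is not sharp enough'' clause is not a fallback but the only viable route: the paper simply discards $-2\normho^{2}|\nabla H|^{2}\le 0$, invokes Theorem~\ref{sa0h2} to get $\normho\le\sqrt{C_{0}}(H^{2}+c)^{(1-\sigma)/2}$, and then bounds
\[
-4H\langle\nabla H,\nabla\normho^{2}\rangle\le 8\sqrt{\tfrac{n+2}{3}C_{0}}\,|H|(H^{2}+c)^{\frac{1-\sigma}{2}}|\nabla h|^{2}\le\Bigl(B_{2}+\tfrac{1}{9}H^{2}\Bigr)|\nabla h|^{2},
\]
the last step being Young's inequality on $|H|(H^{2}+c)^{(1-\sigma)/2}$, which is legitimate precisely because the exponent $2-\sigma<2$. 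This is why $B_{2}$ depends on $M_{0}$ (through $C_{0}$ and $\sigma$) and not merely on $n,c$. Commit to this line; the rest of your write-up is fine.
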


\begin{proof}

  (i) We derive that
  \[ \dt H^{4} = \Delta H^{4} -12 H^{2} | \nabla H |^{2} +4 H^{4} ( | h |^{2}
     +n c ) . \]
  This together with $| h |^{2} \geqslant \frac{1}{n} H^{2}$ and $| \nabla H
  |^{2} \leqslant \frac{n+2}{3} | \nabla h |^{2}$ implies inequality (i).

  (ii) The evolution equation of $\normho^{2}$ is given by
  \[ \dt \normho^{2} = \Delta \normho^{2} -2  | \nabla h |^{2} + \frac{2}{n} |
     \nabla H |^{2} +2 \normho^{2} ( | h |^{2} -n c ) . \]
  Using $\frac{1}{n} | \nabla H |^{2} \leqslant \frac{n+2}{3n} | \nabla
  h |^{2} \leqslant \frac{5}{9} | \nabla h |^{2}$ and $| h |^{2} -n c< \gamma
  -n c \leqslant \frac{1}{2} H^{2}$, we get inequality (ii).

  (iii) It follows from the evolution equations that
  \begin{eqnarray*}
    \dt \left( H^{2} \normho^{2} \right) & = & \Delta \left( H^{2} \normho^{2}
    \right) +4 H^{2} | h |^{2} \normho^{2} -2 H^{2} \left( | \nabla h |^{2} -
    \frac{1}{n} | \nabla H |^{2} \right)\\
    &  & -2 \normho^{2} | \nabla H |^{2} -4 H \left\langle \nabla H, \nabla
    \normho^{2} \right\rangle .
  \end{eqnarray*}
  We use $\frac{1}{n} | \nabla H |^{2} \leqslant \frac{5}{9} | \nabla h |^{2}$
  again. From $| h |^{2} < \gamma$, we have $4H^{2} | h |^{2} \normho^{2}
  \leqslant 4n H^{2} ( H^{2} +c ) \normho^{2}$. From the formula $\nabla_{i}
  \normho^{2} =2 \mathring{h}^{j k}   \nabla_{i} h_{j k}$ and Young's
  inequality, we get
  \begin{eqnarray*}
    -4 H \left\langle \nabla H, \nabla \normho^{2} \right\rangle & \leqslant &
    8  | H |   | \nabla H |   \normho   | \nabla h |\\
    & \leqslant & 8 \sqrt{\tfrac{n+2}{3} C_{0}}   | H | ( H^{2} + c
    )^{\frac{1- \sigma}{2}} | \nabla h |^{2}  \\
    & \leqslant & \left( B_{2} + \frac{1}{9}  H^{2} \right) | \nabla h |^{2}
    .
  \end{eqnarray*}
  This proves inequality (iii).
\end{proof}

\noindent\textit{Proof of Theorem \ref{dH2}.}
Define the following scalar
\[ f= \left( | \nabla H |^{2} +9B_{1} B_{2}   \normho^{2} +7B_{1}  H^{2}
   \normho^{2} \right) \mathe^{\sigma c t} - ( \eta  H )^{4} , \hspace{1em}
   \eta \in \left( 0, \frac{1}{n} \right) . \]
From Lemma \ref{dtdelH2} and \ref{dtH4}, we obtain
\begin{eqnarray*}
  \left( \dt - \Delta \right) f & \leqslant & \left[ B_{1}   ( H^{2} +c ) |
  \nabla h |^{2} +9B_{1} B_{2} \left( - \frac{8}{9} | \nabla h |^{2} +H^{2}
  \normho^{2} \right) \right.\\
  &  & +7B_{1} \left( - \frac{7}{9}  H^{2} | \nabla h |^{2} +B_{2} | \nabla h
  |^{2} +4n H^{2} ( H^{2} +c ) \normho^{2} \right)\\
  &  & \left. + \sigma c \left( | \nabla H |^{2} +9B_{1} B_{2}   \normho^{2}
  +7B_{1}  H^{2} \normho^{2} \right) \right] \mathe^{\sigma c t}\\
  &  & - \eta^{4} \left( -7 n H^{2} | \nabla h |^{2} + \frac{4}{n} H^{6}
  \right)\\
  & \leqslant & \left( - \frac{40}{9} B_{1} \mathe^{\sigma c t} +7n \eta^{4}
  \right) H^{2} | \nabla h |^{2} + [ ( B_{1} c-B_{1} B_{2} ) | \nabla h |^{2}
  + \sigma c | \nabla H |^{2} ] \mathe^{\sigma c t}\\
  &  & +B_{3} ( H^{2} +c )^{2} \normho^{2} \mathe^{\sigma c t} - \frac{4
  \eta^{4}}{n} H^{6} .
\end{eqnarray*}
By Theorem \ref{sa0h2}, we get
\[ \left( \dt - \Delta \right) f \leqslant \left[ C_{0} B_{3} ( H^{2} +c )^{3-
   \sigma} - \frac{4 \eta^{4}}{n} H^{6} \right] \mathe^{- \sigma c t} . \]
We consider the expression in the bracket of the right hand side of the above
inequality, which it is a function of $H$. Since the coefficient of the
highest-degree term is negative, the supremum $C_{2} ( \eta )$ of this
function is finite. Then we have $\dt f< \Delta f+C_{2} ( \eta ) \mathe^{-
\sigma c t}$. It follows from the maximum principle that $f$ is bounded. This
completes the proof of Theorem \ref{dH2}.
\hfill\qedsymbol\\

\section{Convergence Under Sharp Pinching Condition}

In order to estimate the diameter of $M_{t}$, we need a lower bound for the
Ricci curvature.

\begin{lemma}
  \label{ric}Suppose that $M$ is an n-dimensional $(n \geqslant 3 )$
  hypersurface in $\mathbb{F}^{n+1} (c)$ satisfying $|
  h |^{2} < \gamma - \varepsilon   \omega$. Then there exists a positive
  constant $B_{4}$ depending only on $n$, such that for any unit vector $X$ in
  the tangent space, the Ricci curvature of $M$ satisfies
  \[ \tmop{Ric} ( X ) \geqslant B_{4}   \varepsilon   ( H^{2} +c ) . \]
\end{lemma}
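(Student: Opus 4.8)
The plan is to estimate the Ricci curvature of a hypersurface from below using the pinching hypothesis $|h|^2 < \gamma - \varepsilon\omega$ together with the algebraic properties of $\gamma$ collected in Lemma \ref{app}. Fix a point $p\in M$ and a unit tangent vector $X$; after an orthonormal change of basis we may assume $X=e_1$ and that $e_1,\dots,e_n$ diagonalize the second fundamental form, with principal curvatures $\lambda_1,\dots,\lambda_n$. By the Gauss equation, $\tmop{Ric}(X) = (n-1)c + H\lambda_1 - \lambda_1^2$. The task is therefore to bound $(n-1)c + H\lambda_1 - \lambda_1^2$ from below, subject to $\sum_i\lambda_i = H$ and $\sum_i\lambda_i^2 = |h|^2 < \gamma$.

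First I would reduce to an extremal configuration. The quantity $H\lambda_1-\lambda_1^2$ is minimized, for fixed $H$ and $|h|^2$, when $\lambda_1$ is as extreme as possible; by a standard Lagrange-multiplier / convexity argument the worst case is when all the \emph{other} principal curvatures coincide, i.e. $\lambda_2=\dots=\lambda_n =: \mu$. Then $\lambda_1 + (n-1)\mu = H$ and $\lambda_1^2 + (n-1)\mu^2 = |h|^2$, which pins down $\lambda_1 = \frac{H}{n} \pm \frac{n-1}{n}\sqrt{\tfrac{n}{n-1}\bigl(|h|^2 - \tfrac{H^2}{n}\bigr)} = \frac{H}{n}\pm\sqrt{\tfrac{n-1}{n}}\,\normho$ (evaluating with $\normho^2 = |h|^2 - H^2/n$). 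Substituting into $H\lambda_1-\lambda_1^2$, and then into $\tmop{Ric}(X)$, reduces the problem to showing
\[
(n-1)c + \frac{n-1}{n}H^2 \cdot\frac{?}{?} - \frac{1}{n}\Bigl(|h|^2 - \frac{H^2}{n}\Bigr) - \ldots \;\geqslant\; B_4\,\varepsilon\,(H^2+c),
\]
that is, after simplification, an inequality of the shape $(n-1)c + \frac{n-1}{n^2}H^2 - \frac{n-1}{n}\normho^2 - \frac{n-2}{n}|H|\sqrt{\tfrac{n-1}{n}}\,\normho \geqslant B_4\varepsilon(H^2+c)$, where one uses $\normho^2 < \mathring\gamma - \varepsilon\omega = \gamma - \tfrac{1}{n}H^2 - \varepsilon\omega$.

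The point where everything has been arranged is Lemma \ref{app}(vi), which states $\frac{n-2}{\sqrt{n(n-1)}}\sqrt{x(\gamma(x)-\tfrac1n x)} + \gamma(x) \leqslant \tfrac2n x + nc$ with $x=H^2$: this is exactly the borderline estimate guaranteeing $\tmop{Ric}(X)\geqslant 0$ when $|h|^2=\gamma$. To get the \emph{strict} lower bound $B_4\varepsilon(H^2+c)$ one feeds in the slack $\normho^2 < \mathring\gamma - \varepsilon\omega$: replacing $\normho^2$ by $\mathring\gamma - \varepsilon\omega$ in the two $\normho$-dependent terms and using $\sqrt{a-b}\leqslant\sqrt a - \tfrac{b}{2\sqrt a}$ produces a gain proportional to $\varepsilon\omega$ in both places, and then Lemma \ref{app}(vi) absorbs the $\normho^2=\mathring\gamma$ part, leaving $\tmop{Ric}(X)\geqslant c_n\,\varepsilon\,\omega(H^2)$ for an explicit dimensional constant $c_n>0$. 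Finally, from the form of $\omega$ (and the bounds $x\geqslant x_0>0$, $\lim_{x\to\infty}\omega(x)/x$ finite and positive) one checks $\omega(x)\geqslant c_n'(x+c)$, and combined with the $x\le x_0$ range where $\gamma=\beta$ is a polynomial bounded below by a positive multiple of $c$, one gets $\tmop{Ric}(X)\geqslant B_4\varepsilon(H^2+c)$. The main obstacle is the bookkeeping in the reduction to the extremal configuration and verifying that the $\varepsilon$-gain is genuinely comparable to $H^2+c$ uniformly in $H$; the sharp inequality Lemma \ref{app}(vi) does the heavy lifting, so no genuinely new estimate is needed beyond careful case analysis for small versus large $H^2$.
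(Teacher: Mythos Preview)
Your approach is essentially the same as the paper's: both derive the pointwise Ricci lower bound via the extremal eigenvalue configuration (the paper quotes it as Proposition~2 of \cite{MR1458750}), then invoke Lemma~\ref{app}(vi) to see that the borderline $|h|^2=\gamma$ gives $\tmop{Ric}\geqslant 0$, and finally use $\omega(H^2)\geqslant B_4'(H^2+c)$ for a dimensional constant.

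One simplification worth noting: you do not need the $\sqrt{a-b}\leqslant\sqrt a-\tfrac{b}{2\sqrt a}$ trick to extract gain from the cross term. In the Shiohama--Xu form
\[
\tmop{Ric}(X)\geqslant \frac{n-1}{n}\Bigl(nc+\tfrac{2}{n}H^2-|h|^2-\tfrac{n-2}{\sqrt{n(n-1)}}|H|\,\normho\Bigr),
\]
it suffices to replace $|h|^2$ by $\gamma-\varepsilon\omega$ (keeping the explicit $\varepsilon\omega$) and to crudely bound $\normho\leqslant\sqrt{\mathring\gamma}$ in the cross term. Lemma~\ref{app}(vi) then absorbs \emph{all} of the non-$\varepsilon$ part at once, leaving $\tmop{Ric}(X)\geqslant\frac{n-1}{n}\varepsilon\,\omega$ directly. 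This is exactly what the paper does, in four lines; your extraction of additional gain from the cross term is correct but unnecessary.
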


\begin{proof}
  Using Proposition 2 of {\cite{MR1458750}} and Lemma \ref{app} (vi), we have
  \begin{eqnarray*}
    \tmop{Ric} ( X ) & \geqslant & \frac{n-1}{n} \left( n c+ \frac{2}{n} H^{2}
    - | h |^{2} - \frac{n-2}{\sqrt{n ( n-1 )}} | H | \normho \right)\\
    & > & \frac{n-1}{n} \left( n c+ \frac{2}{n} H^{2} - ( \gamma -
    \varepsilon   \omega ) - \frac{n-2}{\sqrt{n ( n-1 )}} | H |
    \sqrt{\mathring{\gamma}} \right)\\
    & \geqslant & \frac{n-1}{n}   \varepsilon   \omega\\
    & > & B_{4}   \varepsilon   ( H^{2} +c ) .
  \end{eqnarray*}

\end{proof}

We also need the well-known Myers theorem.

\begin{theorem}
  [\textbf{Myers Theorem}] Let $\Gamma$ be a geodesic of length at least
  $\pi / \sqrt{k}$ in $M$. If the Ricci curvature satisfies $\tmop{Ric} ( X )
  \geqslant ( n-1 ) k$, for each unit vector $X \in T_{x}
  M$, at any point $x \in \Gamma$, then $\Gamma$ has conjugate points.
\end{theorem}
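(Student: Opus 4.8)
The plan is to run the classical second–variation (index form) argument, exactly as in the proof of the Bonnet--Myers theorem. Parametrize $\Gamma$ by arc length as $\gamma:[0,L]\to M$ with $L\geqslant\pi/\sqrt{k}$, so that $\pi^{2}/L^{2}\leqslant k$. Since $n\geqslant2$ one can pick a parallel orthonormal frame $e_{1},\dots,e_{n-1}$ along $\gamma$ with each $e_{i}$ orthogonal to $\gamma'$, and for $1\leqslant i\leqslant n-1$ set the (proper, normal) variation field $V_{i}(t)=\sin(\pi t/L)\,e_{i}(t)$. These fields are smooth along $\gamma$, vanish at the endpoints $t=0$ and $t=L$, and satisfy $V_{i}'=\nabla_{\gamma'}V_{i}=(\pi/L)\cos(\pi t/L)\,e_{i}$.

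First I would compute the index form of $\gamma$ (the second variation of energy for fixed endpoints) on each $V_{i}$. Using $|\gamma'|=1$, $e_{i}\perp\gamma'$, and writing $K(e_{i},\gamma')$ for the sectional curvature of the plane spanned by $e_{i}$ and $\gamma'$,
\[
 I(V_{i},V_{i})=\int_{0}^{L}\bigl(|V_{i}'|^{2}-\langle R(V_{i},\gamma')\gamma',V_{i}\rangle\bigr)\,\mathd t
 =\int_{0}^{L}\Bigl(\tfrac{\pi^{2}}{L^{2}}\cos^{2}\tfrac{\pi t}{L}-K(e_{i},\gamma')\sin^{2}\tfrac{\pi t}{L}\Bigr)\,\mathd t .
\]
Summing over $i$ and using $\sum_{i=1}^{n-1}K(e_{i},\gamma')=\tmop{Ric}(\gamma')\geqslant(n-1)k$ along $\Gamma$ together with $\pi^{2}/L^{2}\leqslant k$, this gives
\[
 \sum_{i=1}^{n-1}I(V_{i},V_{i})\leqslant(n-1)k\int_{0}^{L}\Bigl(\cos^{2}\tfrac{\pi t}{L}-\sin^{2}\tfrac{\pi t}{L}\Bigr)\,\mathd t
 =(n-1)k\int_{0}^{L}\cos\tfrac{2\pi t}{L}\,\mathd t=0 .
\]
Hence at least one field, say $V=V_{i_{0}}$, satisfies $I(V,V)\leqslant0$ with $V\not\equiv0$ and $V(0)=V(L)=0$.

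To conclude I would invoke the index lemma from Morse theory: were there no point conjugate to $\gamma(0)$ along $\gamma|_{(0,L]}$, the index form $I$ would be positive definite on the space of piecewise smooth vector fields along $\gamma$ vanishing at both endpoints, contradicting $I(V,V)\leqslant0$ with $V\neq0$. Therefore $\gamma$ carries a point conjugate to $\gamma(0)$ in $(0,L]$, that is, $\Gamma$ has conjugate points, as claimed.

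The one step that deserves care is the borderline case $L=\pi/\sqrt{k}$ — and, more generally, equality in the Ricci lower bound along $\Gamma$ — where the summed index form vanishes identically rather than being strictly negative, so a crude ``$\Gamma$ fails to minimize'' argument is not enough. This is precisely why the conclusion is phrased in terms of conjugate points and why the index lemma (equivalently, the Morse index theorem, which I would regard as the only external ingredient) is the right final tool; whenever one of the two hypotheses is strict the summed index form is negative and the existence of a conjugate point in the open interval is then immediate.
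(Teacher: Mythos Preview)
Your proof is correct and is the standard second--variation (Bonnet--Myers) argument. Note, however, that the paper does not give its own proof of this statement: Myers' Theorem is quoted there as a classical result and used as a black box in the proofs of Theorems~\ref{Tfin} and~\ref{Tinf}, so there is nothing in the paper to compare your argument against.
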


Now we show that $M_{t}$ converges to a round point or a totally geodesic
sphere.

\begin{theorem}
  \label{Tfin}If $T$ is finite, then $F_{t}$ converges to a round point as $t
  \rightarrow T$.
\end{theorem}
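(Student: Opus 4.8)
The plan is to follow Huisken's original strategy from \cite{MR892052}, now that Theorems \ref{sa0h2} and \ref{dH2} have been established with the sharp pinching constant $\gamma$ in place of $\tfrac{1}{n-1}H^2+2c$. First I would exploit the finiteness of $T$ together with the evolution equation $\dt H = \Delta H + H(|h|^2 + nc)$ (Lemma \ref{evo}(iii)): since the flow exists only on $[0,T)$ with $T<\infty$, the second fundamental form must blow up, and the parabolic maximum principle applied to $H$ (or to $|h|^2$, using Lemma \ref{evo}(iv)) shows $H_{\max}(t)\to\infty$ as $t\to T$, with a definite rate. More precisely, using $\normho^2 < \mathring\gamma$ one controls $|h|^2$ by $H^2$ up to constants, so $H_{\min}$ and $H_{\max}$ blow up simultaneously and the rescaled quantities stay comparable.

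Next I would use the pinching estimate of Theorem \ref{sa0h2}, namely $\normho^2 \le C_0 (H^2+c)^{1-\sigma}\mathe^{-2\sigma c t}$, to show that the hypersurface becomes totally umbilic in a scale-invariant sense: after rescaling so that $H_{\max}=1$, the traceless part $\normho^2/H^2$ tends to $0$ as $t\to T$, at a rate that is a negative power of $H_{\max}$. Combined with the gradient estimate of Theorem \ref{dH2}, which after rescaling gives $|\nabla H|/H^2 \to 0$, this forces $H$ to become constant over $M_t$ in the limit: the ratio $H_{\max}/H_{\min}\to 1$. Here I would run Huisken's Stampacchia-type iteration argument, or the normalized-flow argument, to turn the integral/pointwise decay into uniform convergence of the rescaled metrics. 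The lower Ricci bound $\tmop{Ric}(X)\ge B_4\varepsilon(H^2+c)$ from Lemma \ref{ric}, together with Myers' theorem, controls the intrinsic diameter: $\tmop{diam}(M_t)\le \pi/\sqrt{B_4\varepsilon(H^2_{\min}+c)/(n-1)}\to 0$, so $M_t$ shrinks to a point $p\in\mathbb{F}^{n+1}(c)$.

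Then I would carry out the standard rescaling: define $\tilde F_t$ by dilating around $p$ so that, say, the maximum of $|h|$ equals $1$, and pass to a limit as $t\to T$. The preserved pinching $\normho^2/H^2\to 0$, the vanishing rescaled gradient, and the diameter bound together show the rescaled hypersurfaces converge smoothly (subsequentially, then fully, by uniqueness) to a compact totally umbilic hypersurface of constant mean curvature in $\mathbb{R}^{n+1}$ — hence to a round sphere $\mathbb{S}^n$. A short-time-existence / continuous-dependence argument upgrades subsequential convergence to convergence as $t\to T$, which is exactly the statement that $F_t$ converges to a round point.

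The main obstacle is the same as in Huisken's paper: converting the \emph{averaged} decay of $\normho$ and $|\nabla H|$ (Theorems \ref{sa0h2}, \ref{dH2} are pointwise here, which helps) into \emph{uniform $C^\infty$ convergence of the rescaled hypersurfaces to the round sphere}. This requires bootstrapping higher-order derivative estimates for the rescaled flow — interpolation inequalities for $|\nabla^m h|$ à la Huisken, plus the observation that the rescaled flow stays in a region where all the pinching estimates persist — and verifying that the limit is connected and embedded. The ambient curvature $c$ enters only through lower-order terms in all the evolution equations and is harmless in the blow-up limit, since the relevant scale $H_{\max}^{-1}\to 0$; so the delicate point is purely the regularity/compactness of the rescaled family, for which the estimates assembled in Sections 2–4 are precisely what is needed.
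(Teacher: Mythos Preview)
Your outline is the classical Huisken route (rescale, show the blow-up limit is a round sphere, upgrade to $C^\infty$ convergence), and it would work with the estimates in Sections~2--4. The paper, however, takes a much shorter shortcut: it does \emph{not} carry out the rescaling/compactness argument at all. Instead it argues that at some time $\tau$ close to $T$ the hypersurface $M_\tau$ is \emph{strictly convex}, and then simply invokes Theorem~1.1 of Huisken's 1986 paper \cite{MR837523} on mean curvature flow of convex hypersurfaces in Riemannian manifolds.

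Concretely, the paper's argument is: from $|\nabla H|<(\eta H)^2+C(\eta)$ (Theorem~\ref{dH2}) and the blow-up of $|H|_{\max}$ (Theorem~7.1 of \cite{MR837523}), pick $\tau$ with $|H|_{\max}^2>C(\eta)/\eta^2$, so $|\nabla H|<2\eta^2|H|_{\max}^2$ on $M_\tau$. Along geodesics of length $l=(2\eta|H|_{\max})^{-1}$ from a maximum point one has $|H|>(1-\eta)|H|_{\max}$; for $\eta$ small the Ricci lower bound of Lemma~\ref{ric} makes $\mathrm{Ric}>(n-1)\pi^2/l^2$ on these geodesics, so by Myers' theorem they cover all of $M_\tau$. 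Hence $|H|>(1-\eta)|H|_{\max}$ everywhere, and then Theorem~\ref{sa0h2} gives $\lambda_i\ge H/n-\normho>0$, i.e.\ convexity. The black-box \cite{MR837523} finishes the proof in one line.

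What each approach buys: the paper's route is dramatically shorter and avoids all the higher-order interpolation, blow-up compactness, and uniqueness-of-limit issues you flagged as the ``main obstacle''; the price is relying on a nontrivial external theorem. Your route is self-contained and is exactly what Huisken did in \cite{MR772132,MR892052}, but it is substantially more work. One small organizational point in your sketch: the claim that $H_{\min}$ and $H_{\max}$ blow up \emph{simultaneously} does not follow from the pinching $|h|^2<\gamma$ alone --- it needs precisely the gradient estimate plus Myers argument you invoke later, so that step should be deferred rather than asserted up front.
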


\begin{proof}
  Let $| H |_{\min} = \min_{M_{t}} | H |$, $| H |_{\max} = \max_{M_{t}} | H
  |$. By Theorem \ref{dH2}, for any $\eta \in \left( 0, \frac{1}{n} \right)$,
  there exists $C ( \eta ) >1$ such that $| \nabla H | < ( \eta  H )^{2} +C (
  \eta )$. From Theorem 7.1 of {\cite{MR837523}}, $| H |_{\max}$ becomes
  unbounded as $t \rightarrow T$. So, there exists a time $\tau$ depending on
  $\eta$, such that $| H |_{\max}^{2} >C ( \eta ) / \eta^{2}$ on $M_{\tau}$.
  Then we have $| \nabla H | <2  \eta^{2} | H |^{2}_{\max}$ on $M_{\tau}$.

  Let $x$ be a point in $M_{\tau}$ where $| H |$ achieves its maximum. Then
  along all geodesics of length $l= ( 2  \eta   | H |_{\max} )^{-1}$ starting
  from $x$, we have $| H | > | H |_{\max} -2\eta^{2}|H|^{2}_{\max} \cdot l= ( 1- \eta )
  | H |_{\max}$. With $\eta$ small enough, Lemma \ref{ric} implies $\tmop{Ric}
  >B_{4} \varepsilon ( 1- \eta )^{2} | H |^{2}_{\max} > ( n-1 ) \pi^{2}
  /l^{2}$ on these geodesics. Then by Myers' theorem, these geodesics can
  reach any point of $M_{\tau}$.

  Then we have $| H | > ( 1- \eta ) | H |_{\max} > \frac{1}{2 \eta}$ on
  $M_{\tau}$. Thus we can assume $H> \frac{1}{2 \eta}$ on $M_{\tau}$ without
  loss of generality. Let $\eta$ be sufficiently small. From Theorem
  \ref{sa0h2}, at any point in $M_{\tau}$, the principal curvatures
  $\lambda_{i} \geqslant \frac{H}{n} - \normho >0$, $1 \leqslant i \leqslant n$. Hence $M_{\tau}$ is a
  convex hypersurface. By Theorem 1.1 of {\cite{MR837523}}, $F_{t}$ will
  converge to a round point (see \cite{Zhu} Chapter 11 for the details).
\end{proof}

\begin{theorem}
  \label{Tinf}If $T= \infty$, then $F_{t}$ converges to a totally geodesic
  hypersurface as $t \rightarrow \infty$.
\end{theorem}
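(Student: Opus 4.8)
The plan is to show that if the flow exists for all time $t \in [0,\infty)$, then the hypersurface degenerates to a totally geodesic sphere. First I would observe that by Theorem 7.1 of \cite{MR837523}, for a smooth immortal solution the quantity $|H|_{\max}(t)$ stays bounded; combining this with Theorem \ref{sa0h2}, which gives $\normho^{2} \leqslant C_{0}(H^{2}+c)^{1-\sigma}\mathe^{-2\sigma c t}$, we get $\normho^{2} \to 0$ uniformly as $t \to \infty$. Hence the second fundamental form becomes arbitrarily close to $\tfrac{H}{n}g_{ij}$, i.e.\ $M_t$ is asymptotically umbilical. In particular, by Lemma \ref{ric}, the Ricci curvature satisfies $\tmop{Ric} \geqslant B_{4}\varepsilon(H^{2}+c) \geqslant B_{4}\varepsilon c > 0$, so every $M_t$ has a uniform diameter bound by Myers' theorem, and by Theorem \ref{dH2} the oscillation of $H$ on $M_t$ is controlled: $|\nabla H| \leqslant ((\eta H)^{2}+C(\eta))\mathe^{-\sigma c t/2}$, which decays since $H$ is bounded.

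Next I would pin down the limiting value of $H$. Using the bounded diameter and the decaying gradient estimate, $H$ becomes spatially constant in the limit, say $H(\cdot,t) \to H_\infty(t)$, and the evolution equation $\dt H = \Delta H + H(|h|^{2}+nc)$ together with $|h|^{2} = \normho^{2} + H^{2}/n \to H_\infty^{2}/n$ yields, in the limit, the ODE $\dot H_\infty = H_\infty(H_\infty^{2}/n + nc)$. If $H_\infty \neq 0$ at some time, then $|H_\infty|$ is strictly increasing and in fact blows up in finite time (the right side grows at least like $nc\,H_\infty$), contradicting $T=\infty$; one must argue this carefully using the two-sided comparison coming from $|\nabla H|\to 0$ and $\normho\to 0$ rather than just the formal limit ODE. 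Therefore $H \to 0$ uniformly, and consequently $|h|^{2} = \normho^{2}+H^{2}/n \to 0$: the hypersurfaces become totally geodesic in the limit.

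Finally, to upgrade "the second fundamental form tends to zero" to genuine smooth convergence to a fixed totally geodesic sphere, I would invoke interpolation/Bernstein-type interior estimates for all higher derivatives $|\nabla^{m} h|$ of the mean curvature flow (Ecker--Huisken style, as in \cite{MR837523}), which combined with the exponential decay of $|h|$ and of $|\nabla H|$ from Theorems \ref{sa0h2} and \ref{dH2} give exponential decay of all $|\nabla^{m} h|$. Then the flow $F(\cdot,t)$ is Cauchy in $C^{\infty}$ as $t\to\infty$ — the velocity $\vec H$ and all its derivatives decay exponentially, so $F_t$ converges to a limiting immersion $F_\infty$ whose image is a compact totally geodesic hypersurface of $\mathbb{F}^{n+1}(c)$, i.e.\ a totally geodesic sphere.

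The main obstacle I anticipate is the middle step: ruling out $H_\infty \neq 0$ rigorously. The formal limiting ODE argument is suggestive but one needs genuine differential inequalities valid along the flow for, say, $H_{\min}(t)$ and $H_{\max}(t)$ — using that their difference is exponentially small by Theorem \ref{dH2} — to show that if $H$ does not decay to zero then the flow cannot be immortal. One also has to handle the bookkeeping of the various exponential rates $\sigma c$ carefully so that the decay of $\normho$ dominates and the ODE comparison is not spoiled by error terms. Once $H\to 0$ is established, the remaining convergence is standard parabolic regularity.
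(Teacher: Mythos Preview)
Your overall architecture is right, but two steps can be sharpened, and one of them is actually a gap as stated.

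First, your appeal to Theorem~7.1 of \cite{MR837523} for the boundedness of $|H|_{\max}$ is backwards: that theorem says that if $T<\infty$ then $|h|^2$ (hence $|H|$) blows up, not that an immortal solution has bounded curvature. The paper closes this by reusing the proof of Theorem~\ref{Tfin}: if $|H|_{\max}$ ever becomes large, that proof shows $M_\tau$ is convex and then the flow contracts to a round point in \emph{finite} time, contradicting $T=\infty$. So boundedness of $H$ is obtained indirectly from the convexity/round-point mechanism, not from a general long-time existence criterion.

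Second, your ``limiting ODE for $H_\infty$'' step is unnecessarily delicate. The paper bypasses it with a one-line maximum-principle argument: from $\partial_t H = \Delta H + H(|h|^2+nc)$ one gets $\partial_t |H| > \Delta |H| + \tfrac{1}{n}|H|^3$ wherever $H\neq 0$, so if $|H|_{\min}>0$ at any time $\theta$ then comparison with the ODE $\phi' = \tfrac{1}{n}\phi^3$ forces finite-time blowup, again contradicting $T=\infty$. Hence $|H|_{\min}(t)=0$ for \emph{every} $t$. Now combine the uniform diameter bound (Lemma~\ref{ric} + Myers) with Theorem~\ref{dH2}; since $H$ is bounded, $|\nabla H| \leqslant C'\,\mathe^{-\sigma c t/2}$, and integrating from a point where $H=0$ gives $|H| < C\,\mathe^{-\sigma c t/2}$ on all of $M_t$. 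This replaces your oscillation/limiting-ODE argument entirely and yields the exponential decay of $|h|^2$ directly. The remaining smooth convergence via higher-derivative bounds is as you describe.
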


\begin{proof}
  Firstly we prove that $| H |_{\max}$ must remains bounded for all $t \in [
  0, \infty )$. If not, we see that $F_{t}$ will converge to a round point in
  finite time from the proof of Theorem \ref{Tfin}.

  Next we prove $| H |_{\min} =0$ for all $t \in [ 0, \infty )$. Suppose $| H
  |_{\min} >0$ on $M_{\theta}$. From the evolution equation of $H$, we have
  $\dt | H | > \Delta | H | + \frac{1}{n} | H |^{3}$ for $t \geqslant \theta$.
  By the maximum principle, we get that $| H |$ will tend to infinity in
  finite time. This leads to a contradiction.

  By Lemma \ref{ric} and Myers' theorem, $\tmop{diam}  M_{t}$ is uniformly
  bounded. Applying Theorem \ref{dH2}, we have $| H |
  <C \mathe^{- \frac{\sigma c t}{2}}$. Therefore, from
  Theorem \ref{sa0h2} we obtain $| h |^{2} = \normho^{2} + \frac{H^{2}}{n}
  \leqslant C^{2} \mathe^{- \sigma c t}$.

  By Lemma 7.2 of {\cite{MR837523}}, $| \nabla^{m} h |$ is bounded for all $m
  \in \mathbb{N}$ and $t \in [ 0,+ \infty )$. Then $M_{t}$ converges to a
  smooth limit hypersurface $M_{\infty}$. Since $| h | \rightarrow 0$ as $t
  \rightarrow \infty$, $M_{\infty}$ is totally geodesic.
\end{proof}

\noindent\textit{Proof of Theorem \ref{theo1}.}
Combining Theorem \ref{Tfin} and \ref{Tinf}, we complete the
proof of Theorem \ref{theo1}.
\hfill\qedsymbol\\

\section{Convergence Under Weakly Pinching Condition}

Now we are in the position to prove Theorem \ref{theo2}. Assume that $M_{0}$
is a closed hypersurface immersed in $\mathbb{S}^{n+1} \left( 1/ \sqrt{c}
\right)$, and $M_{0}$ satisfies $| h |^{2} \leqslant \gamma$.

\noindent\textit{Proof of Theorem \ref{theo2}.}
  Recall the proof of Theorem \ref{pinch}. Letting $\varepsilon =0$, we get
  \begin{eqnarray}
    \left( \dt - \Delta \right) ( | h |^{2} - \gamma ) & \leqslant & \left[ -
    \frac{6}{n+2} +2 ( 2H^{2} \gamma'' + \gamma' ) \right] | \nabla H |^{2}
    \nonumber\\
    &  & +2 ( | h |^{2} - \gamma )^{2} +2 ( | h |^{2} - \gamma ) ( 2 \gamma
    -H^{2} \gamma' -n c )  \label{weakpinch}\\
    &  & +4 c H^{2} +2  \gamma^{2} -2 n c  \gamma -2 ( \gamma +n c ) H^{2}
    \gamma' . \nonumber
  \end{eqnarray}
  By Lemma \ref{app} (i) and (ii), we get $\left( \dt - \Delta \right) ( | h |^{2}
  - \gamma ) \leqslant 2 ( | h |^{2} - \gamma ) ( | h |^{2} + \gamma -H^{2}
  \gamma' -n c )$. Then using the strong maximum principle, we have either $|
  h |^{2} < \gamma$ at some time $t_{0} \in ( 0,T )$, or $| h |^{2} \equiv \gamma$ for all $t
  \in [ 0,T )$.

  If $| h |^{2} < \gamma$ at some $t_{0}$, it reduces to the case of Theorem
  \ref{theo1}.

  If $| h |^{2} \equiv \gamma$ for $t \in [ 0,T )$, we have $\left[ -
  \frac{6}{n+2} +2 ( 2H^{2} \gamma'' + \gamma' ) \right] | \nabla H |^{2}
  \equiv 0$ and $ 4 c H^{2} +2  \gamma^{2} -2 n c  \gamma -2 ( \gamma +n c )
  H^{2} \gamma'   \equiv  0$ for all $t$. Thus we obtain $\nabla H=0$ and
  $H^{2} \geqslant x_{0}$. By Theorem B, $M_{t}$ is the isoparametric
  hypersurface
  \[ \mathbb{S}^{n-1} \left( \frac{1}{\sqrt{c+ \lambda^{2}}} \right) \times
     \mathbb{S}^{1} \left( \frac{\lambda}{\sqrt{c^{2} +c  \lambda^{2}}}
     \right) , \]
  where $\lambda = \frac{| H | + \sqrt{ H ^{2} +4 ( n-1 )c}}{2 ( n-1 )}
    > \sqrt{\frac{c}{n-1}}$.

  We see that $\lambda$ is the ($n-1$)-multiple principal curvature and $-
  \frac{c}{\lambda}$ is the other one. Thus we have $| H | = ( n-1 ) \lambda -
  \frac{c}{\lambda}$ and $| h |^{2} = ( n-1 ) \lambda^{2} +
  \frac{c^{2}}{\lambda^{2}}$. Substituting these equalities into the evolution equation of
  $H$, we get
  \[ \frac{\mathd}{\mathd t} \left( ( n-1 ) \lambda - \frac{c}{\lambda}
     \right) = \left( ( n-1 ) \lambda - \frac{c}{\lambda} \right) \left( ( n-1
     ) \lambda^{2} + \frac{c^{2}}{\lambda^{2}} +n c \right) . \]
  Let $r_{1} = \frac{1}{\sqrt{c+ \lambda^{2}}}$, $r_{2} =
  \frac{\lambda}{\sqrt{c^{2} +c  \lambda^{2}}}$. The above equation implies
  \[ \frac{\mathd}{\mathd t} r_{1}^{2} =2-2n+2n c r_{1}^{2} . \]
  Solving the ODE above, we obtain
  \[ r_{1}^{2} = \frac{n-1}{n c} ( 1-d \cdot \mathe^{2n c t} ) , \]
  where $d \in ( 0,1 )$ is a constant of integration.

  It's seen from the solution of $r_{1}$ that the maximal existence time $T=- \frac{\log
  d}{2n c}$. Hence we obtain $r_{1}^{2} = \frac{n-1}{n c} ( 1- \mathe^{2n
  c ( t-T )} )$. We see that $M_{t}$ converges to a great circle
  $\mathbb{S}^{1} \left( 1/ \sqrt{c} \right)$ as $t \rightarrow T$. This
  completes the proof of Theorem \ref{theo2}.
\hfill\qedsymbol\\

Motivated by Theorem B and Theorem \ref{theo2}, we propose the
following conjecture for the mean curvature flow in a sphere.

\begin{conjecture}
  Let $F_{0} :M^{n} \rightarrow \mathbb{S}^{n+1} \left( 1/ \sqrt{c} \right)$
  be an n-dimensional closed hypersurface immersed in a
  sphere. If $F_{0}$ satisfies $| h |^{2} \leqslant \alpha ( n,H,c )$, then
  the mean curvature flow with initial value $F_{0}$ has a unique smooth
  solution $F: M \times [ 0,T ) \rightarrow \mathbb{S}^{n+1} \left( 1/
  \sqrt{c} \right)$, and either
  \begin{enumerateroman}
    \item $T$ is finite, and $F_{t}$ converges to a round point as $t
    \rightarrow T$,

    \item $T= \infty$, and $F_{t}$ converges to a totally geodesic sphere as
    $t \rightarrow \infty$, or

    \item $T$ is finite, $M_{t}$ is congruent to $\mathbb{S}^{n-1} (
    r_{1} ( t ) ) \times \mathbb{S}^{1} ( r_{2} ( t ) )$, where $r_{1} ( t
    )^{2} +r_{2} ( t )^{2} =1/c$, $r_{1} ( t )^{2} = \frac{n-1}{n c} ( 1-
    \mathe^{2n c ( t-T )} )$, and $F_{t}$ converges to a great circle as $t
    \rightarrow T$,

    \item $T= \infty$, and $M_{t}$ is congruent to one of the minimal
    hypersurfaces $\mathbb{S}^{k} \left( \sqrt{\frac{k}{n c}} \right) \times
    \mathbb{S}^{n-k} \left( \sqrt{\frac{n-k}{n c}} \right)$, $k=1, \cdots
    ,n-1$.
  \end{enumerateroman}
\end{conjecture}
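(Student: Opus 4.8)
The plan is to re-run the arguments of the preceding sections with the conjecturally optimal constant $\alpha(n,H,c)$ in place of the sharp constant $\gamma(n,H,c)$, reducing to Theorem \ref{theo1} whenever the pinching can be made strict and invoking the rigidity Theorem B in the equality case. Because $\gamma\equiv\alpha$ on $\{H^2\geqslant x_0\}$, on that part of the flow the computation in the proof of Theorem \ref{theo2} carries over verbatim: using the Codazzi inequality $|\nabla h|^2\geqslant\tfrac{3}{n+2}|\nabla H|^2$ (Lemma \ref{dA2}(i)), one gets that $(\partial_t-\Delta)(|h|^2-\alpha)$ is dominated by $2(|h|^2-\alpha)\bigl(|h|^2+\alpha-H^2\alpha'-nc\bigr)$ together with the gradient term $\bigl[-\tfrac{6}{n+2}+2(2H^2\alpha''+\alpha')\bigr]|\nabla H|^2$ and a residual $4cH^2+2\alpha^2-2nc\alpha-2(\alpha+nc)H^2\alpha'$ that vanishes identically by the identity (\ref{alid1}). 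Since $2H^2\alpha''+\alpha'\leqslant\tfrac{3}{n+2}$ there (Lemma \ref{app}(i)), the gradient coefficient is nonpositive, the strong maximum principle applies exactly as in Theorem \ref{theo2}, and if $|h|^2$ ever drops strictly below $\alpha$ it falls below $\gamma$ shortly afterwards, so Theorem \ref{theo1} produces alternative (i) or (ii).

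In the remaining (rigid) case $|h|^2\equiv\alpha$ for all time, vanishing of the gradient term forces $\nabla H\equiv0$, so $M_t$ has constant mean curvature with $|h|^2=\alpha(n,H,c)$. Then Theorem B applies: $M_t$ cannot be totally umbilical, since $\alpha(x)>x/n$ by Lemma \ref{app}(v); hence it is either the isoparametric hypersurface $\mathbb{S}^{n-1}(1/\sqrt{c+\lambda^2})\times\mathbb{S}^1(\lambda/\sqrt{c^2+c\lambda^2})$ with $\lambda=\tfrac{|H|+\sqrt{H^2+4(n-1)c}}{2(n-1)}$, or a minimal Clifford torus $\mathbb{S}^k(\sqrt{k/(nc)})\times\mathbb{S}^{n-k}(\sqrt{(n-k)/(nc)})$. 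If $H\neq0$ the hypersurface is of the first type, and substituting $|H|=(n-1)\lambda-c/\lambda$, $|h|^2=(n-1)\lambda^2+c^2/\lambda^2$ into the evolution equation of $H$ and solving the resulting ODE for $r_1^2=(c+\lambda^2)^{-1}$, exactly as in the proof of Theorem \ref{theo2}, yields alternative (iii). If $H\equiv0$ then $|h|^2=nc$ and $M_0$ is a minimal Clifford torus; since $\vec{H}\equiv0$ is preserved by the flow, $M_t\equiv M_0$ for all time, which is alternative (iv).

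The main obstacle lies entirely in the region $\{H^2<x_0\}$, which is precisely where $\gamma$ was designed to differ from $\alpha$ (it is replaced there by its osculating parabola $\beta$). On that region $2H^2\alpha''+\alpha'>\tfrac{3}{n+2}$ (as the proof of Lemma \ref{app}(i) shows), so the gradient coefficient above can be positive; the Codazzi inequality $|\nabla h|^2\geqslant\tfrac{3}{n+2}|\nabla H|^2$ is then too weak to absorb the $|\nabla H|^2$ term, and neither preservation of the $\alpha$-pinching (a Theorem \ref{pinch}-type statement) nor the strong-maximum-principle dichotomy above can be obtained by these means. Closing the conjecture would require, for hypersurfaces with $|h|^2$ close to $\alpha$ and $H^2$ small, a genuinely stronger input — for example a refinement of $|\nabla h|^2\geqslant\tfrac{3}{n+2}|\nabla H|^2$ that improves as $|\mathring{h}|$ increases, a tensor maximum principle applied to $\mathring{h}_{ij}$ compared against a suitable function of $H$, or a quantitative rigidity theorem showing that such configurations are $C^1$-close to a minimal Clifford torus and are then attracted to it. Producing any of these, and in particular ruling out loss of pinching in the small-mean-curvature regime, is the crux that the techniques of this paper do not reach.
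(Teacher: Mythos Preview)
The statement you were given is a \emph{conjecture} in the paper (Conjecture F, stated at the very end), not a theorem; the paper offers no proof of it and explicitly presents it as an open problem motivated by Theorem B and Theorem \ref{theo2}. So there is no ``paper's own proof'' to compare your attempt against.

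That said, your analysis is on target. You correctly observe that on the range $H^{2}\geqslant x_{0}$ the function $\gamma$ coincides with $\alpha$, so the entire machinery of Sections 2--6 (and in particular the proof of Theorem \ref{theo2}) goes through verbatim and yields alternatives (i)--(iii), while the additional alternative (iv) arises from the $H\equiv 0$ branch of Theorem B. You also correctly pinpoint the obstruction: on $\{H^{2}<x_{0}\}$ one has $2H^{2}\alpha''+\alpha'>\tfrac{3}{n+2}$, so the Codazzi inequality of Lemma \ref{dA2}(i) no longer absorbs the gradient term in the evolution of $|h|^{2}-\alpha$, and neither the preservation argument of Theorem \ref{pinch} nor the strong-maximum-principle dichotomy of Theorem \ref{theo2} can be pushed through. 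This is exactly why the paper replaces $\alpha$ by its second-order Taylor expansion $\beta$ on that region to define $\gamma$, and why the full $\alpha$-pinching statement remains conjectural. Your suggested avenues (a sharpened gradient inequality, a tensor maximum principle, or quantitative rigidity near the Clifford tori) are reasonable directions, but none is supplied by the paper.
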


\end{document}